\documentclass{amsart}

\usepackage{graphicx} %
\usepackage{hyperref}
\usepackage{amsmath}
\usepackage{amssymb}
\usepackage{mathtools}
\usepackage{xcolor}
\usepackage{cleveref}
\usepackage{amsthm}
\usepackage{url}
\usepackage{hyperref}

\definecolor{linkred}{rgb}{0.5,0,0}
\hypersetup{colorlinks=true, linkcolor=linkred, citecolor=linkred, urlcolor=linkred}
\usepackage{xspace}
\usepackage{bm}
\usepackage{bbm}
\usepackage{standalone}
\usepackage{tikz} %
\usetikzlibrary{arrows.meta}
\usepackage{subcaption}

\newcommand{\includeexternalstandalonewithpath}[2][]{%
    \includegraphics[#1]{figures/#2.pdf}%
}

\newtheorem{theorem}{Theorem}[section]
\newtheorem{lemma}[theorem]{Lemma}
\newtheorem{corollary}[theorem]{Corollary}

\theoremstyle{definition}

\newtheorem{assumption}[theorem]{Assumption}

\theoremstyle{remark}
\newtheorem{remark}[theorem]{Remark}

\numberwithin{equation}{section}

\newcommand{\CN}{Crank--Nicolson\xspace}

\def\R{\mathbb{R}}

\def\calM{\mathcal{M}}

\def\calZ{\mathcal{Z}}
\def\eps{\varepsilon}

\def\ubold{\boldsymbol{u}}
\def\vbold{\boldsymbol{v}}
\def\wbold{\boldsymbol{w}}
\def\sigmabold{\boldsymbol{\sigma}}
\def\lambdabold{\boldsymbol{\lambda}}

\def\vboldh{\boldsymbol{v}_h}
\def\wboldh{\boldsymbol{w}_h}
\def\sigmaboldh{\boldsymbol{\sigma}_h}
\def\lambdaboldh{\boldsymbol{\lambda}_h}

\newcommand{\fullspace}{\calZ}

\newcommand{\wavevar}{x}
\newcommand{\wavevarOne}{\wavevar_1}
\newcommand{\wavevarTwo}{\wavevar_2}
\newcommand{\wavetestvar}{y}

\newcommand{\wavevarOnepar}{\wavevar_{1,\parallel}}
\newcommand{\wavevarOneparE}{E_\parallel}

\newcommand{\wavevarn}[1]{\wavevar^{#1}}
\newcommand{\wavevarnOne}[1]{\wavevarOne^{#1}}
\newcommand{\wavevarnTwo}[1]{\wavevarTwo^{#1}}

\newcommand{\wavevarh}{\wavevar_h}
\newcommand{\wavevarhOne}{\wavevar_{h,1}}
\newcommand{\wavevarhTwo}{\wavevar_{h,2}}
\newcommand{\wavetestvarh}{\wavetestvar_h}

\newcommand{\wavevarhn}[1]{\wavevarh^{#1}}
\newcommand{\wavevarhnOne}[1]{\wavevar_{h,1}^{#1}}
\newcommand{\wavevarhnTwo}[1]{\wavevar_{h,2}^{#1}}
\newcommand{\wavevarhOneDotStar}[1]{\dot{\wavevar}_{1,h}^{\star,#1}}

\newcommand{\waveerrh}{e_h}
\newcommand{\waveerrhOne}{e_{h,1}}
\newcommand{\waveerrhTwo}{e_{h,2}}
\newcommand{\waveerrhn}[1]{\waveerrh^{#1}}
\newcommand{\waveerrhnOne}[1]{\waveerrhOne^{#1}}
\newcommand{\waveerrhnTwo}[1]{\waveerrhTwo^{#1}}

\newcommand{\waveop}{S}
\newcommand{\waveopOne}{\waveop_1}
\newcommand{\waveopTwo}{\waveop_2}

\newcommand{\waveoph}{\waveop_h}
\newcommand{\waveophskew}{\waveop_h^s}
\newcommand{\waveophOne}{\waveop_{h,1}}
\newcommand{\waveophTwo}{\waveop_{h,2}}

\newcommand{\wavespace}{X}
\newcommand{\wavespaceOne}{X_1}
\newcommand{\wavespaceTwo}{X_2}
\newcommand{\wavespaceh}{\wavespace_h}
\newcommand{\wavespacehOne}{X_{h,1}}
\newcommand{\wavespacehTwo}{X_{h,2}}

\newcommand{\wavespaceV}{Y}

\newcommand{\wavespaceVTwo}{Y_2}
\newcommand{\wavespaceVh}{\wavespaceV_h}

\newcommand{\ipwave}{p}
\newcommand{\ipwaveOne}{p_1}
\newcommand{\ipwaveTwo}{p_2}
\newcommand{\normwave}[1]{\|#1\|_{\ipwave}}
\newcommand{\normwaveOne}[1]{\|#1\|_{\ipwaveOne}}
\newcommand{\normwaveTwo}[1]{\|#1\|_{\ipwaveTwo}}

\newcommand{\ipwaveh}{\ipwave_h}

\newcommand{\ipwavehTwo}{p_{2,h}}

\newcommand{\sbilwave}{s}
\newcommand{\sbilwaveOne}{\sbilwave_1}
\newcommand{\sbilwaveTwo}{\sbilwave_2}

\newcommand{\sbilwaveext}{\widehat{s}}

\newcommand{\sbilwaveh}{\sbilwave_h}
\newcommand{\sbilwavehskew}{\sbilwave^{s}_{h}}
\newcommand{\sbilwavehpos}{\sbilwave^{p}_h}
\newcommand{\sbilwavehposOne}{\sbilwave^{p,1}_h}
\newcommand{\sbilwavehposTwo}{\sbilwave^{p,2}_h}

\newcommand{\waveophpos}{\waveop_h^p}
\newcommand{\waveophposOne}{\waveop_{h,1}^p}
\newcommand{\waveophposTwo}{\waveop_{h,2}^p}

\newcommand{\sbilwavehOne}{\sbilwave_{h,1}}
\newcommand{\sbilwavehTwo}{\sbilwave_{h,2}}

\newcommand{\heatvar}{\theta}
\newcommand{\heatop}{L}
\newcommand{\heatoph}{\heatop_h}

\newcommand{\heatvarn}[1]{\heatvar^{#1}}

\newcommand{\heatvarh}{\heatvar_h}
\newcommand{\heatvarhn}[1]{\heatvar_h^{#1}}

\newcommand{\heaterrh}{\epsilon_h}
\newcommand{\heaterrhn}[1]{\heaterrh^{#1}}

\newcommand{\heatspace}{H}
\newcommand{\heatspaceV}{V}
\newcommand{\heatspaceh}{\heatspace_h}
\newcommand{\heatspaceVh}{\heatspaceV_h}

\newcommand{\ipheat}{\widetilde{p}}
\newcommand{\normheat}[1]{\|#1\|_{\ipheat}}
\newcommand{\sbilheat}{\widetilde{s}}

\newcommand{\ipheath}{\ipheat_h}
\newcommand{\sbilheath}{\sbilheat_h}

\newcommand{\coupopfull}{\widetilde{B}}
\newcommand{\coupop}{B}

\newcommand{\coupopform}{b}
\newcommand{\coupopfullform}{\widetilde{\coupopform}}

\newcommand{\coupopformh}{\coupopform_h}
\newcommand{\coupopfullformh}{\coupopfullform_h}
\newcommand{\coupoph}{\coupop_h}

\newcommand{\coupopfullformext}{\widehat{\coupopform}}

\newcommand{\ipcol}[2]{\langle #1, #2\rangle_{\ipwave\times\ipheat}}

\newcommand{\normcol}[1]{\|#1\|_{\ipwave \times \ipheat}}
\newcommand{\stabnormwave}[1]{|#1|_{\sbilwavehpos}}

\newcommand{\tauhalf}{\frac{\tau}{2}}
\newcommand{\tausqf}{\frac{\tau^2}{4}}
\newcommand{\nhalf}{n + 1/2}
\newcommand{\tauCFL}{\tau_{\operatorname{CFL}}}

\newcommand{\makeoverline}[1]{%
  \expandafter\makeoverlineA#1\relax
}
\def\makeoverlineA#1#2\relax{%
  \overline{#1}#2%
}
\newcommand{\tauavg}[1]{\makeoverline{#1}}

\newcommand{\dtau}{\partial_\tau}
\newcommand{\ptau}{\dtau}

\newcommand{\colvar}{\mathcal{U}}
\newcommand{\colvarh}{\colvar_h}
\newcommand{\colvarhn}[1]{\colvar_h^{#1}}
\newcommand{\colvarn}[1]{\colvar^{#1}}

\newcommand{\coltestvar}{\mathcal{V}}
\newcommand{\coltestvarh}{\coltestvar_h}
\newcommand{\coltestvarhn}[1]{\coltestvar_h^{#1}}
\newcommand{\coltestvarhVTwo}{\mathcal{W}_h}

\newcommand{\colop}{\bm{\mathcal{A}}}
\newcommand{\coloph}{\colop_h}

\newcommand{\colstaboph}{\bm{\mathcal{S}}_h}

\newcommand{\colpertoph}{\bm{\mathcal{D}}_h}
\newcommand{\colpertrhsoph}{\bm{\mathcal{G}}_h}

\newcommand{\colrhs}{\mathcal{F}}

\newcommand{\colrhshn}[1]{\colrhs_h^{#1}}
\newcommand{\colrhshnavg}[1]{\overline{\widehat{\colrhs}}_h^{#1}}

\newcommand{\colerrhn}[1]{\mathcal{E}_h^{#1}}
\newcommand{\colerrhnavg}[1]{\overline{\mathcal{E}}_h^{#1}}

\DeclarePairedDelimiterX\setc[2]{\{}{\}}{\,#1 \;\delimsize\vert\; #2\,}
\DeclarePairedDelimiter\monosetc{\{}{\}}

\renewcommand{\div}{\operatorname{div}}
\newcommand{\curl}{\operatorname{curl}}
\newcommand{\imp}{{\operatorname{imp}}}
\newcommand{\pcurl}{\widetilde{\curl}}

 \newcommand{\Sym}{\operatorname{Sym}}

\newcommand{\pt}{\partial_t}
\newcommand{\ip}[1]{\langle #1 \rangle}

\newcommand{\interpolation}{\mathcal{I}_h}

\newcommand{\projwave}{P_{h}}
\newcommand{\projheat}{\widetilde{P}_{h}}
\newcommand{\projwaveRef}{J_{h}}
\newcommand{\projheatRef}{\widetilde{J}_{h}}

\newcommand{\projfull}{\mathbf{P}_{h}}
\newcommand{\projReffull}{\mathbf{J}_{h}}

\newcommand{\projRitz}{\textup{R}_h}
\newcommand{\projLtwo}{\pi_h}

\newcommand{\avg}[1]{\{\hspace{-3.8pt}\{#1\}\hspace{-3.8pt}\}}
\newcommand{\jmp}[1]{[\![#1]\!]}
\newcommand{\Fint}{{F_{\operatorname{int}}}}
\newcommand{\nint}{n_{\operatorname{int}}}

\newcommand{\dGMesh}{\mathcal{T}_h}
\newcommand{\dGnF}{n_F}
\newcommand{\dGnKF}{n_{K,F}}

\newcommand{\dGtF}{\tau_F}
\newcommand{\dGtKF}{\tau_{K,F}}

\newcommand{\dGFaceK}{\mathcal{F}_K}
\newcommand{\dGFaceKOne}{\mathcal{F}_{K_1}}
\newcommand{\dGFaceKTwo}{\mathcal{F}_{K_2}}
\newcommand{\dGFace}{\mathcal{F}_h}
\newcommand{\dGFaceInt}{\mathcal{F}_h^{\operatorname{int}}}
\newcommand{\dGFaceBnd}{\mathcal{F}_h^{\operatorname{bnd}}}
\newcommand{\dGJmp}[1]{[\![#1]\!]}
\newcommand{\dGAvg}[1]{\{\!\{#1\}\!\}}
\newcommand{\LTwoK}{L^2(K)}
\newcommand{\LTwoF}{L^2(F)}

\newcommand\restr[2]{{          %
\left.\kern-\nulldelimiterspace %
#1                              %
\vphantom{\big|}                %
\right|_{#2}                    %
}}
\newcommand{\bigrestr}[2]{#1\big|_{#2}}

\newcommand{\pdiv}{\widetilde{\div}}
\newcommand{\energyspace}{\mathbb{E}}
\newcommand{\maxop}{\calM}

\newcommand{\divspaceE}{\mathcal{N}_{\!\eps}}
\newcommand{\divspaceH}{\mathcal{N}_{\!\mu}}
\newcommand{\projE}{\widetilde{P}_{\eps}}
\newcommand{\projH}{P_{\mu,0}}
\newcommand{\sigmaeff}{\sigma_{\operatorname{eff}}}
\newcommand{\tang}{\mathrm{t}}

\usepackage{comment}

\begin{document}

\title{Leapfrog Crank--Nicolson decoupling of wave--heat-type problems}

\author{Benjamin D\"orich}
\address{Department of Mathematics, Saarland University, 66123 Saarbr\"ucken, Germany}

\email{benjamin.doerich@uni-saarland.de}

\author{Julian D\"orner}
\address{Institute for Applied and Numerical Mathematics, Karlsruhe Institute of Technology, Englerstraße 2, 76149 Karlsruhe, Germany}

\email{julian.doerner@kit.edu}

\subjclass[2020]{Primary 65M12, 65M15, 65M60; Secondary 35Q60, 35Q74, 74F05}

\keywords{leapfrog scheme; Crank--Nicolson scheme; CFL condition; stability; a priori error estimates; coupled hyperbolic-parabolic systems.}

\begin{abstract}
    This paper proposes and analyzes a numerical method for coupled wave-heat systems that arise, for example, in visco-elasticity with memory, thermo-elastic wave propagation with finite thermal speed, and Maxwell's equations coupled to dispersive material laws. Motivated by the widespread use of explicit leapfrog schemes for wave problems and the parabolic time-step restriction for heat equations, we design a second-order scheme that combines leapfrog time integration for the wave part with a Crank--Nicolson step for the heat part. The coupling is arranged in a Strang-splitting--type fashion so that the overall scheme remains explicit in the coupling and is subject only to a CFL condition of hyperbolic type, rather than the more restrictive parabolic constraint.

    We introduce an abstract space discretization that covers a broad class of wave--heat systems and accommodates both conforming and stabilized non-conforming discretizations. Using a general error decomposition, we derive error estimates for the fully discrete scheme under the desired CFL condition. Numerical experiments for our model applications confirm the theoretical results.
\end{abstract}

\maketitle

\section{Introduction}

In this paper we study wave-type systems which are coupled to heat-type problems and can formally be written as
\begin{subequations} \label{eq:full_system_formal}
    \begin{align}
    \pt \wavevar + \waveop \wavevar &= - \coupopfull
    \heatvar + f ,
     \label{eq:full_system_formal_wave}
    \\
    \pt \heatvar + \heatop \heatvar &= \coupopfull^* \wavevar + g  , 
    \label{eq:full_system_formal_heat}
\end{align}
\end{subequations}
where the wave part \eqref{eq:full_system_formal_wave} is given as a two-component system with  $\wavevar = (\wavevarOne,\wavevarTwo) \in \wavespace = \wavespaceOne \times \wavespaceTwo$ 
and a skew-adjoint operator 
$\waveop$,
the heat part by
 some accretive operator $\heatop$ in \eqref{eq:full_system_formal_heat},
 and the coupling is encoded in $\coupopfull$.

In this work we have four main classes of examples in mind. 
As a particularly simple and illustrative starting point, we first consider a one-dimensional wave equation coupled to a heat equation, modeling a thermo-elastic string.

A second example is given by nonlocal-in-time linear visco-elasticity systems, which describe materials with memory effects subject to pressure and shear waves.

As a third example, we consider thermo-elastic wave equations, which model the coupled propagation of mechanical and thermal disturbances in a solid and, in contrast to classical Fourier heat conduction, account for a finite propagation speed of thermal signals.
Compare \cite{KupGBB79} and \cite{IgnOS10} for a general introduction.

Finally, we consider Maxwell’s equations in a bulk medium coupled to a system of differential equations posed on an interface, modeling the nonlocal-in-time response of an artificial two-dimensional metamaterial such as graphene.

To be more precise, all our examples fit in the more specific structure of
\eqref{eq:full_system_formal}, i.e. 
\begin{subequations} \label{eq:full_system_formal_v2}
    \begin{align}
    \pt \wavevarOne - \waveopTwo \wavevarTwo &= - \coupop
    \heatvar + f_1
    \label{eq:full_system_formal_v2:WaveOne}
    \\
    \pt \wavevarTwo + \waveopOne \wavevarOne &= 0
    \label{eq:full_system_formal_v2:WaveTwo}
    \\
    \pt \heatvar + \heatop \heatvar &= \coupop^* \wavevarOne + g.
    \label{eq:full_system_formal_v2:Heat}
\end{align}
\end{subequations}

In the applications, the leapfrog scheme is the method of choice to discretize the (spatially discrete) wave problem. 
It is explicit and preserves important geometric properties of the solution. 
However, this scheme comes with a Courant--Friedrichs--Lewy (CFL) condition which roughly speaking only allows for time steps $\tau$ of the order of the mesh size $h$.
While this is widely accepted to be still competitive, the situation changes if an explicit scheme is applied to a heat-type problem as well, since here a scaling as $\tau \lesssim h^2$ is usually required. 
Thus, we propose to combine the leapfrog method for the wave part with the implicit \CN method for the heat part in order to solve the problem only under the weaker CFL condition.
Formal application of the scheme to a finite dimensional version of \eqref{eq:full_system_formal_v2} reads
\begin{subequations}\label{eq:LFCNFormally}
    \begin{align}
                \wavevarnOne{\nhalf}
            &= \wavevarnOne{n}
                + \tauhalf \waveopTwo \wavevarnTwo{n}
                -\tauhalf \coupop \heatvarn{n}
                + \tauhalf f_{1}^n,
            \label{eq:LFCNFormally:One}\\
        \wavevarnTwo{n+1}
            &= \wavevarnTwo{n} 
                - \tau \waveopOne \wavevarnOne{\nhalf},
            \label{eq:LFCNFormally:Two}\\
        \heatvarn{n+1}
            &= \heatvarn{n} 
            - \tauhalf \heatop
                \bigl(\heatvarn{n+1} + \heatvarn{n}\bigr)
            + \tau\coupop^\ast\wavevarnOne{\nhalf}
            + \tauhalf \bigl(
                g^{n+1} + g^{n}
                \bigr)
            \label{eq:LFCNFormally:Three},\\
        \wavevarnOne{n+1}
            &= \wavevarnOne{\nhalf}
                + \tauhalf \waveopTwo \wavevarnTwo{n+1}
                -\tauhalf \coupop \heatvarn{n+1}
                + \tauhalf f_{1}^{n+1},
            \label{eq:LFCNFormally:Four}
    \end{align}
\end{subequations}
at times \(t_n = n\tau + t_0\), for \(n\geq 0\) subject to suitable initial values \((\wavevarn{0},\heatvarn{0})\).
Equations \eqref{eq:LFCNFormally:One}, \eqref{eq:LFCNFormally:Two}, and \eqref{eq:LFCNFormally:Four} constitute the standard leapfrog scheme applied to the wave equation \eqref{eq:full_system_formal_v2:WaveOne}, \eqref{eq:full_system_formal_v2:WaveTwo}, with the heat variable entering as an additional source term. Equation \eqref{eq:LFCNFormally:Three} is a Crank–Nicolson step for the heat variable, where the source term arising from the wave coupling is evaluated at the temporal midpoint in order to keep the overall coupling of both time-integration schemes explicit.
Thus, our scheme has the flavor of a Strang splitting scheme, solving both subproblems in an interleaved fashion, each with a suitable scheme.

Since the application of an explicit scheme is not suitable on the operator level, we discretize in space first.
We present our discretization in an abstract manner in order to cover the four examples mentioned above in a general framework. In particular, we allow for conforming and non-conforming discretizations in the wave part. For the spatially discrete error analysis, we follow the ideas of \cite{Hip.Hoc.Sto.2019} and collect the different error contributions in a decomposition which allows for direct application of results from the literature to derive the optimal error bounds.
We emphasize that we allow for stabilized non-conforming discretizations, and, thus, extend the unified framework in \cite{Hip.Hoc.Sto.2019} about this important class of space discretization schemes.
The fully discrete scheme can then be analyzed as a perturbation of the \CN method. The perturbation can be given a definite sign, which yields a stable scheme under the desired CFL condition \(\tau\lesssim h\).
Further, let us note that we employ summation-by-parts   in order to avoid unphysical constraints on the right-hand sides.

To the best of our knowledge, there is hardly any work on the decoupling strategies for wave–heat-type problems, apart from \cite{LayT12}.
There, the stability of two second-order two-step methods is analyzed. However, the analysis is restricted to ordinary differential equations and does not provide any error bounds.

Nevertheless, there is a large literature available for the individual examples, and we provide a non-exhaustive short overview of results in the following:

In the context of visco-elastic waves, an upwind-flux stabilized discontinuous Galerkin method is a well-established scheme in geophysics, cf. \cite{KaeD06a,KaeD06b}.
For time-integration, implicit Runge--Kutta schemes are discussed in \cite{HocPSTW15}, as well as space-time methods in \cite{DoeFW16,DoeWZ19}.

In the case of linear thermo-elasticity, various numerical and analytical approaches have been proposed in the literature. 
Boundary element methods for coupled thermo-elastic problems in a scattering setting are investigated in \cite{HsiSTS19}, 
while in \cite{JiaR00} the problem is formulated and studied in the framework of abstract evolution equations.
Furthermore, qualitative stability properties for thermo-elastic systems are analyzed in \cite{BodGN14}.

For the Maxwell problem with discontinuous Galerkin discretization in space there are the seminal works by Hesthaven \& Warburton \cite{Hes.W.2002} and Fezoui et al. \cite{FezLLP05}, which lay the foundation for the time-domain Maxwell problem. A generalization to dispersive media, which has structural similarities to our model, was provided by Lanteri \& Scheid \cite{LanS13},
and more recently the authors laid the foundation for the treatment of the interface problem in \cite{DoeDH26}.

\medskip

The rest of the paper is organized as follows:
In Section~\ref{sec:ana_frame}, we introduce the analytical framework as well as the above-mentioned examples that fit into this framework. 
We then introduce the abstract spatial discretization in Section~\ref{sec:space}, and provide a semi-discrete error bound. 
In Section~\ref{sec:full}, we derive our fully discrete method, provide stability estimates, and show convergence of optimal order. 
Section~\ref{sec:application} is devoted to our examples. We discuss the spatial discretization in the specific settings and apply our main results.
Our numerical experiments in Section~\ref{sec:num_exp}
confirm the theoretical findings.
Some computations are postponed to the Appendix~\ref{sec:app_1}.

\medskip

\subsection*{Notation}

Throughout this work, we denote by $L^2(\Omega)$ the standard Lebesgue space of square integrable functions on a domain \(\Omega\).  The standard Sobolev spaces of order $k \geq 0$ are denoted with $H^k(\Omega)$, as well as its closed subspace $H_0^1(\Omega)$ with vanishing trace.
Further, we use the standard spaces for the weak divergence $H(\div)$, 
together with its closed subspaces $H_0 (\div)$, i.e. $ \vbold \cdot\nu = 0$ on $\partial \Omega$
for all $\vbold \in H_0 (\div)$.
For a Hilbert space $X$, we denote its topological dual by $X^*$ together with the induced norm.
In addition, we
write $\alpha \lesssim \beta$ if there is a constant $C$ independent of the spatial mesh parameter $h$ and the time step size $\tau$ such that $\alpha \leq C \beta$.

\section{Analytical framework and main examples}
\label{sec:ana_frame}

We first discuss the analytical framework for the rest of the paper. For a rigorous treatment, we move to a weak formulation of \eqref{eq:full_system_formal}.
Before we present the equation, we introduce the two Gelfand triples
\begin{equation*}
\wavespaceV \hookrightarrow \wavespace \hookrightarrow \wavespaceV^*,
\qquad
\heatspaceV \hookrightarrow \heatspace \hookrightarrow \heatspaceV^*
\end{equation*}
with dense embeddings, belonging to the wave and heat part, respectively.
On these spaces, we consider the bilinear forms
\begin{align*}
    &\ipwave \colon \wavespace \times \wavespace \to \R, 
    \quad
    \sbilwave \colon \wavespaceV \times \wavespaceV \to \R, 
    \\
    &\ipheat \colon \heatspace \times \heatspace \to \R, 
    \quad
    \sbilheat \colon \heatspaceV \times \heatspaceV \to \R ,
\end{align*}
where $\ipwave$ and $\ipheat$ are simply the inner products of $\wavespace$ and $\heatspace$, respectively, and $\sbilwave$ and $\sbilheat$ encode the differential operators.  Further, we introduce the coupling form
\begin{equation*}
    \coupopfullform \colon \heatspaceV \times \wavespaceV \to \R \,.
\end{equation*}
In order to resemble the two-field structure of the wave part, cf. \eqref{eq:full_system_formal_v2}, we assume that the bilinear form $\ipwave$ and $\sbilwave$ can be decomposed as
\begin{equation*} 
    \ipwave(x,y) = \ipwaveOne(x_1,y_1) + \ipwaveTwo(x_2,y_2),
\qquad
    \sbilwave(x,y) = \sbilwaveOne(x_1,y_2) - \sbilwaveTwo(x_2,y_1),
\end{equation*}
and the coupling as well as the forcing only appears in the first component of the wave system as 
\begin{equation} \label{eq:coup_f_decomp} 
   \coupopfullform(\heatvar, y)  =    \coupopform(\heatvar, y_1) ,
   \qquad
\ipwave(f,y) = \ipwaveOne(f_1,y_1),
\end{equation}
With this, we can formulate problem \eqref{eq:full_system_formal} as follows:

Seek $\wavevar \in C^1(\R,\wavespace) \cap C(\R,\wavespaceV)$
and
$\heatvar \in C^1(\R,\heatspace) \cap C(\R,\heatspaceV)$
which satisfy 
\begin{subequations} \label{eq:full_system}
\begin{align}
 \ipwave(\pt \wavevar, y)
+
\sbilwave(\wavevar, y)
&= - \coupopfullform(\heatvar, y) +
\ipwave(f,y)
 \label{eq:full_system_wave}
    \\
\ipheat(\pt \heatvar, \varphi)
+
\sbilheat(\heatvar, \varphi)
&= 
\coupopfullform(\varphi, \wavevar) +
\ipheat(g,\varphi) , 
\label{eq:full_system_heat}
\end{align}
\end{subequations}
for all $y \in \wavespaceV$ and $\varphi \in \heatspaceV$,
as well as the initial conditions $\wavevar(0) = \wavevar_0$ and $\heatvar(0) = \heatvar_0$.
For the analysis, we make the following assumptions, which are verified later on for our individual examples.

\begin{assumption} \label{ass:cont_forms}
(a) The bilinear forms $\ipwave$ and $\ipheat$ are inner products and induce the norms
\begin{equation*}
    \| y \|_{\wavespace}^2 = \ipwave(y,y)
    \quad
    \text{and}
    \quad
    \| \varphi \|_{\heatspace}^2 = \ipheat(\varphi,\varphi),
 \end{equation*}
for any $y \in \wavespace$ and $\varphi \in \heatspace$.
 
\noindent
(b)   It holds that %
\begin{align*}
\sbilwave(y, y) = 0 
    \quad
    \text{and}
    \quad
     \sbilheat(\varphi, \varphi) \geq 0
\end{align*}
for $y \in \wavespaceV$ and $\varphi \in \heatspaceV$.

\noindent
(c) There exist spaces $\fullspace \subset \fullspace_1  \times \fullspace_2  \subset \wavespaceV \times \heatspaceV$ 
and bilinear forms
 $\sbilwaveext \colon \fullspace_1  \times \wavespace$
 and
 $\coupopfullformext \colon \fullspace_2 \times \wavespace$
 such that it holds
\begin{align*}
\sbilwave(x, y) + \coupopfullform(\heatvar, y) = \sbilwaveext(x,y)
+
\coupopfullformext(\heatvar, y)
\end{align*}
for all $(x,\heatvar) \in \fullspace$ and $y \in \wavespaceV$.
\end{assumption}
\begin{remark} \label{rem:Ass_ibp}
        The rather technical assumption in part (c) allows for coupling operators that do not act in the bulk but via an interface or boundary.
        Such a coupling can be expressed indirectly via partial integration, which motivates our assumption.
        A concrete examples is given later in Section~\ref{sec:ana_Maxwell}.
 \end{remark}
Let us also emphasize that part (c) implies that a solution $(\wavevar,\heatvar)$ of \eqref{eq:full_system}
which is continuous in $\fullspace$ also satisfies
\begin{align} \label{eq:first_eq_tested_in_X}
 \ipwave(\pt \wavevar, y)
+
\sbilwaveext(\wavevar, y)
&= - \coupopfullformext(\heatvar, y) +
\ipwave(f,y)
\end{align}
for all $y \in \wavespace$ using a density argument.
We also note that with some additional notation, one could also treat a dissipative wave-type problem, i.e. $\sbilwave(y, y) \geq 0 $.

This framework immediately allows for the following a-priori stability result. 
\begin{lemma}
Let $\wavevar \in C^1([0,T],\wavespace) \cap C([0,T],\wavespaceV)$
and
$\heatvar \in C^1([0,T],\heatspace) \cap C([0,T],\heatspaceV)$ be a solution of
 \eqref{eq:full_system}, then
\begin{equation*}
    \| \wavevar(t) \|_{\wavespace}^2 +  \| \heatvar(t) \|_{\heatspace}^2
    \leq
    e 
    \bigl(
    \| \wavevar(0) \|_{\wavespace}^2 +  \| \heatvar(0) \|_{\heatspace}^2
     \bigr)
    +
    e T 
    \bigl(
    \int_0^t
    \| f(s) \|_{\wavespace}^2 +  \| g(s) \|_{\heatspace}^2 \,ds
    \bigr) .
    \end{equation*}
\end{lemma}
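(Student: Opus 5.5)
We argue by a standard energy estimate: test the wave equation with the solution, test the heat equation with the temperature, add the two so that the coupling terms cancel, and close with Gronwall. Concretely, for fixed $t$ we choose $y = \wavevar(t) \in \wavespaceV$ in \eqref{eq:full_system_wave} and $\varphi = \heatvar(t) \in \heatspaceV$ in \eqref{eq:full_system_heat}. This is admissible by the regularity $\wavevar \in C([0,T],\wavespaceV)$ and $\heatvar \in C([0,T],\heatspaceV)$. Since $\wavevar \in C^1([0,T],\wavespace)$ and $\ipwave$ is the inner product of $\wavespace$, we have $\ipwave(\pt \wavevar, \wavevar) = \tfrac12 \tfrac{d}{dt}\|\wavevar\|_{\wavespace}^2$, and analogously for the heat part.

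Adding the two tested equations, the coupling contributions $-\coupopfullform(\heatvar,\wavevar)$ and $+\coupopfullform(\heatvar,\wavevar)$ cancel exactly. This cancellation is the structural point of the argument, and it needs only the form $\coupopfullform$ on $\heatspaceV\times\wavespaceV$, not Assumption~\ref{ass:cont_forms}(c). By Assumption~\ref{ass:cont_forms}(b), $\sbilwave(\wavevar,\wavevar)=0$ and $\sbilheat(\heatvar,\heatvar)\ge 0$, so we obtain
\begin{equation*}
\tfrac12 \tfrac{d}{dt}\bigl(\|\wavevar\|_{\wavespace}^2+\|\heatvar\|_{\heatspace}^2\bigr)
\le \ipwave(f,\wavevar)+\ipheat(g,\heatvar)
\le \tfrac{1}{2T}\bigl(\|\wavevar\|_{\wavespace}^2+\|\heatvar\|_{\heatspace}^2\bigr) + \tfrac{T}{2}\bigl(\|f\|_{\wavespace}^2+\|g\|_{\heatspace}^2\bigr),
\end{equation*}
where the second step uses Cauchy--Schwarz and a weighted Young inequality with weight $1/T$.

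Multiplying by $2$ and integrating from $0$ to $t$ gives, with $E(t)=\|\wavevar(t)\|_{\wavespace}^2+\|\heatvar(t)\|_{\heatspace}^2$,
\begin{equation*}
E(t)\le E(0)+T\int_0^t \bigl(\|f\|_{\wavespace}^2+\|g\|_{\heatspace}^2\bigr)\,ds+\tfrac1T\int_0^t E(s)\,ds.
\end{equation*}
The first two terms on the right are nondecreasing in $t$, so Gronwall's lemma yields $E(t)\le e^{t/T}\bigl(E(0)+T\int_0^t(\|f\|_{\wavespace}^2+\|g\|_{\heatspace}^2)\,ds\bigr)$. Since $t\le T$, we have $e^{t/T}\le e$, which is exactly the claimed bound.

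The only point requiring care is the justification of the chain rule $\tfrac{d}{dt}\|\wavevar\|_{\wavespace}^2 = 2\ipwave(\pt\wavevar,\wavevar)$ together with the admissibility of the test functions. Both follow directly from the stated $C^1$ and $C$ regularity in the respective spaces, so there is no genuine obstacle. Choosing the Young weight $1/T$ is what produces the constant $e$ rather than $e^{T}$.
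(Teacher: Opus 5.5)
Your proposal is correct and follows the same route as the paper: test with the solution so that the coupling terms cancel, use Assumption~\ref{ass:cont_forms}(b), apply Cauchy--Schwarz with a Young inequality weighted by $1/T$, and finish with Gronwall's lemma. Your version simply writes out the details that the paper's one-line proof leaves implicit, including how $e^{t/T}\leq e$ produces the stated constants.
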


\begin{proof}
To obtain this result, we test with the solution in  \eqref{eq:full_system}, apply Cauchy-Schwarz as well as a (time-)weighted Young inequality, and conclude via the Gronwall lemma in \cite[Proposition~2.1]{Emm99}.
\end{proof}

\begin{remark}
    We could also allow for nonlinear contributions $f$ or $g$ in \eqref{eq:full_system}, which are locally Lipschitz in $\wavespace$ or $\heatspace$, respectively. However, since this does not change the analysis in an essential manner, we omit this here for the sake of presentation.
\end{remark}

In the following sections, we discuss the examples from the introduction and show how they fit into the framework from above.

\subsection{Thermo-elastic string}
\label{sec:ana_thermoelastic_string}

Our first example concerns the modelling of a string on an interval $\Omega =(a,b)$ with displacement $u$ subject to a heating field $\heatvar$, see for example Section~3.3.1 in \cite{BodGN14}. The governing equations read
\begin{subequations} \label{eq:acous_wave_heat_1d}
\begin{align}
    \pt^2 u - \partial_x^2 u &= - \partial_x \heatvar + f ,
    \\
    \pt \heatvar - \partial_x^2 \heatvar &= - \partial_x \pt u + g .
\end{align}
\end{subequations}
In order to fit this problem into our general framework, we have to make the (unusual) definition 
$x = (\pt u, u)$. If we want to change the positions, we simply have to change the roles of the first and second component of $x$ in the rest of this work.
Consequently, we define the spaces
 \begin{align*}
        \wavespace &= L^2(\Omega) \times H_0^1(\Omega) , 
        \quad
        \wavespaceV =  H_0^1(\Omega) \times H_0^1(\Omega), 
        \qquad
        \heatspace =  L^2(\Omega),
        \quad
        \heatspaceV =  H_0^1(\Omega) ,
      \end{align*}
    as well as the forms
\begin{equation*}
\begin{alignedat}{4}
\ipwave(\cdot,\cdot)
&{}= \langle \cdot, \cdot \rangle_{L^2 \times H_0^1},
&\quad
\sbilwaveOne(x_1,y_2) 
&{}=
 - \langle  \partial_x x_1 ,  \partial_x y_2 \rangle_{L^2},
 &\quad
\sbilwaveTwo(x_2,y_1)
&{}=
  \sbilwaveOne(x_2,y_1),
\\
\ipheat(\cdot,\cdot)
&{}=
 \langle \cdot, \cdot \rangle_{L^2},
&\quad
\sbilheat(\cdot,\cdot)
&{}= 
\langle \cdot, \cdot \rangle_{H_0^1},
\end{alignedat}
\end{equation*}
such that part (a) and (b) of Assumption~\ref{ass:cont_forms} are satisfied.
Further, we define
the  coupling form
\begin{align*}
    \coupopfullform(\varphi,x)
     =
    - \langle
     \partial_x \varphi ,  \pt u
     \rangle_{L^2},
\end{align*}
and 
observe that the extended bilinear forms in Assumption~\ref{ass:cont_forms}~(c) can be chosen to be the original ones.

\subsection{Non-local in time visco-elasticity and acoustics}
\label{sec:ana_ell_wave}
In our second example we treat wave propagation in visco-elastic solids and its acoustic analogue. 
We begin with the visco-elastic wave equations, i.e., elastic waves in solids subject to non-local in time material laws.

Let $\Omega \subset \R^d$, $d\in\{2,3\}$, denote the spatial domain. 
We denote by $\vbold$ the velocity field and by $\sigmabold_i$ the symmetric stress tensors corresponding to different 
relaxation mechanisms, $i=0,\ldots,G$. 
The strain-rate tensor is given by $\epsilon(\vbold) = \Sym (D \vbold)$.

They satisfy the system
\begin{subequations} \label{eq:visco_elastic}
\begin{align}        
  \rho \partial_t \vbold - \div \sigmabold_0 
    &= 
    \sum_{i=1}^G \div \sigmabold_i  +  f,
    \\
  C_0^{-1} \partial_t \sigmabold_0 - \epsilon(\vbold) &= 0,
    \\
  C_i^{-1} \partial_t \sigmabold_i
    + (C_i \tau_i )^{-1} \sigmabold_i
    &= \epsilon(\vbold) , \qquad  i =1,\ldots,G .
\end{align}
\end{subequations} 
To fit this problem into our abstract framework, we split the unknowns in
\[
  \wavevar = (\vbold, \sigmabold_0),
  \qquad
  \heatvar = (\sigmabold_1,\ldots,\sigmabold_G),
\]
and choose the spaces
\begin{equation*}
\begin{alignedat}{4}       
  \wavespace &{}=  L^2(\Omega)^{(d+1)d}, 
  &{}\quad
  \wavespaceV &{}= H^1_0(\Omega,\R^d)
                 \times H(\div, \Omega,\R^{d\times d}_{\textup{sym}}),
        \\
  \heatspace &{}=  L^2(\Omega)^{G d^2}, 
  &{}\quad
  \heatspaceV &{}=  H(\div, \Omega,\R^{d\times d}_{\textup{sym}})^G .
\end{alignedat} 
\end{equation*}
For $x=(\vbold, \sigmabold_0)$, $y = (\wbold,\widetilde{ \sigmabold_0})$ and 
$\heatvar = (\sigmabold_1,\ldots,\sigmabold_G)$ we define
\begin{align*}
  \ipwave(x,y) 
    &= \langle \rho \vbold, \wbold \rangle_{L^2}
       + \langle C_0^{-1} \sigmabold_0, \widetilde{\sigmabold_0} \rangle_{L^2},
  \\
  \sbilwave(x,y) 
    &=
    - \langle \div \sigmabold_0 , \wbold \rangle_{ L^2}
    - \langle \epsilon( \vbold ) ,\widetilde{ \sigmabold_0} \rangle_{L^2},
  \\
  \ipheat(\heatvar,\widehat{\heatvar}) 
    &= \sum_{i=1}^G
       \langle C_i^{-1} \sigmabold_i, \widehat{\sigmabold}_i \rangle_{L^2},
  \\
  \sbilheat(\heatvar,\widehat{\heatvar}) 
    &= \sum_{i=1}^G
       \langle (C_i \tau_i)^{-1} \sigmabold_i, \widehat{\sigmabold}_i \rangle_{L^2},
\end{align*}
and the coupling form
\begin{align*}
  \coupopfullform(\heatvar, \wavevar) 
    &=
    - \sum_{i=1}^G   \langle
        \div \sigmabold_i  , \vbold
      \rangle_{L^2}
    =
      \sum_{i=1}^G   \langle
        \sigmabold_i  , \epsilon( \vbold )
      \rangle_{L^2},
\end{align*}
where the second equality follows from integration by parts and the homogeneous boundary conditions.
Note that by using the first equality for $\coupopfullform$ the extended bilinear forms in Assumption~\ref{ass:cont_forms}~(c) can be chosen to be the original ones.

\medskip

A scalar visco-acoustic analogue of this model is obtained by replacing the tensor-valued stresses 
$\sigmabold_i$ by scalar pressures $p_i$, $i=0,\ldots,G$, and the shear modulus by the bulk modulus $\kappa$. 
This leads to the system
\begin{subequations} \label{eq:visco_acoustic}
\begin{align}
  \rho \partial_t \vbold - \nabla p_0 
    &= \sum_{i=1}^G \nabla p_i  +  f,
    \\
  \kappa^{-1} \partial_t p_0 -  \div \vbold &= 0,
    \\
  (\kappa \tau_P)^{-1} \partial_t p_i 
    + (\kappa \tau_P \tau_i)^{-1}  p_i
    &= \div \vbold ,
       \qquad  i =1,\ldots,G .
\end{align}
\end{subequations}
To fit \eqref{eq:visco_acoustic} into our framework, we again split
\[
  \wavevar = (\vbold, p_0),
  \qquad
  \heatvar = (p_1,\ldots,p_G),
\]
and choose
\begin{equation*}
\begin{alignedat}{4}       
  \wavespace &{}=  L^2(\Omega)^{d+1}, 
  &{}\quad
  \wavespaceV &{}= H_0(\div,\Omega,\R^d) \times H^1(\Omega), 
        \\
  \heatspace &{}=  L^2(\Omega)^G,
  &{}\quad
  \heatspaceV &{}=  H^1(\Omega)^G .
\end{alignedat} 
\end{equation*}
For $x=(\vbold,p_0)$, $y = (\wbold,q_0)$ we set
\begin{align*}
  \ipwave(x,y) 
    &= \langle \rho \vbold, \wbold \rangle_{(L^2)^d}
       + \langle \kappa^{-1} p_0, q_0 \rangle_{L^2},
  \\
  \sbilwave(x,y) 
    &=
    - \langle \nabla p_0 , \wbold \rangle_{ L^2}
    - \langle \div \vbold , q_0 \rangle_{L^2},
  \\
  \ipheat(\heatvar,\widehat{\heatvar}) 
    &= \langle (\kappa \tau_P)^{-1} \heatvar, \widehat{\heatvar} \rangle_{(L^2)^G},
  \\
  \sbilheat(\heatvar,\widehat{\heatvar}) 
    &= \sum_{i=1}^G
       \langle (\kappa \tau_P \tau_i)^{-1} p_i, \widehat{p}_i \rangle_{L^2},
\end{align*}
and the coupling form
\begin{align*}
  \coupopfullform(\heatvar, \wavevar) 
    &=
    - \sum_{i=1}^G   \langle
        \nabla p_i  , \vbold
      \rangle_{L^2} .
\end{align*}
Again, the extended bilinear forms in Assumption~\ref{ass:cont_forms}~(c) can be chosen to be the original ones.

\subsection{Thermo-elastic wave coupling}
\label{sec:ana_thermo_wave}

Our third example concerns elastic solids coupled with heat conduction.

Let $\Omega \subset \R^d$, \(d\in\{2,3\}\), be the spatial domain.
We consider the displacement field 
\(\ubold\) 
and the scalar temperature distribution 
\(\heatvar\),
which satisfy the linear thermo-elastic system
\begin{align*}
  \rho \partial_t^2 \ubold - \mu \Delta\ubold - (\lambda + \mu)\nabla(\div\ubold ) 
    &= \alpha \nabla \heatvar + \rho f,\\
  \partial_t \heatvar - \kappa\Delta \heatvar 
    &= \alpha \div (\partial_t\ubold) + h,
\end{align*}
with Lamé parameters \(\lambda \geq 0\), \(\mu > 0\), coupling parameter \(\alpha > 0\), 
and thermal diffusivity \(\kappa > 0\).
We rewrite the system in terms of the velocity 
\(\vbold = \partial_t \ubold\) 
and the symmetric stress \(\sigmabold\), to obtain
\begin{subequations} \label{eq:thermo_elastic}
\begin{align}    
  \rho \partial_t \vbold - \div \sigmabold 
    &= \alpha \nabla \heatvar + \rho f,
    \label{eq:thermo_elastic:evo_v}\\
  \partial_t \sigmabold - C\,\epsilon(\vbold) 
    &= 0,
    \label{eq:thermo_elastic:evo_sigma}\\
  \partial_t \heatvar - \kappa \Delta \heatvar
    &= \alpha \div \vbold + h,
    \label{eq:thermo_elastic:evo_theta}
\end{align}
\end{subequations} 
where \(C\wbold = 2\mu \wbold + \lambda\operatorname{tr}(\wbold)I\) is the isotropic elasticity tensor and
\(\epsilon( \vbold ) = \Sym(D \vbold)\) the symmetric gradient.

To fit \eqref{eq:thermo_elastic} into our abstract framework, we collect the mechanical unknowns in
\(
  \wavevar = (\vbold, \sigmabold)
\)
and treat the temperature as the “heat part” \(\heatvar\).
We choose the spaces
\begin{equation*}
\begin{alignedat}{4}       
  \wavespace &{}=  L^2(\Omega)^{d(d+1)}, 
  &{}\quad
  \wavespaceV &{}= 
      H^1_0(\Omega,\R^d)
      \times
      H(\div, \Omega,\R^{d\times d}_{\textup{sym}}),
  \\
  \heatspace &{}=  L^2(\Omega), 
  &{}\quad
  \heatspaceV &{}=  H^1_0(\Omega) .
\end{alignedat} 
\end{equation*}
For $x = (\vbold,\sigmabold)$ and $y = (\widetilde{\vbold},\widetilde{\sigmabold})$ we define
\begin{equation*}
\begin{alignedat}{4}   
  \ipwave(x,y) 
    &= \langle \rho \vbold, \widetilde{\vbold} \rangle_{L^2}
       + \langle C^{-1} \sigmabold, \widetilde{\sigmabold} \rangle_{L^2},
  &{}\quad
  \sbilwave(x,y) 
    &=
    - \langle \div \sigmabold , \widetilde{\vbold} \rangle_{ L^2}
    - \langle \epsilon( \vbold ) , \widetilde{\sigmabold} \rangle_{L^2},
  \\
  \ipheat(\heatvar,\widehat{\heatvar}) 
    &= \langle \heatvar, \widehat{\heatvar} \rangle_{L^2},
  &{}\quad
  \sbilheat(\heatvar,\widehat{\heatvar}) 
    &= \langle \kappa \nabla \heatvar, \nabla \widehat{\heatvar} \rangle_{L^2},
\end{alignedat} 
\end{equation*}
and the coupling form
\begin{align*}
  \coupopfullform(\heatvar, \wavevar) 
    &=
    - \langle \alpha \nabla \heatvar , \vbold \rangle_{L^2}.
\end{align*}
As in the previous examples, the extended bilinear forms in Assumption~\ref{ass:cont_forms}~(c) can be chosen to be the original ones.

\subsection{Maxwell equation with dissipative 2D material}
\label{sec:ana_Maxwell}
Our last example models the light-matter interaction of thin, two-dimensional me\-ta\-ma\-ter\i\-als, like gra\-ph\-en\-e.
Here, the graphene sheet is described by hypersurface with a non-local in time response, modelled via a set of ordinary extension.

Let \(\Omega = (-1,1)\times (0,1)^2\) denote a rectangular domain, partitioned by an interface \(\Fint\) such that
\begin{equation*}
    \Omega_1 = (-1,0) \times (0,1)^2,
    \quad 
    \Omega_2 = (0,1) \times (0,1)^2,
    \quad 
    F_{\operatorname{int}} = \monosetc{0}\times (0,1)^2.
\end{equation*}
Besides the electric and magnetic field $E$ and $H$,  we further have a set of auxiliary fields $(\heatvar_1,\ldots,\heatvar_\ell)$ on the interface \(\Fint\) with values parallel to the interface.
The governing equations read 
\begin{subequations} \label{eq:Maxwell_interface}
\begin{align}
   \pt E^{(i)} - \curl H^{(i)}  &= J^{(i)},  
    &\text{in } [0,T]\times \Omega_i,
    \\
    \pt H^{(i)} + \curl E^{(i)}  
    &= 0, &\text{in } [0,T]\times \Omega_i,
    \\
    \pt \heatvar + (\heatop \otimes I_2) \heatvar 
    &= (c_\ell \otimes I_2)\wavevarOneparE  + g, 
    &\text{on } [0,T]\times \Fint,
\end{align}
where \(E^{(i)} = E|_{\Omega_i}\), and $\wavevarOneparE$ is the tangential projection of $E$ onto \(\Fint\).
The matrix $\heatop \in \R^{\ell \times \ell}$ satisfies \(\varphi^T \heatop \varphi \geq 0\) for all \(\varphi \in \R^\ell\), i.e., its symmetric part is positive semi-definite, and \(c_\ell \in \monosetc{0,1}^\ell\), \(c_\ell\neq 0\),  is the coupling vector.

In  order to close the system described by \eqref{eq:Maxwell_interface}, a coupling of the auxiliary variables into the Maxwell system needs to be described. 
This is formally given by the interface condition
\begin{equation}\label{eq:MaxwellCouplingCondition}
    \jmp{ H\times \nint}_\Fint = - (c_\ell^T\otimes I_2)\heatvar,
\end{equation}
\end{subequations}
where \(I_2 = \operatorname{diag}(1,1)\), the vector \(\nint\) denotes the unit normal on \(\Fint\), and the double bracket the jump at the interface.
The latter condition has to be carefully included into the system via the domain of the operators since the coupling is not described by an operator defined on the bulk.
We will further elaborate on this at the end of the section.

To fit this problem into our abstract framework, we set $\wavevar = (\wavevarOne,\wavevarTwo) = (E,H)$
    and use the spaces
    \begin{align*}
        \wavespace &= L^2(\Omega)^6, 
        \quad
        \wavespaceV = H^{\imp}_0(\curl) \times H^\imp(\pcurl),
        \\
        \heatspace &= \heatspaceV = L^2(\Fint)^{2\ell},
        \quad
        \ell\in \mathbb{N}_+,
      \end{align*}
    with the regularized \(\curl\)-spaces
    \begin{align*}
        H^\imp_0(\curl) &= \setc{E\in L^2(\Omega)^3}{\curl E\in L^2(\Omega)^3,\, E_\parallel \in L^2(\Fint)^2,\, E\times\nu = 0},\\
        H^\imp(\pcurl) &= \setc{H\in L^2(\Omega)^3}{\pcurl\, H \in L^2(\Omega)^3,\, \jmp{H\times n}_\Fint \in L^2(\Fint)^2}.
    \end{align*}
    Here, \(\pcurl\) denotes a piecewise \(\curl\), i.e. for \(H\in L^2(\Omega)^3\) we define
    \begin{equation*}
        \pcurl\, H\in L^2(\Omega)^3 \quad\Leftrightarrow\quad
            \restr{\curl H}{\Omega_i} \in L^2(\Omega_i)^3,\,\, i\in\monosetc{1,2}. 
    \end{equation*}
    For any details on those spaces and the resulting traces we refer to \cite[Sec.~2.2.2]{Ass.C.L.2018.MFCE}.
    Finally, we define the forms
    \begin{align*}
        \ipwave(\cdot,\cdot) &= \langle \cdot, \cdot \rangle_{L^2(\Omega)^6},
        \\
        \ipheat(\cdot,\cdot) &= \langle \cdot,\cdot \rangle_{L^2(\Fint)^{2\ell}},
        \\
        \sbilwave(x,y) &= \ipwave(\curl x_1, y_2) - \ipwave(x_2, \curl y_1),
        \\
        \sbilheat(\heatvar,\varphi) &= \ipheat( (\heatop \otimes I_2) \heatvar, \varphi),
    \end{align*}
    with coupling form
    \begin{align*}
        \coupopfullform( \varphi, \wavevar) 
        =
        \ipheat( \varphi , (c_\ell \otimes I_2)\wavevarOnepar).
    \end{align*}
    Standard results yield that part (a) and (b) of Assumption~\ref{ass:cont_forms} are satisfied.
    
    As mentioned before, we include condition \eqref{eq:MaxwellCouplingCondition} in the operator domain and define
    \begin{equation*}
        \fullspace 
        =
        \wavespaceV \times \heatspaceV \cap 
        \monosetc{ 
         \jmp{\wavevarTwo \times \nint}_\Fint = - (c_\ell^T\otimes I_2)\heatvar
        },
    \end{equation*}
    together with the extended forms
    \begin{equation*}
        \coupopfullformext \equiv 0, 
        \qquad
        \sbilwaveext (x,y) = p(\curl \wavevarOne , y_2) - p(\pcurl\, \wavevarTwo, y_1) .
    \end{equation*}
    We verify Assumption~\ref{ass:cont_forms}~(c) and prove well-posedness of the example in Appendix~\ref{appendix:Maxwell} below.

\section{Spatial discretization}
\label{sec:space}

We now turn to the discretization of \eqref{eq:full_system} in space. 
We therefore consider the finite dimensional spaces
$\wavespaceVh$ and $\heatspaceVh$ for the discrete wave function $\wavevarh$ and discrete heat function $\heatvarh$. Since we do not only consider conforming methods for the wave problem, we do not assume the discrete spaces $\wavespaceVh$ to be subspaces of $\wavespaceV$, but only
\begin{equation} \label{eq:emb_discr_spaces}
    \wavespaceVh \subset \wavespace
    \quad
    \text{and}
    \quad
    \heatspaceVh \subset \heatspaceV.
\end{equation} 
In addition, we also use the subspaces with the weaker norms induced by 
$\ipwave$ and $\ipheat$ and denote them by $\wavespaceh$ and $\heatspaceh$, respectively.
Thus, we can keep the forms $\ipwave$, $\ipheat$ and $\sbilheat$ for the spatial discretization and only focus on the discrete counterparts of
$\sbilwave$ and $\coupopfullform$.
We consider
\begin{align*}
    \sbilwaveh \colon (\wavespaceV + \wavespaceVh) \times ( \wavespaceV  + \wavespaceVh ) \to \R, 
\qquad
   \coupopfullformh \colon \heatspaceh \times ( \wavespaceV  + \wavespaceVh ) \to \R \,,
\end{align*}
and can thus formulate the semi discrete problem as follows:

Seek $\wavevarh \in C^1([0,T],\wavespaceVh)$
and
$\heatvarh \in C^1([0,T],\heatspaceVh)$
such that
\begin{subequations} \label{eq:full_system_space}
\begin{align}
 \ipwave(\pt \wavevarh, y_h)
+
\sbilwaveh(\wavevarh, y_h)
&= - \coupopfullformh(\heatvarh, y_h) +
\ipwave(f_h,y_h) ,
 \label{eq:full_system_wave_space}
    \\
\ipheat(\pt \heatvarh, \varphi_h)
+
\sbilheat(\heatvarh, \varphi_h)
&= 
\coupopfullformh(\varphi_h, \wavevarh) +
\ipheat(g_h,\varphi_h) ,
\label{eq:full_system_heat_space}
\end{align}
\end{subequations}
holds for all $y_h \in \wavespaceVh$ and $\varphi_h \in \heatspaceVh$
with initial conditions
 $\wavevarh(0) = \wavevar_{h,0}$ and $\heatvarh(0) = \heatvar_{h,0}$.

\begin{remark}
    We could also treat the more general case of a non-conforming method for the heat part, where the second part of \eqref{eq:emb_discr_spaces} is not satisfied, for example using mass lumping or symmetric interior penalty dG for the heat part. However, this only leads to a repetition of the arguments from the wave part, and we thus omit this for the sake of readability.  
\end{remark}

In the following, we make two structural assumptions on the discrete form for the wave part. The first one guarantees that up to a stabilization term, the skew-adjointness is preserved.
Furthermore, we assume that the stabilization only acts on discrete functions.
\begin{assumption} \label{ass:discr_forms_skew_stab}
(a) The discrete form $\sbilwaveh$ can be decomposed into a skew-symmetric and positive (stabilization) part as
$
\sbilwaveh 
=
\sbilwavehskew
+
\sbilwavehpos
$
which satisfy
\begin{align*}
\sbilwavehskew(y_h, y_h) = 0 ,
\qquad
\sbilwavehpos(y_h, y_h) 
\geq 0 
\end{align*}
for all $y_h \in \wavespaceVh$.
We additionally require symmetry:
\(\sbilwavehpos(x_h,y_h)=\sbilwavehpos(y_h,x_h)\) for all
\(x_h,y_h\in\wavespaceVh\).

(b) For all $y \in \wavespaceV$, the stabilization vanishes, i.e.,
\begin{align*}
\sbilwavehpos(y, y_h) = 0 
\end{align*}
for all $y_h \in \wavespaceVh$.   
\end{assumption}

The next assumption is a consistency condition which allows us to incorporate coupling information of the wave part from the domain into the error analysis via integration by parts, see Remark~\ref{rem:Ass_ibp}.
\begin{assumption} \label{ass:consistency}
    For $(\wavevar,\heatvar) \in \fullspace$ and $y_h \in \wavespaceh$ it holds 
      the generalized consistency condition
\begin{align*}
  \sbilwaveh( \wavevar, y_h)
    +
    \coupopfullformh(  \heatvar, y_h)
    =
    \sbilwaveext( \wavevar, y_h)
    + 
    \coupopfullformext(\heatvar, y_h),
\end{align*}
where $\sbilwaveext$ and $\coupopfullformext$ are defined in Assumption~\ref{ass:cont_forms}.
\end{assumption}

The positive part in our discretization stems from numerical stabilization such as upwinding, commonly used with discontinuous Galerkin methods.
We associate with it a semi-norm
$    \stabnormwave{y_h}^2 = \sbilwavehpos(y_h, y_h) $
and note, due to the skew-adjointness of $\sbilwavehskew$, the relation
\begin{equation} \label{eq:rel_sbilwaveh_pos}
    \sbilwaveh(y_h, y_h)  = | y_h |_{\sbilwavehpos}^2 .
\end{equation}
For the best approximation result, we further require several projections into the discrete spaces. We consider the  orthogonal projections
\begin{equation*}
\projwave \colon \wavespace \to \wavespaceVh ,
\quad
\projheat \colon \heatspace \to \heatspaceVh ,
\end{equation*}
with respect to the inner products $\ipwave$ and $\ipheat$, respectively,
and also the reference operators
\begin{equation*}
\projwaveRef \colon \wavespaceV \to \wavespaceVh
\quad
\projheatRef \colon \heatspaceV \to \heatspaceVh .
\end{equation*}
In the spirit of \cite{Hip.Hoc.Sto.2019},
we define the following errors terms
in order to keep track of the different contributions and to estimate them for the specific examples:
\begin{subequations} \label{eq:def_space_errors}
    \begin{align}
    \Delta p ( \wavevar,y_h ) 
    &=
    \ipwave( \projwaveRef \wavevar - \wavevar , y_h) ,
    \label{eq:def_space_errors_Delta_p}
    \\
    \Delta_{s,b} ( \wavevar, \heatvar,y_h )  
    &=
    \sbilwaveh( \projwaveRef \wavevar, y_h) 
    +
    \coupopfullformh( \projheatRef \heatvar, y_h) 
    -
    \sbilwaveext( \wavevar, y_h) 
    -
    \coupopfullformext(\heatvar, y_h) ,
    \label{eq:def_space_errors_Delta_s_b}
    \\
    \Delta f (y_h)
    &=
    \ipwave(f_h - f,y_h) ,
    \label{eq:def_space_errors_f}
    \\
    \Delta \ipheat( \heatvar,\varphi_h ) 
    &=
    \ipheat( \projheatRef \heatvar - \heatvar, \varphi_h) ,
    \label{eq:def_space_errors_Delta_t_p}
    \\
    \widetilde{\Delta}_{s,b} ( \wavevar, \heatvar,\varphi_h ) 
    &=
    \sbilheat( \projheatRef \heatvar - \heatvar, \varphi_h)
    -
    \coupopfullformh( \varphi_h , \projwaveRef \wavevar)
    + 
    \coupopfullform( \varphi_h ,  \wavevar ) , 
    \label{eq:def_space_errors_Delta_t_s_b}
    \\
    \Delta g  (\varphi_h)
    &=
     \ipheat(g_h - g,\varphi_h) .
    \label{eq:def_space_errors_g}
    \end{align}
\end{subequations}
Here, the errors in \eqref{eq:def_space_errors_Delta_p}
and \eqref{eq:def_space_errors_Delta_t_p} constitute best-approximation errors,
the terms \eqref{eq:def_space_errors_Delta_s_b} and \eqref{eq:def_space_errors_Delta_t_s_b} contain the approximation of the differential and coupling operators, and 
\eqref{eq:def_space_errors_f} and \eqref{eq:def_space_errors_g}
describe the approximation of the right-hand sides. This allows us to formulate the following abstract error result for the discrete errors 
\begin{equation} \label{eq:def_space_error}
\waveerrh(t) = \projwaveRef \wavevar(t) - \wavevarh(t) ,
\qquad
\heaterrh(t) = \projheatRef \heatvar(t) - \heatvarh(t).
\end{equation}
The full error is then decomposed in the discrete error and some best-approximation error. 

\begin{theorem} \label{thm:space}
Let $\wavevar \in C^1([0,T],\wavespaceV)$
and
$\heatvar \in C^1([0,T],\heatspaceV)$ be the solution of
 \eqref{eq:full_system},
 which satisfies
 $(\wavevar, \heatvar) \in C^1([0,T],\fullspace)$,
and
$(\wavevarh,\heatvarh)$ the solution of \eqref{eq:full_system_space}.
Further, let Assumptions~\ref{ass:cont_forms}, \ref{ass:discr_forms_skew_stab},
and \ref{ass:consistency} hold true.
Then, the errors defined in \eqref{eq:def_space_error} satisfy
\begin{equation*} 
    \begin{aligned}
   & \| \waveerrh(t) \|_{\wavespaceh}^2
    +
    \| \heaterrh(t) \|_{\heatspaceh}^2
    +
      \int_0^t 
     |\waveerrh(t) |_{\sbilwavehpos}^2
     \,ds
    \\
    &
     \qquad
     \lesssim 
     \| \waveerrh(0) \|_{\wavespaceh}^2 
     +
     \| \heaterrh(0) \|_{\wavespaceh}^2 
     +
     \int_0^t 
     \|\Delta \ipwave (\pt \wavevar)\|_{\wavespaceh^*}^2
    +
    \|\Delta \ipheat (\pt \heatvar)\|_{\heatspaceh^*}^2
     \,ds
          \\
     &
     \qquad
     \quad
     +
     \bigl|
     \int_0^t 
      \Delta_{s,b} ( \wavevar, \heatvar, \waveerrh)  
      \,ds
      \bigr|
    +
     \int_0^t  \| \widetilde{\Delta}_{s,b} ( \wavevar, \heatvar)   \|_{\heatspaceh^*}^2
     \,ds
     +
     \int_0^t 
     \|\Delta f \|_{\wavespaceh^*}^2
    +
    \|\Delta g\|_{\heatspaceh^*}^2
     \,ds
\end{aligned}
\end{equation*}
with constants independent of $h$.
\end{theorem}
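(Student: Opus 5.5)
We derive error equations for $(\waveerrh,\heaterrh)$ and then run an energy argument with Gronwall, in the style of \cite{Hip.Hoc.Sto.2019}.

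\textbf{Wave error equation.} Since $(\wavevar,\heatvar)\in C^1([0,T],\fullspace)$, the extended equation \eqref{eq:first_eq_tested_in_X} holds for all test functions in $\wavespace$, in particular for $y_h\in\wavespaceVh\subset\wavespace$. Combining it with Assumption~\ref{ass:consistency} gives
\begin{equation*}
\ipwave(\pt\wavevar,y_h)+\sbilwaveh(\wavevar,y_h)+\coupopfullformh(\heatvar,y_h)=\ipwave(f,y_h).
\end{equation*}
Now insert $\projwaveRef\wavevar$ and $\projheatRef\heatvar$ and subtract \eqref{eq:full_system_wave_space}. Using the definitions \eqref{eq:def_space_errors}, and the fact that $\pt$ commutes with $\projwaveRef$ (a time-independent linear map), this yields
\begin{equation*}
\ipwave(\pt\waveerrh,y_h)+\sbilwaveh(\waveerrh,y_h)+\coupopfullformh(\heaterrh,y_h)
=\Delta\ipwave(\pt\wavevar,y_h)+\Delta_{s,b}(\wavevar,\heatvar,y_h)-\Delta f(y_h).
\end{equation*}

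\textbf{Heat error equation.} Since $\heatspaceVh\subset\heatspaceV$, we may test \eqref{eq:full_system_heat} directly with $\varphi_h\in\heatspaceVh$ and subtract \eqref{eq:full_system_heat_space}. This gives
\begin{equation*}
\ipheat(\pt\heaterrh,\varphi_h)+\sbilheat(\heaterrh,\varphi_h)-\coupopfullformh(\varphi_h,\waveerrh)
=\Delta\ipheat(\pt\heatvar,\varphi_h)+\widetilde{\Delta}_{s,b}(\wavevar,\heatvar,\varphi_h)-\Delta g(\varphi_h).
\end{equation*}

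\textbf{Energy identity.} Test the wave equation with $y_h=\waveerrh$ and the heat equation with $\varphi_h=\heaterrh$, then add. The coupling terms $\coupopfullformh(\heaterrh,\waveerrh)$ cancel exactly, and this cancellation is the key structural point. By \eqref{eq:rel_sbilwaveh_pos}, $\sbilwaveh(\waveerrh,\waveerrh)=\stabnormwave{\waveerrh}^2$, while $\sbilheat(\heaterrh,\heaterrh)\ge0$ by Assumption~\ref{ass:cont_forms}(b) and can be dropped. Integrating over $(0,t)$ gives
\begin{align*}
&\tfrac12\|\waveerrh(t)\|^2+\tfrac12\|\heaterrh(t)\|^2+\int_0^t\stabnormwave{\waveerrh}^2\,ds\\
&\quad=\tfrac12\|\waveerrh(0)\|^2+\tfrac12\|\heaterrh(0)\|^2+\int_0^t\bigl(\Delta\ipwave(\pt\wavevar,\waveerrh)+\Delta_{s,b}(\wavevar,\heatvar,\waveerrh)-\Delta f(\waveerrh)\bigr)ds\\
&\qquad+\int_0^t\bigl(\Delta\ipheat(\pt\heatvar,\heaterrh)+\widetilde{\Delta}_{s,b}(\wavevar,\heatvar,\heaterrh)-\Delta g(\heaterrh)\bigr)ds.
\end{align*}

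\textbf{Estimating the right-hand side.} The $\Delta_{s,b}$ term is kept as the absolute value of its time integral, exactly as it appears in the statement. Every other term has the form $\ell(e)$ with $\ell$ a functional bounded in the dual norm of $\wavespaceh$ or $\heatspaceh$. For these we use $|\ell(e)|\le\|\ell\|_*\|e\|\le \tfrac{1}{2T}\|\ell\|_*^2+\tfrac{T}{2}\|e\|^2$ (or an $\eps$-weighted version), so the data terms become the squared dual norms in the statement. The resulting integral $\int_0^t\|\waveerrh\|^2+\|\heaterrh\|^2\,ds$ is absorbed by the Gronwall lemma \cite[Proposition~2.1]{Emm99}, as in the a priori stability lemma above, at the cost of a constant depending on $T$ but not on $h$.

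\textbf{Main obstacle.} The delicate step is deriving the wave error equation, i.e. justifying the use of the extended forms. This requires the solution to lie in $\fullspace$ so that \eqref{eq:first_eq_tested_in_X} holds for non-conforming test functions $y_h\notin\wavespaceV$. It also requires checking that the stabilization does not produce an extra consistency term: by Assumption~\ref{ass:discr_forms_skew_stab}(b), $\sbilwavehpos(\wavevar,y_h)=0$, which is already built into Assumption~\ref{ass:consistency}. The only remaining contribution is $\sbilwavehpos(\projwaveRef\wavevar,\cdot)$, which sits inside $\Delta_{s,b}$ and is therefore kept in time-integrated form. Keeping it that way, rather than bounding it pointwise in a dual norm, is what permits the later use of summation by parts.
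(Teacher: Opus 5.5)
Your argument is correct and matches the paper's proof: insert $(\projwaveRef\wavevar,\projheatRef\heatvar)$ into \eqref{eq:full_system_space}, use \eqref{eq:first_eq_tested_in_X} to identify the defects \eqref{eq:def_space_errors}, test with $(\waveerrh,\heaterrh)$ so that the coupling terms cancel, integrate in time, and close with a weighted Gronwall argument. One small caveat, which applies equally to the paper's proof: $t\mapsto|\int_0^t\Delta_{s,b}(\wavevar,\heatvar,\waveerrh)\,ds|$ need not be monotone, so the Gronwall step strictly gives the bound with $\sup_{0\le r\le t}|\int_0^r\Delta_{s,b}(\wavevar,\heatvar,\waveerrh)\,ds|$ in place of the endpoint value; this is the form the paper itself uses in $\mathcal D_N^{\mathrm{pair}}$ of Theorem~\ref{thm:full}.
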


We note that in order to later treat upwinded schemes, we have to leave the 
term $\Delta_{s,b}$ on the right-hand side, since, depending on the chosen stabilization, the error $\waveerrh$ is estimated either in the standard norm or the stabilization semi-norm leading to different orders of convergence in the spatial parameter $h$. 
This will be discussed in detail in the corresponding examples below.

\begin{proof}[Proof of Theorem~\ref{thm:space}]
We insert the projected exact solutions into the discretized equations \eqref{eq:full_system_space} to obtain
\begin{align*}
&
\ipwave(\pt \projwaveRef \wavevar, y_h)
+
\sbilwaveh( \projwaveRef \wavevar, y_h)
+
\coupopfullformh( \projheatRef \heatvar, y_h)
-
\ipwave(f_h,y_h)
\\
&\qquad +
\ipheat(\pt \projheatRef \heatvar, \varphi_h)
+
\sbilheat( \projheatRef \heatvar,  \varphi_h)
-
\coupopfullformh( \varphi_h , \projwaveRef \wavevar)
-
\ipheat(g_h,\varphi_h)
\\
=\, 
& 
\ipwave( \pt \wavevar, y_h)
+
\Delta \ipwave (\pt \wavevar , y_h ) 
+
\sbilwaveext(\wavevar,y_h)
+
\coupopfullformext(\heatvar, y_h)
+
\Delta_{s,b} ( \wavevar, \heatvar,y_h ) 
-
\ipwave(f,y_h)
-
\Delta f (y_h)
\\
&\,
+ 
\ipheat (\pt \heatvar , \varphi_h ) 
+
\Delta \ipheat (\pt \heatvar , \varphi_h ) 
+
\sbilheat(  \heatvar, \varphi_h)
-
\coupopfullform(  \varphi_h, \wavevar)\\
&+
\widetilde{\Delta}_{s,b} ( \wavevar, \heatvar,\varphi_h  ) 
-
\ipheat(g,\varphi_h)
-
\Delta g(\varphi_h)
\\
=\,
& 
\Delta p (\pt \wavevar , y_h ) 
+
\Delta_{s,b} ( \wavevar, \heatvar,y_h ) 
-
\Delta f (y_h)
+
\widetilde{p} (\pt \heatvar , \varphi_h ) 
+
\widetilde{\Delta}_{s,b} ( \wavevar, \heatvar,\varphi_h  ) 
-
\Delta g(\varphi_h) ,
\end{align*}
where we used in the last step that \eqref{eq:full_system} also holds when tested with discrete functions $(y_h,\varphi_h) \in \wavespaceh \times \heatspaceh$, see \eqref{eq:first_eq_tested_in_X}.
Subtracting \eqref{eq:full_system_space}, we obtain the error equation
\begin{align*}
&
\ipwave(\pt  \waveerrh, y_h)
+
\sbilwaveh( \waveerrh, y_h)
+
\coupopfullformh( \heaterrh, y_h)
 +
\ipheat(\pt \heaterrh , \varphi_h)
+
\sbilheat( \heaterrh ,  \varphi_h)
-
\coupopfullformh( \varphi_h , \waveerrh)
\\
= \,
& 
\Delta p (\pt \wavevar , y_h ) 
+
\Delta_{s,b} ( \wavevar, \heatvar,y_h ) 
-
\Delta f (y_h)
+
\widetilde{p} (\pt \heatvar , \varphi_h ) 
+
\widetilde{\Delta}_{s,b} ( \wavevar, \heatvar,\varphi_h  ) 
-
\Delta g(\varphi_h) .
\end{align*}
Testing with $(y_h,\varphi_h) = (\waveerrh,\heaterrh)$, using \eqref{eq:rel_sbilwaveh_pos}, integrating in time, and finally a weighted Gronwall argument yields
the claim. 
\end{proof}

\section{Full discretization}
\label{sec:full}

Since we aim to apply a leapfrog-type method to the wave system, we need to exploit again the two-field structure of the problem and assume that also the discrete bilinear form $\sbilwaveh$ can be decomposed as 
\begin{equation*} 
    \sbilwavehskew(x_h,y_h) = \sbilwavehOne(x_{h,1},y_{h,2}) - \sbilwavehTwo(x_{h,2},y_{h,1}),
\end{equation*}
and we have the representation
\begin{equation*} 
    \sbilwavehpos(x_h,y_h) = \sbilwavehposOne(x_{h,1},y_{h,1}) + \sbilwavehposTwo(x_{h,2},y_{h,2})
\end{equation*}
for the stabilization.
Further, the coupling term is given by
\begin{equation*} 
   \coupopfullformh(\heatvarh, y_h)  =    \coupopformh(\heatvarh, y_{h,1}) ,
\end{equation*}
see \eqref{eq:coup_f_decomp}. With this, we can formulate our problem equivalently as
\begin{subequations}  
\begin{align}
 \ipwaveOne(\pt \wavevarhOne, y_{h,1})
-
\sbilwavehTwo(\wavevarhTwo, y_{h,1})
&=
 - \coupopformh(\heatvarh, y_{h,1}) +
\ipwaveOne(f_{h,1},y_{h,1})
\\
 \ipwaveTwo(\pt \wavevarhTwo, y_{h,2})
+
\sbilwavehOne(\wavevarhOne, y_{h,2})
&=
0
\\
\ipheat(\pt \heatvarh, \varphi_h)
+
\sbilheat(\heatvarh, \varphi_h)
&= 
\coupopformh(\varphi_h, \wavevarhOne) +
\ipheat(g_h,\varphi_h)
\end{align}
\end{subequations}
or in the strong form as
\begin{subequations}\label{eq:SpatialDiscreteStrongForm} 
\begin{align}
 \pt \wavevarhOne
-
\waveophTwo \wavevarhTwo + \waveophposOne \wavevarhOne
&=
 - \coupoph \heatvarh + f_{h,1} 
\label{eq:SpatialDiscreteStrongFormWaveOne}\\
 \pt \wavevarhTwo
+
\waveophOne \wavevarhOne + \waveophposTwo \wavevarhTwo
&=
0
\label{eq:SpatialDiscreteStrongFormWaveTwo}\\
\pt \heatvarh
+
\heatoph \heatvarh
&= 
\coupoph^*  \wavevarhOne
 +
g_h ,
\label{eq:SpatialDiscreteStrongFormHeat}
\end{align}
\end{subequations}
which is the starting point for our time integration scheme.

\subsection{Derivation of the method}

In the following, we denote the time step size as $\tau > 0$ and define the times 
$t_n = n \tau$, $n\geq 0$.
In order to see the structural similarity of our fully discrete method,
we formulate the system as a single first-order evolution equation
with variable 
$\colvar = \bigl( \wavevarOne, \wavevarTwo, \heatvar \bigr)$
in the form
\begin{equation}\label{eq:EvolutionEquation}
    \pt \colvar(t)  
        + \colop \colvar(t)
        =   \colrhs(t),
    \qquad
    \colop =
    \begin{pmatrix}
        0 & -\waveopTwo & \coupop\\
        \waveopOne & 0 & 0 \\
        -\coupop^\ast & 0 & \heatop
    \end{pmatrix},\,\,\,
    \qquad
    \colrhs = 
    \begin{pmatrix}
        f_1 \\ 0 \\ g
    \end{pmatrix} \,.
\end{equation}
Further, we introduce the fully discrete approximations and their collections 
\begin{equation*}
    \wavevarhnOne{n} \approx \wavevarhOne(t_n),
    \quad
    \wavevarhnTwo{n} \approx \wavevarhTwo(t_n),
    \quad
    \heatvarhn{n} \approx \heatvarh(t_n),
   \quad
    \colvarhn{n} = \bigl( \wavevarhnOne{n}, \wavevarhnTwo{n} ,  \heatvarhn{n} \bigr).
\end{equation*}
We derive our time stepping scheme in two steps.
Assume for a moment that the field \(\heatvarh\) is known at all time steps.
The well-known leapfrog scheme for the wave equation described by \eqref{eq:SpatialDiscreteStrongFormWaveOne} and \eqref{eq:SpatialDiscreteStrongFormWaveTwo} in half-step formulation (staggered) then reads
\begin{subequations}\label{eq:LeapfrogHalfStep}
    \begin{align}
        \wavevarhnOne{\nhalf}
            &= \wavevarhnOne{n} 
                + \tauhalf \waveophTwo \wavevarhnTwo{n}
                - \tauhalf \waveophposOne \wavevarhnOne{n}
                -\tauhalf \coupoph \heatvarhn{n}
                + \tauhalf f_{h,1}^n,
            \label{eq:LeapfrogHalfStep:One}\\
        \wavevarhnTwo{n+1}
            &= \wavevarhnTwo{n} 
                - \tau \waveophOne \wavevarhnOne{\nhalf}
                - \tau \waveophposTwo \wavevarhnTwo{n},
            \label{eq:LeapfrogHalfStep:Two}\\
        \wavevarhnOne{n+1}
            &= \wavevarhnOne{\nhalf}
                + \tauhalf \waveophTwo \wavevarhnTwo{n+1}
                - \tauhalf \waveophposOne \wavevarhnOne{n}
                -\tauhalf \coupoph \heatvarhn{n+1}
                + \tauhalf f_{h,1}^{n+1}.
            \label{eq:LeapfrogHalfStep:Three}
    \end{align}
\end{subequations}
Here, \(\wavevarhnOne{\nhalf} \approx \frac{1}{2}\bigl(\wavevarhnOne{n+1} + \wavevarhnOne{n}\bigr)\) is a second order in \(\tau\) approximation to the mean value.
Since the stabilization terms break the Hamiltonian structure of the discrete wave equation, it is common practice to apply them in such a way that the scheme stays explicit.
Note that \eqref{eq:LeapfrogHalfStep:One} and \eqref{eq:LeapfrogHalfStep:Two} are explicitly computable from known values at time \(t = t_n\).
Step \eqref{eq:LeapfrogHalfStep:Three}, on the other hand, is not explicitly computable since it demands the yet unknown value \(\heatvarhn{n+1}\).
In hope to omit any CFL condition from the heat part, we introduce a \CN step for \eqref{eq:SpatialDiscreteStrongFormHeat}, which reads
\begin{equation}\label{eq:CNStepHeat}
    \heatvarhn{n+1} 
        = \heatvarhn{n} 
        - \tauhalf \heatoph\bigl(\heatvarhn{n+1} + \heatvarhn{n}\bigr)
        + \tauhalf \coupoph^\ast\bigl(\wavevarhnOne{n+1} + \wavevarhnOne{n}\bigr)
        + \tauhalf \bigl(g_h^{n+1} + g_h^{n}\bigr).
\end{equation}
The average of \(\wavevarhOne\) in \eqref{eq:CNStepHeat} makes the coupling of \eqref{eq:CNStepHeat} and \eqref{eq:LeapfrogHalfStep:Three} implicit, which we want to avoid.
Circumventing this, we replace the average with the already calculated half-step \(\wavevarhnOne{\nhalf}\) and obtain the approximation
\begin{equation}\label{eq:CNStepHeatApprox}
    \heatvarhn{n+1} 
        \approx \heatvarhn{n} 
        - \tauhalf \heatoph\bigl(\heatvarhn{n+1} + \heatvarhn{n}\bigr)
        + \tau\coupoph^\ast\wavevarhnOne{\nhalf}
        + \tauhalf \bigl(g_h^{n+1} + g_h^{n}\bigr).
\end{equation}
Since the half-step is a second order in \(\tau\) approximation of the average, we expect the scheme described by the coupling of \eqref{eq:LeapfrogHalfStep} and \eqref{eq:CNStepHeatApprox} to keep its good properties and convergence rate.

Summarizing this, our proposed time stepping scheme reads
\begin{subequations}\label{eq:LFCN}
    \begin{align}
                \wavevarhnOne{\nhalf}
            &= \wavevarhnOne{n} 
                + \tauhalf \waveophTwo \wavevarhnTwo{n}
                - \tauhalf \waveophposOne \wavevarhnOne{n}
                -\tauhalf \coupoph \heatvarhn{n}
                + \tauhalf f_{h,1}^n,
            \label{eq:LFCN:One}\\
        \wavevarhnTwo{n+1}
            &= \wavevarhnTwo{n} 
                - \tau \waveophOne \wavevarhnOne{\nhalf}
                - \tau \waveophposTwo \wavevarhnTwo{n},
            \label{eq:LFCN:Two}\\
        \heatvarhn{n+1}
            &= \heatvarhn{n} 
            - \tauhalf \heatoph\bigl(\heatvarhn{n+1} + \heatvarhn{n}\bigr)
            + \tau\coupoph^\ast\wavevarhnOne{\nhalf}
            + \tauhalf \bigl(g_h^{n+1} + g_h^{n}\bigr)
            \label{eq:LFCN:Three},\\
        \wavevarhnOne{n+1}
            &= \wavevarhnOne{\nhalf}
                + \tauhalf \waveophTwo \wavevarhnTwo{n+1}
                - \tauhalf \waveophposOne \wavevarhnOne{n}
                -\tauhalf \coupoph \heatvarhn{n+1}
                + \tauhalf f_{h,1}^{n+1}.
            \label{eq:LFCN:Four}
    \end{align}
\end{subequations}
The rest of this manuscript is devoted to proving stability and convergence of the scheme, and to applying the results to our model examples.

\subsection{Stability}

We prove stability by rewriting \eqref{eq:LFCN} as a perturbed evolution equation, deploying energy estimates and closing with a Gronwall-type argument.
Let \( \{\alpha_n\}_{n\geq 0} \subset \mathbb{R}^\ell\) be a sequence for given \(\ell \geq 1\).
We define the average and discrete derivative of the series as
\begin{equation*}
    \tauavg{\alpha^{n+1}} = \frac{\alpha^{n+1} + \alpha^n}{2},\quad
        \dtau \alpha^{n+1} = \frac{\alpha^{n+1} - \alpha^{n}}{\tau},\qquad 
        n\geq 0.
\end{equation*}
We derive the perturbed evolution equation of \eqref{eq:LFCN}.
Subtraction of \eqref{eq:LFCN:Four} from \eqref{eq:LFCN:One} and rearranging gives
\begin{equation}\label{eq:HalfStepRepresentation}
    \wavevarhnOne{\nhalf} 
        = \tauavg{\wavevarhnOne{n+1}}
            - \tausqf \waveophTwo \dtau \wavevarhnTwo{n+1}
            + \tausqf \coupoph \dtau \heatvarhn{n+1}
            - \tausqf \dtau f_{h,1}^{n+1}.
\end{equation}
Addition of \eqref{eq:LFCN:One} and \eqref{eq:LFCN:Four} on the other hand shows
\begin{equation}\label{eq:LFCN:One:Representation}
    \dtau \wavevarhnOne{n+1} 
            -\waveophTwo \tauavg{\wavevarhnTwo{n+1}}
            + \waveophposOne \wavevarhnOne{n}
            + \coupoph \tauavg{\heatvarhn{n+1}}
            = \tauavg{f_{h,1}^{n+1}}.
\end{equation}
Insertion of \eqref{eq:HalfStepRepresentation} in \eqref{eq:LFCN:Two} gives
\begin{equation}\label{eq:LFCN:Two:Representation}
\begin{multlined}
    \dtau \wavevarhnTwo{n+1} 
        + \waveophOne \tauavg{\wavevarhnOne{n+1}} + \waveophposTwo \wavevarhnTwo{n}\\
            = \tausqf \waveophOne\waveophTwo \dtau \wavevarhnTwo{n+1}
            - \tausqf \waveophOne \coupoph \dtau \heatvarhn{n+1}
            + \tausqf  \waveophOne \dtau f_{h,1}^{n+1}.
\end{multlined}
\end{equation}
Again, insertion of \eqref{eq:HalfStepRepresentation} in \eqref{eq:LFCN:Three} shows
\begin{equation}\label{eq:LFCN:Four:Representation}
\begin{multlined}
    \dtau \heatvarhn{n+1} 
        + \heatoph \tauavg{\heatvarhn{n+1}}
            - \coupoph^\ast \tauavg{\wavevarhnOne{n+1}}\\
            = \tauavg{g_h^{n+1}} 
            -\tausqf \coupoph^\ast \waveophTwo \dtau \wavevarhnTwo{n+1}
            + \tausqf \coupoph^\ast \coupoph \dtau \heatvarhn{n+1}
            - \tausqf  \coupoph^\ast \dtau f_{h,1}^{n+1}.
\end{multlined}
\end{equation}
Gathering \eqref{eq:LFCN:One:Representation}, \eqref{eq:LFCN:Two:Representation} and \eqref{eq:LFCN:Four:Representation}, we obtain the perturbed discrete evolution equation for the collection \(\colvarhn{n} = \bigl(\wavevarhnOne{n},\wavevarhnTwo{n},\heatvarhn{n}\bigr)\)
\begin{equation}\label{eq:DiscreteEvolutionEquation}
    \bigl(1 - \tauhalf\colstaboph - \tausqf \colpertoph \bigr)\dtau \colvarhn{n+1} 
        + \bigl(\coloph + \colstaboph\bigr)\tauavg{\colvarhn{n+1}}
        =   \tauavg{\colrhshn{n+1}}
            - \tausqf \colpertrhsoph \dtau f_{h,1}^{n+1},\quad n\geq 0,
\end{equation}
where \(\colrhshn{n} = \bigl(f_{h,1}^{n}, 0, g_h^{n}\bigr)\) and
\begin{gather}
    \coloph =
    \begin{pmatrix}
        0 & -\waveophTwo & \coupoph\\
        \waveophOne  & 0 & 0 \\
        -\coupoph^\ast & 0 & \heatoph
    \end{pmatrix},\,\,\,
    \colstaboph = 
    \begin{pmatrix}
        \waveophposOne & 0 & 0\\
        0 & \waveophposTwo & 0 \\
        0 & 0 & 0
    \end{pmatrix},
    \label{eq:DiscreteOperators:EvolutionOph}\\
    \colpertoph = 
    \begin{pmatrix}
        0 & 0 & 0\\
        0 & \waveophOne\waveophTwo & -\waveophOne \coupoph\\
        0 & -\coupoph^\ast \waveophTwo & \coupoph^\ast \coupoph
    \end{pmatrix},\,\,\,
    \colpertrhsoph = 
    \begin{pmatrix}
        0 \\ -\waveophOne \\ \coupoph^\ast
    \end{pmatrix}  ,
    \label{eq:DiscreteOperators:Perturbations}
\end{gather}
and the application of $\colpertrhsoph$ is understood component wise. 

\begin{lemma}\label{lem:ColophAccretive}
    Let Assumptions~\ref{ass:cont_forms} and \ref{ass:discr_forms_skew_stab} hold.
    
    \noindent(a) 
            The operator \(\coloph\,:\, \wavespaceVh \times \heatspaceVh \to \wavespaceVh \times \heatspaceVh\) is accretive, i.e.,
            \[
                \ipcol{\coloph \coltestvarh}{\coltestvarh} \geq 0,\quad \forall \coltestvarh \in \wavespaceVh \times \heatspaceVh.
            \]
    
    \noindent(b)    
            Let $\coltestvarh , \coltestvarhVTwo\in \wavespaceVh \times \heatspaceVh$. If the last component of $\coltestvarh$ or $\coltestvarhVTwo$ is zero, then
\begin{equation} \label{eq:cond_skew_adj}
\ipcol{\coloph \coltestvarh}{\coltestvarhVTwo}
=
-
\ipcol{ \coltestvarh}{\coloph \coltestvarhVTwo} .
\end{equation} 
\end{lemma}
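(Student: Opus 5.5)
The plan is to write everything in weak form and use the definitions of the discrete strong operators. By construction, $\waveophOne,\waveophTwo,\coupoph,\heatoph$ are the operators induced by the forms through the inner products:
\begin{align*}
\ipwaveTwo(\waveophOne x_{h,1}, y_{h,2}) &= \sbilwavehOne(x_{h,1},y_{h,2}), \qquad \ipwaveOne(\waveophTwo x_{h,2}, y_{h,1}) = \sbilwavehTwo(x_{h,2},y_{h,1}),\\
\ipwaveOne(\coupoph \varphi_h, y_{h,1}) &= \coupopformh(\varphi_h,y_{h,1}), \qquad \ipheat(\coupoph^\ast y_{h,1},\varphi_h) = \coupopformh(\varphi_h,y_{h,1}),\\
\ipheat(\heatoph\theta_h,\varphi_h) &= \sbilheat(\theta_h,\varphi_h).
\end{align*}
Here the stabilization is \emph{not} part of $\coloph$; it sits in $\colstaboph$.

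For $\coltestvarh=(v_1,v_2,\psi)$ and $\coltestvarhVTwo=(w_1,w_2,\varphi)$, we expand the product inner product $\ipcol{\cdot}{\cdot}=\ipwaveOne+\ipwaveTwo+\ipheat$ componentwise. This gives
\begin{align*}
\ipcol{\coloph \coltestvarh}{\coltestvarhVTwo}
={}& -\sbilwavehTwo(v_2,w_1) + \coupopformh(\psi,w_1) + \sbilwavehOne(v_1,w_2)\\
&- \coupopformh(\varphi,v_1) + \sbilheat(\psi,\varphi).
\end{align*}
The first and third terms together are exactly $\sbilwavehskew(\coltestvarh_{\mathrm{wave}},\coltestvarhVTwo_{\mathrm{wave}})$ by the assumed splitting of $\sbilwavehskew$.

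For (a), set $\coltestvarhVTwo=\coltestvarh$. The two coupling terms $\coupopformh(\psi,v_1)-\coupopformh(\psi,v_1)$ cancel. The skew term vanishes by Assumption~\ref{ass:discr_forms_skew_stab}(a), i.e. $\sbilwavehskew(y_h,y_h)=0$. What remains is $\sbilheat(\psi,\psi)\ge 0$ by Assumption~\ref{ass:cont_forms}(b), which gives accretivity.

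For (b), I would first derive full skew-symmetry of $\sbilwavehskew$ on $\wavespaceVh$ from $\sbilwavehskew(y_h,y_h)=0$ by polarization, i.e. expanding $\sbilwavehskew(y_h+z_h,y_h+z_h)=0$. This gives $\sbilwavehOne(v_1,w_2)-\sbilwavehTwo(v_2,w_1) = -\bigl(\sbilwavehOne(w_1,v_2)-\sbilwavehTwo(w_2,v_1)\bigr)$. Next, I compute $\ipcol{\coltestvarh}{\coloph\coltestvarhVTwo}$ by the same expansion with the roles swapped, obtaining the skew wave part with arguments exchanged, the coupling terms $\coupopformh(\varphi,v_1)-\coupopformh(\psi,w_1)$, and $\sbilheat(\varphi,\psi)$. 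Adding the two expressions, the wave parts cancel by skew-symmetry and the coupling terms cancel pairwise. The remainder is $\sbilheat(\psi,\varphi)+\sbilheat(\varphi,\psi)$, which is zero whenever $\psi=0$ or $\varphi=0$ by bilinearity. This is exactly why the hypothesis on the last component is needed, since $\sbilheat$ is neither symmetric nor skew.

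The only delicate point I expect is bookkeeping: checking that the signs in $\coloph$, namely $-\waveophTwo$, $+\coupoph$ and $-\coupoph^\ast$, match the weak forms so that the coupling terms genuinely cancel, and that the adjoint $\coupoph^\ast$ is taken with respect to $\ipwaveOne$ and $\ipheat$. Once the componentwise identity above is written down, both parts are immediate.
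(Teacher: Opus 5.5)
Your proof is correct and follows the paper's argument: part (a) is the same componentwise expansion, with the coupling terms cancelling, $\sbilwavehskew(y_h,y_h)=0$ removing the wave part, and $\sbilheat(\varphi_h,\varphi_h)\geq 0$ giving the sign. For (b) the paper only says it is a direct consequence of Assumption~\ref{ass:discr_forms_skew_stab}; your polarization step and your observation that only $\sbilheat(\psi,\varphi)+\sbilheat(\varphi,\psi)$ survives are precisely the details the paper leaves implicit.
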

\begin{proof}(a)
    Let \(\coltestvarh = \bigl(y_{h,1},y_{h,2},\varphi_h\bigr) \in \wavespaceVh\times \heatspaceVh\).
    By definition, we obtain
    \begin{align*}
        \ipcol{\coloph \coltestvarh}{\coltestvarh}
            &= 
            \begin{multlined}[t]
                \ipwaveOne({-\waveophTwo y_{h,2}},{y_{h,1}})        
                    + \ipwaveOne({\coupoph \varphi_h},{y_{h,1}})
                    + \ipwaveTwo({\waveophOne y_{h,1}},{y_{h,2}})\\
                    + \ipheat({-\coupoph^\ast y_{h,1}},{\varphi_h}) + \ipheat({\heatoph \varphi_h},{\varphi_h})
            \end{multlined}\\
            &= 
            \begin{multlined}[t]
                \sbilwavehOne({y_{h,1}},{y_{h,2}})
                -\sbilwavehTwo({y_{h,2}},{y_{h,1}}) 
                + \sbilheat({\varphi_h},{\varphi_h})\\
                + \coupopformh(\varphi_h,y_{h,1}) - \coupopformh(\varphi_h, y_{h,1})
            \end{multlined}\\ 
            &= \sbilwavehskew(y_h,y_h)
                + \sbilheat(\varphi_h,\varphi_h)\\
            &= \sbilheat(\varphi_h,\varphi_h)\\
            &\geq 0,
    \end{align*}
    where \(y_h = (y_{h,1},y_{h,2})\) and Assumption~\ref{ass:cont_forms} and \ref{ass:discr_forms_skew_stab} were used in the last inequality.
    The second claim (b) is a direct consequence of Assumption~\ref{ass:discr_forms_skew_stab}.
\end{proof}

In the following, we frequently use the discrete version of differentiation of a square norm.
Let \(\{\alpha^{n}\}_{n\geq 0} \subset \mathbb{R}^{\ell}\), for \(\ell \geq 1\), and \(|\cdot|_{\ell}\) denote the Euclidean norm.
It holds that
\begin{equation}\label{eq:DiscreteFundamental}
    \dtau \frac{1}{2} |\alpha^n|_{\ell}^2
    = \frac{1}{2\tau}\bigl(
        |\alpha^{n+1}|_\ell^2 - |\alpha^{n}|_\ell^2
        \bigr) 
    = \dtau \alpha^n \cdot \tauavg{\alpha^{n}}.
\end{equation}

We obtain the following equality for the perturbation.
\begin{lemma}\label{lem:DiscretePertEstimate}
    Let \(\{\coltestvarhn{n}\}_{n\geq 0} \subset \wavespaceVh \times \heatspaceVh\). It holds for \(n\geq 0\)
    \[
        \frac{\tau^2}{4} \ipcol{\colpertoph \dtau \coltestvarhn{n+1}}{\tauavg{\coltestvarhn{n+1}}}
        = \frac{\tau}{8}\Bigl(
            \normwaveOne{\waveophTwo \wavevarhnTwo{n+1} - \coupoph \heatvarhn{n+1}}^2
            -\normwaveOne{\waveophTwo \wavevarhnTwo{n} - \coupoph \heatvarhn{n}}^2
        \Bigr) .
    \]
\end{lemma}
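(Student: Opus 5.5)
The identity should follow from a factorization of \(\colpertoph\) combined with the discrete product rule \eqref{eq:DiscreteFundamental}. Write \(\coltestvarhn{n} = (\wavevarhnOne{n},\wavevarhnTwo{n},\heatvarhn{n})\) to match the notation of the right-hand side. Introduce the auxiliary sequence
\[
    w^n = \waveophTwo \wavevarhnTwo{n} - \coupoph \heatvarhn{n} \in \wavespacehOne .
\]
The key observation is that \(\colpertoph\) is, up to the sign convention, of the form \(M^\ast M\), where \(M \coltestvarh = \waveophTwo y_{h,2} - \coupoph \varphi_h\). I would check this entry by entry.

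For the second component I use the adjoint relation between \(\waveophOne\) and \(\waveophTwo\). By the definitions of the strong-form operators via \(\sbilwavehOne\) and \(\sbilwavehTwo\), together with the skew-symmetry \(\sbilwavehskew(y_h,y_h)=0\) from Assumption~\ref{ass:discr_forms_skew_stab}, we have \(\sbilwavehOne(x,y)=\sbilwavehTwo(y,x)\). Hence \(\ipwaveTwo(\waveophOne x_1, y_2) = \ipwaveOne(x_1, \waveophTwo y_2)\), that is, \(\waveophOne = \waveophTwo^\ast\) with respect to \(\ipwaveOne,\ipwaveTwo\). The same relation is already used implicitly in the proof of Lemma~\ref{lem:ColophAccretive}. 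The third component is handled with \(\coupoph^\ast\), the adjoint of \(\coupoph\) between \(\ipheat\) and \(\ipwaveOne\).

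Computing the pairing, only the second and third components contribute:
\begin{align*}
    \ipcol{\colpertoph \dtau \coltestvarhn{n+1}}{\tauavg{\coltestvarhn{n+1}}}
    &= \ipwaveTwo\bigl(\waveophOne \dtau w^{n+1}, \tauavg{\wavevarhnTwo{n+1}}\bigr)
       - \ipheat\bigl(\coupoph^\ast \dtau w^{n+1}, \tauavg{\heatvarhn{n+1}}\bigr)
    \\
    &= \ipwaveOne\bigl(\dtau w^{n+1}, \waveophTwo\tauavg{\wavevarhnTwo{n+1}} - \coupoph \tauavg{\heatvarhn{n+1}}\bigr)
     = \ipwaveOne\bigl(\dtau w^{n+1}, \tauavg{w^{n+1}}\bigr).
\end{align*}
This uses linearity of \(\dtau\) and of the averaging. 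Applying \eqref{eq:DiscreteFundamental} in the \(\ipwaveOne\)-inner product gives \(\frac{1}{2\tau}(\normwaveOne{w^{n+1}}^2-\normwaveOne{w^n}^2)\). Multiplying by \(\tau^2/4\) then yields exactly the factor \(\tau/8\) in the statement.

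The only delicate point is the sign and adjoint bookkeeping in the second row of \(\colpertoph\), namely confirming that \(\waveophOne\waveophTwo\) and \(-\waveophOne\coupoph\) combine to \(\waveophTwo^\ast w\). Everything else is the routine polarization identity. I would therefore first verify \(\waveophOne=\waveophTwo^\ast\) carefully from the decomposition of \(\sbilwavehskew\) and the sign conventions in \eqref{eq:SpatialDiscreteStrongForm}.
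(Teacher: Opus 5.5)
Your proposal is correct and follows the paper's own argument: the paper expands the pairing into the four nonzero entries of $\colpertoph$, moves $\waveophOne$ and $\coupoph^\ast$ across by adjointness to arrive at $\ipwaveOne(\dtau w^{n+1},\tauavg{w^{n+1}})$ with $w^n = \waveophTwo \wavevarhnTwo{n} - \coupoph \heatvarhn{n}$, and then applies \eqref{eq:DiscreteFundamental}. Grouping the rows first via the factorization $\colpertoph = M^\ast M$ (which in fact holds exactly, with no sign adjustment needed) only reorders the same computation.
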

\begin{proof}
    We use the definition in \eqref{eq:DiscreteOperators:Perturbations}, and compute%
    \begin{align*}
        \ipcol{\colpertoph \dtau \coltestvarhn{n+1}}{\tauavg{\coltestvarhn{n+1}}}
        &=  
            \begin{multlined}[t]
                \ipwaveTwo(\waveophOne \waveophTwo \dtau \wavevarhnTwo{n+1}, \tauavg{\wavevarhnTwo{n+1}})
                - \ipwaveTwo(\waveophOne \coupoph \dtau \heatvarhn{n+1}, \tauavg{\heatvarhn{n+1}})\\
                - \ipheat(\coupoph^\ast \waveophTwo \dtau \wavevarhnTwo{n+1}, \tauavg{\heatvarhn{n+1}})
                + \ipheat(\coupoph^\ast \coupoph \dtau \heatvarhn{n+1}, \tauavg{\heatvarhn{n+1}})
            \end{multlined}\\
        &=
            \begin{multlined}[t]
                \ipwaveOne(\waveophTwo \dtau \wavevarhnTwo{n+1}, \waveophTwo \tauavg{\wavevarhnTwo{n+1}})
                - \ipwaveOne(\coupoph \dtau \heatvarhn{n+1}, \waveophTwo\tauavg{\wavevarhnTwo{n+1}})\\
                - \ipwaveOne(\waveophTwo \dtau \wavevarhnTwo{n+1}, \coupoph \tauavg{\heatvarhn{n+1}})
                + \ipwaveOne(\coupoph \dtau \heatvarhn{n+1}, \coupoph \tauavg{\heatvarhn{n+1}})
            \end{multlined}\\
        &=
            \begin{multlined}[t]
                \ipwaveOne(
                    \waveophTwo\dtau \wavevarhnTwo{n+1} - \coupoph \dtau \heatvarhn{n+1},
                    \waveophTwo \tauavg{\wavevarhnTwo{n+1}} - \coupoph \tauavg{\heatvarhn{n+1}}    
                )
            \end{multlined}\\
        &=
            \frac{1}{2\tau}\Bigl(
                \normwaveOne{\waveophTwo \wavevarhnTwo{n+1} - \coupoph \heatvarhn{n+1}}^2
                -\normwaveOne{\waveophTwo \wavevarhnTwo{n} - \coupoph \heatvarhn{n}}^2
            \Bigr),
    \end{align*}
    where we employed \eqref{eq:DiscreteFundamental} in the last step.
\end{proof}

This allows us to conclude the following energy inequality for some general right-hand side. Later on, this estimate will also be used for the error recursion and the defects as right-hand sides.
\begin{lemma} \label{lem:stab_1}
    Let \(K^{n+1}_h = (k_{h,1}^{n+1}, k_{h,2}^{n+1}, \kappa_h^{n+1}) \in \wavespaceh \times \heatspaceh\) for \(n\geq 0\).
    Then, the recursion
    \begin{equation}\label{eq:GenericRecursion}
        \bigl(1 -\tauhalf \colstaboph - \tausqf \colpertoph\bigr)\dtau \colvarhn{n+1} 
            + \bigl(\coloph + \colstaboph\bigr) \tauavg{\colvarhn{n+1}}
        =   
            {K_{h}^{n+1}}
            ,\quad n\geq 0,
    \end{equation}
    satisfies for \(n\geq 0\) the estimate
    \begin{equation}\label{eq:GenericRecursionEstimate}
    \begin{multlined}
        \normcol{\colvarhn{n+1}}^2
        - \tauhalf \stabnormwave{\wavevarhn{n+1}}^2
        - \tausqf \normwaveOne{\waveophTwo \wavevarhnTwo{n+1} - \coupoph \heatvarhn{n+1}}^2
        + 2\tau \stabnormwave{\tauavg{\wavevarhn{n+1}}}^2
        \\ 
        \leq
        \normcol{\colvarhn{n}}^2 
        - \tauhalf \stabnormwave{\wavevarhn{n}}^2
        - \tausqf \normwaveOne{\waveophTwo \wavevarhnTwo{n} - \coupoph \heatvarhn{n}}^2
        + 2\tau \ipcol{{K_h^{n+1}}}{\tauavg{\colvarhn{n+1}}}.
    \end{multlined}
    \end{equation}
\end{lemma}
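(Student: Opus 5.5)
The plan is to test the recursion \eqref{eq:GenericRecursion} in the product inner product $\ipcol{\cdot}{\cdot}$ with $2\tau\,\tauavg{\colvarhn{n+1}}$. Each of the four resulting terms is then treated separately by means of the discrete identity \eqref{eq:DiscreteFundamental}, Lemma~\ref{lem:DiscretePertEstimate} and Lemma~\ref{lem:ColophAccretive}. Throughout, the discrete operators are understood via the Riesz representation in the inner products $\ipwaveOne,\ipwaveTwo,\ipheat$. In particular, this gives $\ipcol{\colstaboph \coltestvarh}{\coltestvarhVTwo} = \sbilwavehpos(y_h,w_h)$, where $y_h,w_h$ denote the wave components of $\coltestvarh,\coltestvarhVTwo$.

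First, the time-derivative term gives
\[
2\tau\ipcol{\dtau\colvarhn{n+1}}{\tauavg{\colvarhn{n+1}}}=\normcol{\colvarhn{n+1}}^2-\normcol{\colvarhn{n}}^2,
\]
exactly as in \eqref{eq:DiscreteFundamental} applied in the Hilbert space norm. For the stabilization perturbation I use the symmetry of $\sbilwavehpos$ from Assumption~\ref{ass:discr_forms_skew_stab}(a). This is the one point where that assumption is really needed, since it makes $\sbilwavehpos$ a symmetric positive semidefinite bilinear form, so the same polarization argument applies. Hence
\[
\sbilwavehpos(\dtau\wavevarhn{n+1},\tauavg{\wavevarhn{n+1}})=\tfrac{1}{2\tau}\bigl(\stabnormwave{\wavevarhn{n+1}}^2-\stabnormwave{\wavevarhn{n}}^2\bigr),
\]
and therefore
\[
-2\tau\cdot\tauhalf\ipcol{\colstaboph\dtau\colvarhn{n+1}}{\tauavg{\colvarhn{n+1}}}=-\tauhalf\bigl(\stabnormwave{\wavevarhn{n+1}}^2-\stabnormwave{\wavevarhn{n}}^2\bigr).
\]

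Next, multiplying Lemma~\ref{lem:DiscretePertEstimate} by $-2\tau$ turns the $\colpertoph$-term into
\[
-\tausqf\bigl(\normwaveOne{\waveophTwo \wavevarhnTwo{n+1} - \coupoph \heatvarhn{n+1}}^2-\normwaveOne{\waveophTwo \wavevarhnTwo{n} - \coupoph \heatvarhn{n}}^2\bigr),
\]
which is precisely the telescoping structure appearing in \eqref{eq:GenericRecursionEstimate}. Finally, the operator term splits as
\[
2\tau\ipcol{(\coloph+\colstaboph)\tauavg{\colvarhn{n+1}}}{\tauavg{\colvarhn{n+1}}}
=2\tau\ipcol{\coloph\tauavg{\colvarhn{n+1}}}{\tauavg{\colvarhn{n+1}}}+2\tau\stabnormwave{\tauavg{\wavevarhn{n+1}}}^2 .
\]
The first summand is nonnegative by Lemma~\ref{lem:ColophAccretive}(a), so it can be dropped to obtain an inequality. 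The right-hand side becomes $2\tau\ipcol{K_h^{n+1}}{\tauavg{\colvarhn{n+1}}}$.

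Collecting the four contributions and moving all $n$-indexed quantities to the right yields \eqref{eq:GenericRecursionEstimate}. I expect no serious obstacle here. The only points requiring care are the correct signs and factors: the products $2\tau\cdot\tauhalf\cdot\frac{1}{2\tau}=\tauhalf$ and $2\tau\cdot\frac{\tau}{8}=\tausqf$. The other is checking that the identity for $\colstaboph$ uses only its wave block, which by the splitting $\sbilwavehpos=\sbilwavehposOne+\sbilwavehposTwo$ is exactly $\sbilwavehpos$ on the wave components.
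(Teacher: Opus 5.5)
Your proposal is correct and follows the paper's proof essentially step for step: test with $\tauavg{\colvarhn{n+1}}$, use \eqref{eq:DiscreteFundamental} for the time-difference and stabilization terms, Lemma~\ref{lem:DiscretePertEstimate} for the $\colpertoph$-term, Lemma~\ref{lem:ColophAccretive}(a) to drop the nonnegative contribution of $\coloph$, and then multiply by $2\tau$. Your bookkeeping of the factors $\tauhalf$ and $\tausqf$ is right, and your remark that the stabilization identity relies on the symmetry of $\sbilwavehpos$ from Assumption~\ref{ass:discr_forms_skew_stab}(a) makes explicit a point the paper leaves implicit.
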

\begin{proof}
    We test \eqref{eq:GenericRecursion} with \(\tauavg{\colvarhn{n+1}}\) and obtain for the first term
    and \eqref{eq:DiscreteFundamental} that
    \begin{equation*}
        \ipcol{\dtau \colvarhn{n+1}}{\tauavg{\colvarhn{n+1}}}
            = \frac{1}{2\tau} \Bigl(\normcol{\colvarhn{n+1}}^2 - \normcol{\colvarhn{n}}^2\Bigr).
    \end{equation*}
    For the second term we obtain again with \eqref{eq:DiscreteFundamental}
    \begin{equation*}
        \tauhalf\ipcol{\colstaboph \dtau \colvarhn{n+1}}{\tauavg{\colvarhn{n+1}}}
            = \frac{1}{4}\stabnormwave{\wavevarhn{n+1}}^2 - \frac{1}{4}\stabnormwave{\wavevarhn{n}}^2 .
    \end{equation*}
    The third term is already controlled by Lemma~\ref{lem:DiscretePertEstimate}, thus it remains to estimate the fourth term.
    With Lemma~\ref{lem:ColophAccretive}, we see that
    \begin{equation*}
        \stabnormwave{\tauavg{\wavevarhn{n+1}}}^2 
            = \ipcol{\colstaboph \tauavg{\colvarhn{n+1}}}{\tauavg{\colvarhn{n+1}}}
            \leq \ipcol{\bigl(\coloph + \colstaboph\bigr) \tauavg{\colvarhn{n+1}}}{\tauavg{\colvarhn{n+1}}}
    \end{equation*}
    Gathering all estimates and multiplication with \(2\tau\) proves the claim.
\end{proof}

We proceed to prove stability under the following CFL condition.

\begin{assumption}\label{ass:CFL}
    Given \(\eta \in (0,1)\) there exists \(\tauCFL>0\) such that for all \(\tau \leq\tauCFL\) and \(\colvarh = (\wavevarh, \heatvarh) \in \wavespaceVh \times \heatspaceVh\) it holds 
    \begin{equation}\label{eq:CFL}
        \frac{\tau^2}{2}\normwaveOne{\waveophTwo \wavevarhTwo}^2  
        + \frac{\tau^2}{2}\normwaveOne{\coupoph \heatvarh}^2
            +  \tau \stabnormwave{\wavevarh}^2 \leq \eta \normcol{\colvarh}^2.
    \end{equation}
\end{assumption}

We note that the assumption directly implies the bound
\begin{equation} \label{eq:CFL_for_stab}
    \tau \|\colstaboph \|  \leq \eta,
\end{equation}
which is used throughout the error analysis.

\begin{remark}
    Note that we expect the first two norms in \eqref{eq:CFL} to scale inversely with the minimal mesh size \(h_{\operatorname{min}}\), and the third with \(h_{\operatorname{min}}^{1/2}\).
    Thus, we consider a step-size restriction \(\tau \approx  h_{\operatorname{min}}\).
    Usually, the stabilization depends on a user-given parameter set.
    The second norm in \eqref{eq:CFL} is expected to scale inversely with these parameters.
    Hence, the CFL condition becomes slightly worse with stronger stabilization.
\end{remark}

\begin{theorem}\label{thm:StabilityGenericRecursion}
    Let Assumption~\ref{ass:CFL} hold.
    For the recursion \eqref{eq:GenericRecursion} it holds
    \begin{equation*}
        \begin{multlined}
            (1-\eta)\normcol{\colvarhn{n+1}}^2
                + \tauhalf \sum_{\ell = 0}^n \stabnormwave{{\wavevarhn{\ell+1}} + {\wavevarhn{\ell}}}^2\\
            \leq e^{3/2} \normcol{\colvarhn{0}}^2 
                + \frac{e^{3/2}T}{(1-\eta)} \tau \sum_{l=0}^{n+1} \normcol{K_h^{\ell +1}}^2.
        \end{multlined}
    \end{equation*}
\end{theorem}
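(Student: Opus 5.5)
Our plan is to sum the one-step energy inequality of Lemma~\ref{lem:stab_1} over the steps, control the modified energy from below with the CFL condition, and close with a discrete Gronwall argument. We define the modified energy
\[
\mathcal{E}^n = \normcol{\colvarhn{n}}^2 - \tauhalf \stabnormwave{\wavevarhn{n}}^2 - \tausqf \normwaveOne{\waveophTwo \wavevarhnTwo{n} - \coupoph \heatvarhn{n}}^2 .
\]
By $\|a-b\|^2\le 2\|a\|^2+2\|b\|^2$ we get $\tausqf \normwaveOne{\waveophTwo \wavevarhnTwo{n} - \coupoph \heatvarhn{n}}^2 \le \frac{\tau^2}{2}\normwaveOne{\waveophTwo\wavevarhnTwo{n}}^2+\frac{\tau^2}{2}\normwaveOne{\coupoph\heatvarhn{n}}^2$. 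Together with $\tauhalf|\cdot|^2\le\tau|\cdot|^2$, Assumption~\ref{ass:CFL} then gives the two-sided bound $(1-\eta)\normcol{\colvarhn{n}}^2\le \mathcal{E}^n\le \normcol{\colvarhn{n}}^2$.

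Next we rewrite \eqref{eq:GenericRecursionEstimate} as $\mathcal{E}^{\ell+1} + 2\tau\stabnormwave{\tauavg{\wavevarhn{\ell+1}}}^2 \le \mathcal{E}^\ell + 2\tau \ipcol{K_h^{\ell+1}}{\tauavg{\colvarhn{\ell+1}}}$. Since $2\tau|\tfrac12(a+b)|^2 = \tauhalf|a+b|^2$, the dissipation term is exactly the sum appearing in the claim. Summing from $\ell=0$ to $n$ telescopes to
\[
\mathcal{E}^{n+1}+\tauhalf\sum_{\ell=0}^n\stabnormwave{\wavevarhn{\ell+1}+\wavevarhn{\ell}}^2 \le \normcol{\colvarhn{0}}^2 + 2\tau\sum_{\ell=0}^n \ipcol{K_h^{\ell+1}}{\tauavg{\colvarhn{\ell+1}}} .
\]

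The right-hand side is where the care lies, namely in choosing Young's weight so that the constants $e^{3/2}$ and $T/(1-\eta)$ come out. We estimate $2\tau\ipcol{K}{\tauavg{\colvarh}}\le \tau\bigl(\tfrac{T}{1-\eta}\normcol{K}^2 + \tfrac{1-\eta}{T}\normcol{\tauavg{\colvarh}}^2\bigr)$. We then use $\normcol{\tauavg{\colvarhn{\ell+1}}}^2\le \tfrac12(\normcol{\colvarhn{\ell+1}}^2+\normcol{\colvarhn{\ell}}^2)$ and bound each $(1-\eta)\normcol{\colvarhn{k}}^2$ by $\mathcal{E}^k$. Setting $a_k=\max_{j\le k}\mathcal{E}^j$ gives the implicit recursion
\[
\mathcal{E}^{n+1}\le \normcol{\colvarhn{0}}^2+\frac{T\tau}{1-\eta}\sum\normcol{K}^2+\frac{\tau}{2T}\sum_{\ell=0}^{n}(\mathcal{E}^{\ell+1}+\mathcal{E}^\ell).
\]
The term $\tfrac{\tau}{2T}\mathcal{E}^{n+1}$ appears on the right, so it must be absorbed. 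This needs $\tau\le T$, which is harmless because for $\tau>T$ there is nothing to prove beyond one step. Alternatively one can treat it as in the cited Gronwall lemma of \cite{Emm99}.

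A discrete Gronwall inequality with step weight $\tau/T$ over at most $T/\tau$ steps then yields a factor of the form $(1-\tfrac{\tau}{2T})^{-n}(1+\tfrac{\tau}{2T})^{n}\le e^{3/2}$ or similar. We track the absorption constant so that the exponent stays at $3/2$ (e.g.\ using $(1-x)^{-1}\le e^{2x}$ for $x\le 1/2$). Finally, we keep the dissipation sum on the left throughout, which is legitimate since it is nonnegative and is simply carried along. Combining with $(1-\eta)\normcol{\colvarhn{n+1}}^2\le\mathcal{E}^{n+1}$ gives the claim.

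The main obstacle is bookkeeping this implicit Gronwall step so that exactly the constants $e^{3/2}$ and $T/(1-\eta)$ appear. Positivity of $\mathcal{E}^n$ via the CFL condition is the conceptual key; the rest is routine.
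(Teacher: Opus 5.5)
Your proposal is correct and follows the paper's proof: you sum the one-step inequality of Lemma~\ref{lem:stab_1}, use \eqref{eq:CFL} to bound the modified energy below by $(1-\eta)\normcol{\colvarhn{n}}^2$, apply Young's inequality with weight $T/(1-\eta)$, and close with a discrete Gronwall argument. The differences are only cosmetic. You run Gronwall on the modified energy instead of on $(1-\eta)\normcol{\colvarhn{n}}^2$, and you make the constant explicit via $(1+x)^{m}(1-x)^{-m}\le e^{3mx}\le e^{3/2}$ with $x=\tau/(2T)$ over at most $m\le T/\tau$ steps (this also forces $\tau\le T$ automatically), whereas the paper simply cites Emmrich's discrete Gronwall lemma.
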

\begin{proof}
    Summation in \eqref{eq:GenericRecursionEstimate} shows
    \begin{equation*}
    \begin{multlined}
        \normcol{\colvarhn{n+1}}^2
        + 2\tau \sum_{\ell = 0}^n \stabnormwave{\tauavg{\wavevarhn{\ell+1}}}^2
        \leq
        \normcol{\colvarhn{0}}^2
        + \tauhalf \stabnormwave{\wavevarhn{n+1}}^2\\
        + \tausqf \normwaveOne{\waveophTwo \wavevarhnTwo{n+1} - \coupoph \heatvarhn{n+1}}^2
        + 2\tau \sum_{\ell=0}^n \ipcol{{K_h^{\ell+1}}}{\tauavg{\colvarhn{\ell+1}}}.
    \end{multlined}
    \end{equation*}
    With \eqref{eq:CFL}, we conclude that
    \begin{equation*}
        (1-\eta) \normcol{\colvarhn{n+1}}^2
            + 2\tau \sum_{\ell = 0}^n \stabnormwave{\tauavg{\wavevarhn{\ell+1}}}^2
            \leq \normcol{\colvarhn{0}}^2
                + 2\tau \sum_{\ell=0}^n \ipcol{{K_h^{\ell+1}}}{\tauavg{\colvarhn{\ell+1}}}.
    \end{equation*}
    We estimate the remaining product with Young and obtain
    \begin{equation*}
        \begin{multlined}
            2\tau \sum_{\ell=0}^n \ipcol{{K_h^{\ell+1}}}{\tauavg{\colvarhn{\ell+1}}}
                \leq 
                 2 \frac{(1-\eta)}{2T} \tauhalf \sum_{\ell=0}^n
                 \Bigl(
                     \normcol{{\colvarhn{\ell+1}}}^2  
                    + \normcol{{\colvarhn{\ell}}}^2
                 \Bigr)
                \\
                    +\frac{T\tau}{(1-\eta)} \sum_{\ell=0}^n \normcol{K_h^{\ell+1}}^2.
        \end{multlined}
    \end{equation*}
    Therefore, we see that
    \begin{equation*}
        \begin{multlined}
            (1-\eta) \normcol{\colvarhn{n+1}}^2
            + \tauhalf \sum_{\ell = 0}^n \stabnormwave{{\wavevarhn{\ell+1}} + {\wavevarhn{\ell}}}^2\\
            \leq \normcol{\colvarhn{0}}^2
                + 2 \frac{(1-\eta)}{2T} \tauhalf \sum_{\ell=0}^n
                \Bigl(
                    \normcol{{\colvarhn{\ell+1}}}^2  
                    + \normcol{{\colvarhn{\ell}}}^2
                \Bigr)
                +\frac{T\tau}{(1-\eta)} \sum_{\ell=0}^n \normcol{K_h^{\ell+1}}^2.
        \end{multlined}
    \end{equation*}
    Finally, the application of a discrete Gronwall estimate 
    \cite[Proposition~4.1]{Emm99}
    proves the claim.
\end{proof}

Using the CFL condition \eqref{eq:CFL} and the special structure of the right-hand side, we may conclude the following stability estimate.

\begin{corollary}\label{cor:StabilityLFCN}
    Let Assumption~\ref{ass:CFL} hold. Then, the time integration scheme \eqref{eq:LFCN} is stable and
    \begin{equation*}
        \begin{multlined}
            (1-\eta)\normcol{\colvarhn{n+1}}^2
                + \tauhalf \sum_{\ell = 0}^n \stabnormwave{{\wavevarhn{\ell+1}} + {\wavevarhn{\ell}}}^2\\
            \leq e^{3/2} \normcol{\colvarhn{0}}^2 
                + \frac{2e^{3/2}T}{(1-\eta)} \tau \sum_{l=0}^{n+1} (1+\eta)\normwaveOne{f_{h,1}^\ell}^2 + \normheat{g_h^\ell}^2.
        \end{multlined}
    \end{equation*}
\end{corollary}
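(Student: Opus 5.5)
The scheme \eqref{eq:LFCN} is equivalent to the perturbed evolution equation \eqref{eq:DiscreteEvolutionEquation}. That equation is exactly the generic recursion \eqref{eq:GenericRecursion} with right-hand side
\[
K_h^{n+1} = \tauavg{\colrhshn{n+1}} - \tausqf \colpertrhsoph \dtau f_{h,1}^{n+1}.
\]
Theorem~\ref{thm:StabilityGenericRecursion} therefore applies directly, and it only remains to bound $\normcol{K_h^{n+1}}^2$ in terms of $\normwaveOne{f_{h,1}^\ell}^2$ and $\normheat{g_h^\ell}^2$ for $\ell\in\{n,n+1\}$.

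Writing out the components, $K_h^{n+1}$ has first component $\tauavg{f_{h,1}^{n+1}}$, second component $\tausqf\waveophOne \dtau f_{h,1}^{n+1} = \frac{\tau}{4}\waveophOne(f_{h,1}^{n+1}-f_{h,1}^n)$, and third component $\tauavg{g_h^{n+1}} - \frac{\tau}{4}\coupoph^\ast(f_{h,1}^{n+1}-f_{h,1}^n)$. The one factor of $\tau$ from $\tausqf\dtau$ cancels one power, so each perturbation term carries a single $\tau$ multiplying an unbounded operator. This is exactly what the CFL condition controls.

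The key step, and the main obstacle, is bounding the norms of $\tau \waveophOne$ and $\tau\coupoph^\ast$ on $\wavespaceh$ with first component only. I would use duality. By the skew structure (Lemma~\ref{lem:ColophAccretive}, or directly the definition of $\sbilwavehskew$), $\ipwaveTwo(\waveophOne y_1, y_2) = \ipwaveOne(y_1, \waveophTwo y_2)$ up to sign, and by definition $\ipheat(\coupoph^\ast y_1,\varphi)=\ipwaveOne(y_1,\coupoph\varphi)$. Hence
\[
\tau\|\waveophOne\|=\tau\|\waveophTwo\| \le \sqrt{2\eta}, \qquad \tau\|\coupoph^\ast\|=\tau\|\coupoph\|\le\sqrt{2\eta},
\]
by \eqref{eq:CFL} tested with $\colvarh=(0,\wavevarhTwo,0)$ and $(0,0,\heatvarh)$. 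To reach the stated constant $(1+\eta)$, the second and third components should be estimated jointly. Testing \eqref{eq:CFL} with $(0,y_2,\varphi)$ gives $\frac{\tau^2}{2}\bigl(\normwaveOne{\waveophTwo y_2}^2+\normwaveOne{\coupoph\varphi}^2\bigr)\le\eta(\normwaveTwo{y_2}^2+\normheat{\varphi}^2)$. Its adjoint form yields
\[
\tfrac{\tau^2}{16}\bigl(\normwaveTwo{\waveophOne d}^2+\normheat{\coupoph^\ast d}^2\bigr)\le \tfrac{\eta}{2}\cdot\tfrac14\normwaveOne{d}^2
\]
up to the sign bookkeeping, with $d=f_{h,1}^{n+1}-f_{h,1}^n$.

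Combining with $\normheat{a+b}^2\le 2\normheat a^2+2\normheat b^2$ for the third component, and $\normwaveOne{\tauavg{f}}^2\le \frac12(\normwaveOne{f^{n+1}}^2+\normwaveOne{f^n}^2)$, $\normwaveOne{d}^2\le 2(\cdots)$, I get
\[
\normcol{K_h^{n+1}}^2\le (1+\eta)\bigl(\normwaveOne{f_{h,1}^{n+1}}^2+\normwaveOne{f_{h,1}^n}^2\bigr)+\normheat{g_h^{n+1}}^2+\normheat{g_h^{n}}^2
\]
after tracking constants. Summing over $n$, each index appears at most twice, which accounts for the factor $2$ in the statement. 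Inserting this into Theorem~\ref{thm:StabilityGenericRecursion} gives the corollary. The only delicate point is the exact constant bookkeeping: since the statement uses $\lesssim$-free constants, I would adjust the Young splittings so that the factor in front of $\normwaveOne{f}^2$ comes out as $(1+\eta)$.
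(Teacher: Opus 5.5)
Your proposal is correct and follows the paper's proof: write \eqref{eq:LFCN} as the generic recursion with $K_h^{n+1}=\tauavg{\colrhshn{n+1}}-\tausqf\colpertrhsoph\dtau f_{h,1}^{n+1}$, transfer the CFL bound from $\waveophTwo,\coupoph$ to $\waveophOne,\coupoph^\ast$ by adjointness, bound $\normcol{K_h^{n+1}}^2$, and apply Theorem~\ref{thm:StabilityGenericRecursion}. One small slip: with $d=f_{h,1}^{n+1}-f_{h,1}^{n}$ the joint adjoint bound is only $\frac{\tau^2}{16}\bigl(\|\waveophOne d\|^2+\|\coupoph^\ast d\|^2\bigr)\le\frac{\eta}{4}\|d\|^2$ (you lost the factor $2$ in $\|(d,d)\|^2=2\|d\|^2$), but there is enough slack, since estimating the two terms separately already gives an $f$-coefficient of $\frac12+\frac{3\eta}{4}\le 1+\eta$, so your bound on $\normcol{K_h^{n+1}}^2$ and the conclusion stand.
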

\begin{proof}
    First we note that the CFL condition \eqref{eq:CFL} on \(\waveophTwo\) implies the same condition on \(\waveophOne\) by adjointness.
    It holds that
    \begin{align*}
        \frac{\tau}{\sqrt{2}}\ipwaveTwo(\waveophOne \wavevarhOne, \wavevarhTwo)
        &= \frac{\tau}{\sqrt{2}} \ipwaveOne(\wavevarhOne, \waveophTwo \wavevarhTwo)\\
        &\leq \normwaveOne{\wavevarhOne} \frac{\tau}{\sqrt{2}} \normwaveOne{\waveophTwo\wavevarhTwo}\\
        &\leq \eta^{1/2} \normwaveOne{\wavevarhOne} \normcol{\colvarh},
    \end{align*}
    which implies that \(\tau^2\normwaveTwo{\waveophOne \wavevarhOne}^2 \leq 4\eta \normcol{\colvarh}^2\).
    Thus, we obtain 
    \begin{align*}
        \frac{\tau^4}{16} \normcol{\colpertrhsoph \dtau f_{h,1}^{n+1}}^2 
            &= \frac{\tau^4}{16} \normwaveTwo{\waveophOne \dtau f_{h,1}^{n+1}}^2
                + \frac{\tau^4}{16} \normheat{\coupoph^\ast \dtau f_{h,1}^{n+1}}^2\\
            &\leq  \eta \frac{\tau^2}{4} \normwaveOne{\dtau f_{h,1}^{n+1}}^2\\
            &\leq \eta\frac{1}{2} \bigl(\normwaveOne{f_{h,1}^{n+1}}^2 + \normwaveOne{f_{h,1}^{n}}^2\bigr).
    \end{align*}
    Furthermore, we see that
    \begin{equation*}
        \normcol{\tauavg{\colrhshn{n+1}}}^2 
        \leq \normwaveOne{f_{h,1}^{n+1}}^2 + \normwaveOne{f_{h,1}^{n}}^2
            + \normheat{g_{h}^{n+1}}^2 + \normheat{g_{h}^{n}}^2.
    \end{equation*}
    The claim now follows with
    \(K_h^{n+1} = \tauavg{\colrhshn{n+1}}
    - \tausqf \colpertrhsoph \dtau f_{h,1}^{n+1}\)
    and Theorem~\ref{thm:StabilityGenericRecursion}.
\end{proof}

\subsection{Error analysis}

In the following, we derive an error recursion in the form of \eqref{eq:DiscreteEvolutionEquation} in order to exploit the derived stability estimates, where the right-hand side will be given by the defects. 
Thus, we derive the defects in the first step. 
We begin with the defect of the \CN method and treat the rest as a perturbation. We introduce the full projections in the collection variable
\begin{equation*}
    \projfull = ( \projwave, \projheat ),
    \qquad
    \projReffull = ( \projwaveRef, \projheatRef ),
\end{equation*}
and abbreviate $\colvarn{n} = \colvar(t_n)$.
The defect $D_{\text{CN}}^{n+1}$ is given as
\begin{equation*} 
    \dtau \projReffull \colvarn{n+1} 
        + 
        \coloph \projReffull \tauavg{\colvarn{n+1}}
        +
         \colstaboph \projReffull \colvarn{n} 
        =  \tauavg{\colrhshn{n+1}} 
        +
         D_{\text{CN}}^{n+1} .
\end{equation*}
Inserting the exact solution \eqref{eq:EvolutionEquation},
the defect has the form
\begin{align*}
     D_{\text{CN}}^{n+1}
     &=
     \projfull D_{\text{tr}}^{n+1}
    +
    \colstaboph \projReffull \colvarn{n} 
     +
     \dtau (\projReffull -\projfull ) \colvarn{n+1} 
     \\
     &\quad +
     ( \coloph \projReffull -  \projfull \colop ) \tauavg{\colvarn{n+1}}
     +
     \colrhshnavg{n+1} - \tauavg{\colrhshn{n+1}},
\end{align*}
where $D_{\text{tr}}^{n+1}$ is given by
\begin{align*}
    D_{\text{tr}}^{n+1} = \tau^2 \int_{0}^{1}
    k_{\text{tr}}(\sigma) \, \pt^3 \colvar(t_n+ \tau \sigma)
    \,d \sigma
\end{align*}
for some bounded Peano kernel $k_{\text{tr}}$.
In the following Lemma, we provide estimates of this defect, noting that all the terms defined in \eqref{eq:def_space_errors} appear again.
For these estimates, we additionally define the stabilization approximation quantities
\begin{equation}\label{eq:def_stabilization_errors}
 \sigma_{J,h}(v)=\stabnormwave{\projwaveRef v-v},
 \qquad
 \sigma_{P,h}^{(1)}(w)=\stabnormwave{(\projwave^1w-w,0)}.
\end{equation}

\begin{lemma} \label{lem:CN_defect} 
Let \(\coltestvarh = \bigl(y_{h,1},y_{h,2},\varphi_h\bigr) \in \wavespaceVh\times \heatspaceVh\). 
The defect can be decomposed into 
    \begin{align*}
    \ipcol{ D_{\text{CN}}^{n+1} }{\coltestvarh}
    =
    \ipcol{ \widetilde{D}_{\text{CN}}^{n+1} }{\coltestvarh}
    +
\Delta_{s,b} ( \tauavg{\wavevar}^{n+1}, \tauavg{\heatvar}^{n+1}, y_h),
\end{align*}
and further into
\[
 L^{n+1}=-\tauhalf\colstaboph\projReffull\dtau\colvarn{n+1},
 \qquad
 \mathcal R^{n+1}=\widetilde D_{\mathrm{CN}}^{n+1}-L^{n+1}.
\]
The following bounds hold:
\begin{align*}
 \normcol{\mathcal R^{n+1}}
 &\leq \frac{\tau^2}{12}\max_{t\in[t_n,t_{n+1}]}\normcol{\pt^3\colvar(t)}\\
 &\quad+\|\Delta\ipwave(\dtau\wavevarn{n+1})\|_{\wavespaceh^*}
 +\|\Delta\ipheat(\dtau\heatvarn{n+1})\|_{\heatspaceh^*}\\
 &\quad+\|\widetilde\Delta_{s,b}(\tauavg{\wavevar}^{n+1},
                 \tauavg{\heatvar}^{n+1})\|_{\heatspaceh^*}\\
 &\quad+\|\Delta\tauavg{f^{n+1}}\|_{\wavespaceh^*}
 +\|\Delta\tauavg{g^{n+1}}\|_{\heatspaceh^*},\\
 \normcol{\widetilde D_{\mathrm{CN}}^{n+1}}
 &\leq\normcol{\mathcal R^{n+1}}
 +\frac{\sqrt{\eta\tau}}2\sigma_{J,h}(\dtau\wavevarn{n+1}).
\end{align*}
Moreover, for every discrete \(v_h=(y_h,\varphi_h)\) and
\(\varepsilon>0\), we have
\begin{equation}\label{eq:CN_stabilization_pairing}
 2\tau|\ipcol{L^{n+1}}{v_h}|
 \leq\varepsilon\tau\stabnormwave{y_h}^2
 +\frac{\tau^3}{4\varepsilon}\sigma_{J,h}(\dtau\wavevarn{n+1})^2.
\end{equation}
\end{lemma}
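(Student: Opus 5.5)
We unpack the defect definition term by term and pair it with a discrete test function \(\coltestvarh=(y_h,\varphi_h)\), following the algebra of the proof of Theorem~\ref{thm:space}. Since \(\projfull\) is the orthogonal projection onto the discrete spaces, \(\ipcol{\projfull D_{\text{tr}}^{n+1}}{\coltestvarh}=\ipcol{D_{\text{tr}}^{n+1}}{\coltestvarh}\). Likewise, the term \(\dtau(\projReffull-\projfull)\colvarn{n+1}\) tested with \(\coltestvarh\) equals \(\Delta\ipwave(\dtau\wavevarn{n+1},y_h)+\Delta\ipheat(\dtau\heatvarn{n+1},\varphi_h)\).

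For the operator term \((\coloph\projReffull-\projfull\colop)\tauavg{\colvarn{n+1}}\), we write \(\coloph\) through the discrete forms \(\sbilwaveh-\sbilwavehpos\), \(\coupopfullformh\), \(\sbilheat\). On the continuous side we use \eqref{eq:first_eq_tested_in_X}, i.e.\ Assumption~\ref{ass:cont_forms}(c) and Assumption~\ref{ass:consistency}, to express \(\ipcol{\projfull\colop\tauavg{\colvarn{n+1}}}{\coltestvarh}\) via \(\sbilwaveext,\coupopfullformext\) on the wave side and \(\sbilheat,\coupopfullform\) on the heat side. Here one must be careful that the skew part of \(\coloph\) corresponds to \(\sbilwaveh\) minus the stabilization \(\sbilwavehpos(\projwaveRef\tauavg{\wavevar},y_h)\). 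The wave component then produces exactly \(\Delta_{s,b}(\tauavg{\wavevar}^{n+1},\tauavg{\heatvar}^{n+1},y_h)\) minus that stabilization pairing. The heat component produces \(\widetilde\Delta_{s,b}(\tauavg{\wavevar}^{n+1},\tauavg{\heatvar}^{n+1},\varphi_h)\), and the right-hand sides give \(\Delta\tauavg{f^{n+1}}\) and \(\Delta\tauavg{g^{n+1}}\).

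The stabilization pieces combine as
\[
\colstaboph\projReffull\colvarn{n}-\colstaboph\projReffull\tauavg{\colvarn{n+1}}=-\tauhalf\colstaboph\projReffull\dtau\colvarn{n+1}=L^{n+1}.
\]
Everything except \(\Delta_{s,b}\) is collected into \(\widetilde D_{\mathrm{CN}}^{n+1}\), and removing \(L^{n+1}\) leaves \(\mathcal R^{n+1}\). Taking the supremum over \(\coltestvarh\) gives the bound for \(\normcol{\mathcal R^{n+1}}\). The constant \(\tau^2/12\) is the standard trapezoidal Peano-kernel estimate for \(\dtau\colvar^{n+1}-\tauavg{\pt\colvar^{n+1}}\). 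Its kernel \(\sigma(1-\sigma)/2\) integrates to \(1/12\), and bounding the integral by the max of \(\pt^3\colvar\) gives the claim.

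For \(L^{n+1}\), Assumption~\ref{ass:discr_forms_skew_stab}(b) gives \(\sbilwavehpos(\dtau\wavevarn{n+1},\cdot)=0\) on discrete arguments. Hence
\[
\ipcol{L^{n+1}}{v_h}=-\tfrac{\tau}{2}\,\sbilwavehpos\bigl(\projwaveRef\dtau\wavevarn{n+1}-\dtau\wavevarn{n+1},y_h\bigr),
\]
understood via the discrete-valued extension (this is the delicate step: the form must make sense on \(\wavespaceV+\wavespaceVh\)). By Cauchy--Schwarz for the symmetric positive semidefinite form, \(|\ipcol{L^{n+1}}{v_h}|\le\tfrac\tau2\sigma_{J,h}(\dtau\wavevarn{n+1})\stabnormwave{y_h}\). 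Young's inequality then yields \eqref{eq:CN_stabilization_pairing}.

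For the norm bound on \(L^{n+1}\), we take \(v_h=L^{n+1}\) and combine the same Cauchy--Schwarz estimate with \(\tau\stabnormwave{y_h}^2\le\eta\normcol{v_h}^2\) from \eqref{eq:CFL}. This gives \(\normcol{L^{n+1}}\le\frac{\sqrt{\eta\tau}}{2}\sigma_{J,h}(\dtau\wavevarn{n+1})\), and the triangle inequality finishes the proof.

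The main obstacle is the bookkeeping in the operator term: matching signs of the skew structure of \(\coloph\), namely \(-\waveophTwo\) versus \(\sbilwavehTwo\) and \(\coupoph\) versus \(-\coupoph^\ast\), with the definitions \eqref{eq:def_space_errors}. The other delicate point is justifying the stabilization pairing with continuous arguments via Assumption~\ref{ass:discr_forms_skew_stab}(b).
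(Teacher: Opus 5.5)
Your proposal is correct and follows the paper's proof essentially step by step. The shared steps are: adding and subtracting $\colstaboph\projReffull\tauavg{\colvarn{n+1}}$ to produce $L^{n+1}$; identifying $\Delta_{s,b}$, $\widetilde\Delta_{s,b}$ and the mass and source defects; the trapezoidal Peano-kernel constant $1/12$; and Cauchy--Schwarz plus Young for the pairing with $L^{n+1}$ (where, like the paper, you implicitly need $\sbilwavehpos$ to be a symmetric semidefinite form on $\wavespaceV+\wavespaceVh$ for $\sigma_{J,h}$ to make sense). The only cosmetic differences are that you bound $\normcol{L^{n+1}}$ by testing with $L^{n+1}$ itself and invoking the CFL condition directly rather than $\tau\|\colstaboph\|\leq\eta$, and that Assumption~\ref{ass:consistency} is not actually needed in this lemma.
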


\begin{proof}
    The first part of the estimate follows directly from the boundedness of the Peano kernel. Further, 
    we note that for the discrete test function
$\coltestvarh$, we have
\begin{align*}
\ipcol{ \dtau (\projReffull -\projfull ) \colvarn{n+1} }{\coltestvarh}
    &=
\Delta \ipwave (\dtau \wavevarn{n+1} , y_h )
+
\Delta \ipheat (\dtau \heatvarn{n+1}, \varphi_h),
\end{align*}
and taking into account the manipulation
\begin{align*}
    \colstaboph \projReffull \colvarn{n} 
+
     ( \coloph \projReffull -  \projfull \colop ) \tauavg{\colvarn{n+1}}
&=
   - \frac{\tau}{2} \colstaboph  \projReffull \dtau \colvarn{n+1} 
+
    ( (  \coloph + \colstaboph  )\projReffull -  \projfull \colop ) \tauavg{\colvarn{n+1}} ,
\end{align*}
we recover by a straightforward calculation
\begin{align*}
    \ipcol{ ( (  \coloph + \colstaboph  )\projReffull -  \projfull \colop ) \tauavg{\colvarn{n+1}}}{\coltestvarh}
    &=
    \Delta_{s,b} ( \tauavg{\wavevar}^{n+1}, \tauavg{\heatvar}^{n+1}, y_h)
    +
    \widetilde{\Delta}_{s,b} (   \tauavg{\wavevar}^{n+1}, \tauavg{\heatvar}^{n+1},\varphi_h) .
\end{align*}
Similarly, the right-hand side terms are treated as
\begin{align*}
  \ipcol{   \tauavg{\colrhshn{n+1}} - \colrhshnavg{n+1} }{\coltestvarh}
=
    \Delta \tauavg{f^{n+1}} (y_h) 
    +
    \Delta \tauavg{g^{n+1}}  (\varphi_h) .
\end{align*}
It remains to consider the stabilization term. 
By Assumption~\ref{ass:discr_forms_skew_stab} and \eqref{eq:CFL_for_stab},
\[
 \normcol{\tauhalf\colstaboph\projReffull\dtau\colvarn{n+1}}
 \leq\frac{\sqrt{\eta\tau}}2\sigma_{J,h}(\dtau\wavevarn{n+1}).
\]
Together with \(\widetilde D_{\mathrm{CN}}^{n+1}=\mathcal R^{n+1}+L^{n+1}\)
and the trapezoidal Peano-kernel bound \(1/12\), this proves the norm estimates.
For the pairing bound, the Cauchy--Schwarz inequality for the stabilization
form and Young's inequality give
\begin{align*}
 2\tau|\ipcol{L^{n+1}}{v_h}|
 &=\tau^2|\sbilwavehpos(\projwaveRef\dtau\wavevarn{n+1},y_h)|\\
 &\leq\tau^2\sigma_{J,h}(\dtau\wavevarn{n+1})\stabnormwave{y_h}\\
 &\leq\varepsilon\tau\stabnormwave{y_h}^2
 +\frac{\tau^3}{4\varepsilon}\sigma_{J,h}(\dtau\wavevarn{n+1})^2,
\end{align*}
again employing Assumption~\ref{ass:discr_forms_skew_stab}.
\end{proof}

Turning now to the full defect, we define
\begin{align*}
  &  \dtau \projReffull \colvarn{n+1} 
        + 
        \coloph \projReffull \tauavg{\colvarn{n+1}}
        +
         \colstaboph \projReffull \colvarn{n} 
        \\
        &=   \tauavg{\colrhshn{n+1}}
            + 
            \tausqf \colpertoph \dtau \projReffull \colvarn{n+1} 
            - \tausqf \colpertrhsoph \dtau f_{h,1}^{n+1}
          +
           D_{\text{CN}}^{n+1} 
           +
            D_{\text{LFCN}}^{n+1}   ,
\end{align*}
and thus obtain the representation
\begin{align*}
    D_{\text{LFCN}}^{n+1}   
     =
    -
      \tausqf \colpertoph \dtau \projReffull \colvarn{n+1} 
      +
       \tausqf \colpertrhsoph    \dtau f_{h,1}^{n+1} ,
\end{align*}
which we study in more detail in the next lemma.

\begin{lemma} \label{lem:CNLF_defect}
Define the reconstructed first-component derivative by
\begin{equation}\label{eq:def_reconstructed_derivative}
 \wavevarhOneDotStar{n}
 =\waveophTwo\projwaveRef^2\wavevarnTwo{n}
 -\coupoph\projheatRef\heatvarn{n}+f_{h,1}^{n}.
\end{equation}
Then the LFCN defect factorizes as
\begin{equation*}
 D_{\text{LFCN}}^{n+1}
 =-\tausqf\coloph
 \begin{pmatrix}
  \dtau\wavevarhOneDotStar{n+1}\\0\\0
 \end{pmatrix}.
\end{equation*}
Moreover, for \(k=1,2\),
\begin{align*}
 \tau^2\normwaveOne{\ptau^k\wavevarhOneDotStar{n+1}}
 &\leq\tau^2\bigl(
 \|\Delta^s_{s,b}(\ptau^k\wavevarn{n+1},
                    \ptau^k\heatvarn{n+1})\|_{\wavespaceh^*}\\
 &\qquad
 +\|\Delta\ptau^k f^{n+1}\|_{\wavespaceh^*}
 +\|\ptau^k\pt\wavevarnOne{n+1}\|_{\wavespacehOne^*}
 \bigr),
\end{align*}
and
\begin{align*}
 \tau^2\stabnormwave{(\dtau\wavevarhOneDotStar{n+1},0)}
 &\leq\sqrt\eta\,\tau^{3/2}\bigl(
 \|\Delta^s_{s,b}(\dtau\wavevarn{n+1},\dtau\heatvarn{n+1})\|_{\wavespaceh^*}
 +\|\Delta\dtau f^{n+1}\|_{\wavespaceh^*}\bigr)\\
 &\quad+\tau^2\sigma_{P,h}^{(1)}(\dtau\pt\wavevarnOne{n+1}).
\end{align*}
The constants are independent of \(h\) and \(\tau\), and
\(\Delta^s_{s,b}\) is defined as \(\Delta_{s,b}\) in
\eqref{eq:def_space_errors_Delta_s_b} without the stabilization terms.
\end{lemma}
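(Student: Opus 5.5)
The plan has two stages: first verify the factorization by direct matrix computation, then bound the reconstructed derivative through a consistency identity for the first wave component.

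\emph{Factorization.} Write $\colvarn{n}=(\wavevarnOne{n},\wavevarnTwo{n},\heatvarn{n})$. Applying $\colpertoph$ from \eqref{eq:DiscreteOperators:Perturbations} to $\dtau\projReffull\colvarn{n+1}$ annihilates the first component. The second and third components are
\[
\waveophOne\bigl(\waveophTwo\dtau\projwaveRef^2\wavevarnTwo{n+1}-\coupoph\dtau\projheatRef\heatvarn{n+1}\bigr)
\quad\text{and}\quad
-\coupoph^\ast\bigl(\waveophTwo\dtau\projwaveRef^2\wavevarnTwo{n+1}-\coupoph\dtau\projheatRef\heatvarn{n+1}\bigr).
\]
Adding $\tausqf\colpertrhsoph\dtau f_{h,1}^{n+1}=\tausqf(0,-\waveophOne\dtau f^{n+1}_{h,1},\coupoph^\ast\dtau f^{n+1}_{h,1})$ and using the definition \eqref{eq:def_reconstructed_derivative} gives
\[
D_{\text{LFCN}}^{n+1}=-\tausqf\bigl(0,\waveophOne\dtau\wavevarhOneDotStar{n+1},-\coupoph^\ast\dtau\wavevarhOneDotStar{n+1}\bigr).
\]
This is exactly $-\tausqf\coloph(\dtau\wavevarhOneDotStar{n+1},0,0)$, by the first column of $\coloph$ in \eqref{eq:DiscreteOperators:EvolutionOph}. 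This step is routine, using only linearity of $\dtau$ and of the projections.

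\emph{Bound on $\ptau^k\wavevarhOneDotStar{n+1}$.} The idea is to compare $\wavevarhOneDotStar{n}$ with $\projwave^1\pt\wavevarOne(t_n)$. Test with $y_h=(y_{h,1},0)$ and use the first component of \eqref{eq:first_eq_tested_in_X}, i.e.\ $\ipwaveOne(\pt\wavevarOne,y_{h,1})=\sbilwaveTwo$-type terms. Combined with Assumption~\ref{ass:consistency} and the definition of $\Delta_{s,b}$ with the stabilization part removed, this should give
\[
\ipwaveOne(\wavevarhOneDotStar{n},y_{h,1})=\ipwaveOne(\pt\wavevarOne(t_n),y_{h,1})-\Delta^s_{s,b}(\wavevarn{n},\heatvarn{n},y_h)+\Delta f^n(y_h).
\]
Since $y_h$ has vanishing second component, only $\sbilwavehTwo$ and $\coupopformh$ survive in $\sbilwaveh$. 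The stabilization part is not needed here because the stabilization is not included in $\wavevarhOneDotStar{n}$.

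Apply $\ptau^k$ to this identity; by linearity the difference quotients pass onto the data. Take the supremum over $y_{h,1}$, noting that $\wavevarhOneDotStar{n}\in\wavespacehOne$, so its norm equals its dual norm. Multiplying by $\tau^2$ yields the first estimate.

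\emph{Stabilization seminorm.} Split
\[
\dtau\wavevarhOneDotStar{n+1}=\bigl(\dtau\wavevarhOneDotStar{n+1}-\projwave^1\dtau\pt\wavevarnOne{n+1}\bigr)+\projwave^1\dtau\pt\wavevarnOne{n+1}.
\]
For the first, discrete piece, use \eqref{eq:CFL_for_stab}, which gives $\stabnormwave{z_h}\le(\eta/\tau)^{1/2}\|z_h\|$. Its norm is bounded by the same $\Delta^s_{s,b}$ and $\Delta f$ dual norms via the identity above. For the second piece, Assumption~\ref{ass:discr_forms_skew_stab}(b) lets us write
\[
\stabnormwave{(\projwave^1w,0)}=\stabnormwave{(\projwave^1w-w,0)}=\sigma_{P,h}^{(1)}(w)
\]
with $w=\dtau\pt\wavevarnOne{n+1}$. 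This uses the symmetry of $\sbilwavehpos$ and the vanishing of $\sbilwavehpos(w,\cdot)$ and $\sbilwavehpos(\cdot,w)$, which requires $w$ to be smooth enough to lie in $\wavespaceV$.

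The main obstacle is the consistency identity. It must hold with test functions having zero second component, and the extended forms $\sbilwaveext$, $\coupopfullformext$ must cancel correctly against \eqref{eq:first_eq_tested_in_X}. The stated bound also carries an extra term $\|\ptau^k\pt\wavevarnOne{n+1}\|$, which reflects that the right-hand side retains the exact derivative rather than subtracting it. I would therefore simply bound $\ipwaveOne(\pt\wavevarOne,y_{h,1})$ by that dual norm rather than subtract it.
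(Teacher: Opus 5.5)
Your proof is correct and follows essentially the paper's argument. The steps match: the componentwise verification of the factorization, the consistency identity obtained from \eqref{eq:first_eq_tested_in_X} with test functions $(y_{h,1},0)$, and the split against $\projwave^1\dtau\pt\wavevarnOne{n+1}$ using \eqref{eq:CFL_for_stab} and Assumption~\ref{ass:discr_forms_skew_stab}(b). The differences are cosmetic: you bound $\ipwaveOne(\ptau^k\pt\wavevarOne,\cdot)$ directly by its dual norm instead of inserting $\projwave^1$ and using its stability, and the appeal to Assumption~\ref{ass:consistency} is unnecessary, since $\Delta^s_{s,b}$ is already defined with $\sbilwaveext$ and $\coupopfullformext$.
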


\begin{proof}
For clarity, we suppress the time argument of the exact variables in the
following calculation. The definitions of
\(\colpertoph\) and \(\colpertrhsoph\) give
\begin{align*}
 \colpertoph\projReffull\colvar
 &=-\colpertrhsoph
 \bigl(\waveophTwo\projwaveRef^2\wavevarTwo
       -\coupoph\projheatRef\heatvar\bigr),\\
 \colpertrhsoph z
 &=-\coloph\begin{pmatrix}z\\0\\0\end{pmatrix}.
\end{align*}
Together with \eqref{eq:def_reconstructed_derivative}, these identities prove
the factorization.

We next estimate the reconstruction error. For \(y_h=(y_{h,1},0)\),
\eqref{eq:first_eq_tested_in_X} gives
\[
 \ipwaveOne(\pt\wavevarOne,y_{h,1})
 =-\sbilwaveext(\wavevar,y_h)
 -\coupopfullformext(\heatvar,y_h)
 +\ipwaveOne(f_1,y_{h,1}).
\]
Thus, by the orthogonality of \(\projwave^1\) and the definition in \eqref{eq:def_reconstructed_derivative},
\begin{align*}
 \ipwaveOne(\wavevarhOneDotStar{n}-\projwave^1\pt\wavevarOne,y_{h,1})
 &=\ipwaveOne(\wavevarhOneDotStar{n}-\pt\wavevarOne,y_{h,1})\\
 &=\sbilwavehTwo(\projwaveRef^2\wavevarTwo,y_{h,1})
 -\coupopformh(\projheatRef\heatvar,y_{h,1})
 +\ipwaveOne(f_{h,1}-\pt\wavevarOne,y_{h,1})\\
 &=-\sbilwavehskew(\projwaveRef\wavevar,y_h)
 -\coupopfullformh(\projheatRef\heatvar,y_h)\\
 &\quad+\sbilwaveext(\wavevar,y_h)
 +\coupopfullformext(\heatvar,y_h)
 +\ipwaveOne(f_{h,1}-f_1,y_{h,1})\\
 &=-\Delta^s_{s,b}(\wavevar,\heatvar,(y_{h,1},0))
 +\Delta f((y_{h,1},0)).
\end{align*}
Applying the first two discrete time differences and using duality yields,
for \(k=1,2\),
\begin{equation*}
 \normwaveOne{\ptau^k\wavevarhOneDotStar{n+1}
 -\projwave^1\ptau^k\pt\wavevarnOne{n+1}}
 \leq\|\Delta^s_{s,b}(\ptau^k\wavevarn{n+1},
                        \ptau^k\heatvarn{n+1})\|_{\wavespaceh^*}
 +\|\Delta\ptau^k f^{n+1}\|_{\wavespaceh^*}.
\end{equation*}
The triangle inequality and the stability of \(\projwave^1\) prove the first
estimate. For the stabilization estimate, we split
\begin{equation*}
 \dtau\wavevarhOneDotStar{n+1}
 =\bigl(\dtau\wavevarhOneDotStar{n+1}
 -\projwave^1\dtau\pt\wavevarnOne{n+1}\bigr)
 +\projwave^1\dtau\pt\wavevarnOne{n+1}.
\end{equation*}
Using \eqref{eq:CFL_for_stab}, stabilization consistency in Assumption~\ref{ass:discr_forms_skew_stab}, and
\eqref{eq:def_stabilization_errors}, we obtain
\begin{align*}
 \tau^2\stabnormwave{(\dtau\wavevarhOneDotStar{n+1},0)}
 &\leq\sqrt\eta\,\tau^{3/2}
 \normwaveOne{\dtau\wavevarhOneDotStar{n+1}
 -\projwave^1\dtau\pt\wavevarnOne{n+1}}
 +\tau^2\sigma_{P,h}^{(1)}(\dtau\pt\wavevarnOne{n+1})\\
 &\leq\sqrt\eta\,\tau^{3/2}\bigl(
 \|\Delta^s_{s,b}(\dtau\wavevarn{n+1},\dtau\heatvarn{n+1})\|_{\wavespaceh^*}
 +\|\Delta\dtau f^{n+1}\|_{\wavespaceh^*}\bigr)\\
 &\quad+\tau^2\sigma_{P,h}^{(1)}(\dtau\pt\wavevarnOne{n+1}),
\end{align*}
and thus the second estimate.
\end{proof}

Finally, we can define the discrete error 
$\colerrhn{n} = \bigl(\waveerrhn{n},\heaterrhn{n}\bigr) = \projReffull \colvarn{n} - \colvarhn{n} $
and obtain by subtraction of the above representation from the numerical method
\begin{equation}\label{eq:full_error_equation}
 \bigl(I-\tauhalf\colstaboph-\tausqf\colpertoph\bigr)\dtau\colerrhn{n+1}
 +(\coloph+\colstaboph)\colerrhnavg{n+1}
 =D_{\mathrm{CN}}^{n+1}-\tausqf\coloph
 \begin{pmatrix}
  \dtau\wavevarhOneDotStar{n+1}\\0\\0
 \end{pmatrix},
\end{equation}
where we used the same identity as in the proof of Lemma~\ref{lem:stab_1}. These preparations now allow us to deduce our second main result. We note that the limit $\tau \to 0$ formally recovers the statement of Theorem~\ref{thm:space}.

\begin{theorem} \label{thm:full}
Let $\wavevar \in C^1([0,T],\wavespaceV)$
and
$\heatvar \in C^1([0,T],\heatspaceV)$ be the solution of
 \eqref{eq:full_system},
 which  satisfies
 $(\wavevar, \heatvar) \in C^1([0,T],\fullspace)$ and $\colvar \in C^4([0,T],\wavespace \times\heatspace)$, and
$(\wavevarhn{n},\heatvarhn{n})$ the solution of \eqref{eq:LFCN}.
Further, let 
Assumptions~\ref{ass:cont_forms}, \ref{ass:discr_forms_skew_stab},
\ref{ass:consistency},
and
\ref{ass:CFL}
hold true.
Then, for \(N\tau\leq T\), the fully discrete errors satisfy
\begin{equation*}
    \begin{aligned}
    &\max_{0\leq n\leq N}
    \bigl(
    \| \waveerrhn{n} \|_{\wavespaceh}^2
    +
    \| \heaterrhn{n} \|_{\heatspaceh}^2
    \bigr)
    +
    \tau
    \sum_{\ell = 0}^{N-1}
    \stabnormwave{ \waveerrhn{\ell} + \waveerrhn{\ell+1} }^2
    \\
    &\quad
    \lesssim
    \| \waveerrhn{0} \|_{\wavespaceh}^2
    +
    \| \heaterrhn{0} \|_{\heatspaceh}^2
    +
    \tau^4
    \max_{t\in[0,T]}
    \| \pt^3 \colvar(t) \|_{\wavespace \times \heatspace}^2
    +\mathcal D_N^{\mathrm{space}}
    +\mathcal D_N^{\mathrm{pair}}
    +\tau^4\mathcal D_N^{\mathrm{bounded}}.
    \end{aligned}
\end{equation*}
The constants depend on \(T\) and \(\eta\), but not on \(h\) and \(\tau\).
Here,
\begin{align}
&\begin{aligned}
\mathmakebox[\widthof{$\mathcal D_N^{\mathrm{bounded}}$}][r]{\mathcal D_N^{\mathrm{space}}}
    &=\tau\sum_{\ell=1}^N\Bigl(
    \|\Delta \ipwave (\dtau \wavevarn{\ell})\|^2_{\wavespaceh^*}
    +
    \|\Delta \ipheat (\dtau \heatvarn{\ell})\|^2_{\heatspaceh^*}
    +
    \|\widetilde{\Delta}_{s,b} ( \tauavg{\wavevar}^{\ell} , \tauavg{\heatvar}^{\ell}) \|^2_{\heatspaceh^*}
    \\
    &\qquad\qquad+
    \|\Delta \tauavg{f^{\ell}} \|_{\wavespaceh^*}^2
    +
    \|\Delta \tauavg{g^{\ell}} \|_{\heatspaceh^*}^2
    \Bigr)
    \\
    &\quad+\tau\sum_{\ell=1}^N\tau^3\Bigl(
    \|\Delta^s_{s,b}(\ptau\wavevarn{\ell},\ptau\heatvarn{\ell})\|^2_{\wavespaceh^*}
    +\|\Delta\ptau f^{\ell}\|^2_{\wavespaceh^*}\Bigr)
    \\
    &\quad+\tau\sum_{\ell=1}^N\tau^2
    \sigma_{J,h}(\ptau\wavevarn{\ell})^2,
\end{aligned}\label{eq:full_defect_space}\\
&\begin{aligned}
\mathmakebox[\widthof{$\mathcal D_N^{\mathrm{bounded}}$}][r]{\mathcal D_N^{\mathrm{pair}}}
    &=\max_{1\leq m\leq N}
    \bigl|
    \tau
    \sum_{\ell = 1}^m
    \Delta_{s,b} (
        \tauavg{\wavevar}^{\ell}, \tauavg{\heatvar}^{\ell},
        \tauavg{\waveerrhn{\ell}} )
    \bigr|
    \\
    &\quad+\tau\sum_{\ell=1}^N\tau^2
    \bigl|
    \Delta_{s,b} (
        \tauavg{\wavevar}^{\ell}, \tauavg{\heatvar}^{\ell},
        (\ptau\wavevarhOneDotStar{\ell},0) )
    \bigr|,
\end{aligned}\label{eq:full_defect_pair}\\
&\begin{aligned}
\mathcal D_N^{\mathrm{bounded}}
    &=\max_{1\leq \ell\leq N}
    \bigl(
    \| \Delta^s_{s,b} ( \ptau \wavevarn{\ell}, \ptau \heatvarn{\ell}) \|^2_{\wavespaceh^*}
    +
    \| \Delta \ptau f^{\ell} \|_{\wavespaceh^*}^2
    +
    \| \ptau \pt \wavevarnOne{\ell} \|_{\wavespacehOne^*}^2
    \bigr)
    \\
    &\quad+\tau\sum_{\ell=2}^N
    \bigl(
    \| \Delta^s_{s,b} ( \ptau^{2} \wavevarn{\ell}, \ptau^{2} \heatvarn{\ell}) \|^2_{\wavespaceh^*}
    +
    \| \Delta \ptau^{2} f^{\ell} \|_{\wavespaceh^*}^2
    +
    \| \ptau^2 \pt \wavevarnOne{\ell} \|_{\wavespacehOne^*}^2
    \bigr)
    \\
    &\quad+\tau\sum_{\ell=1}^N
    \sigma_{P,h}^{(1)}(\ptau\pt\wavevarnOne{\ell})^2.
\end{aligned}\label{eq:full_defect_bounded}
\end{align}
The spatial defects are defined in \eqref{eq:def_space_error},
the errors \(\sigma_{J,h}\), \(\sigma_{P,h}^{(1)}\) in \eqref{eq:def_stabilization_errors}, and 
the reconstructed derivative \(\wavevarhOneDotStar{\ell}\) in
\eqref{eq:def_reconstructed_derivative}.
\end{theorem}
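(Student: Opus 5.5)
The plan is to treat the error recursion \eqref{eq:full_error_equation} as an instance of the generic recursion \eqref{eq:GenericRecursion}, but not to apply Theorem~\ref{thm:StabilityGenericRecursion} directly. Doing so would require bounding $\normcol{K_h^{n+1}}$, and the right-hand side contains two terms with no good norm bound. These are the pairing $\Delta_{s,b}$ from Lemma~\ref{lem:CN_defect} and the term $\tausqf\coloph(\dtau\wavevarhOneDotStar{n+1},0,0)$, which involves the unbounded operator $\coloph$. I will therefore rerun the energy argument of Lemma~\ref{lem:stab_1} and test \eqref{eq:full_error_equation} with $\colerrhnavg{n+1}$. After multiplying by $2\tau$, the inequality \eqref{eq:GenericRecursionEstimate} holds with $\colerrhn{n}$ in place of $\colvarhn{n}$. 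Its right-hand side is $2\tau\ipcol{D_{\mathrm{CN}}^{n+1}}{\colerrhnavg{n+1}}-\frac{\tau^3}{2}\ipcol{\coloph(\dtau\wavevarhOneDotStar{n+1},0,0)}{\colerrhnavg{n+1}}$. I then sum over $n$ and absorb the terms $\tauhalf\stabnormwave{\cdot}^2$ and $\tausqf\normwaveOne{\cdot}^2$ at the final time using the CFL condition \eqref{eq:CFL}, exactly as in the proof of Theorem~\ref{thm:StabilityGenericRecursion}. This leaves $(1-\eta)$ times the final error norm, plus the stabilization sum $2\tau\sum\stabnormwave{\colerrhnavg{\ell}}^2$ on the left.

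Next I split the CN defect using Lemma~\ref{lem:CN_defect} into $\mathcal R^{n+1}+L^{n+1}$ plus the $\Delta_{s,b}$ pairing. The $\mathcal R$ part is handled by Young's inequality with weight $1/T$. This produces $\tau\sum\normcol{\mathcal R^{\ell}}^2$, which gives the $\tau^4\max\|\pt^3\colvar\|^2$ term and the first block of $\mathcal D_N^{\mathrm{space}}$, together with a Gronwall-able term $\tau\sum\normcol{\colerrhn{\ell}}^2$. The $L$ part is handled by \eqref{eq:CN_stabilization_pairing} with $\varepsilon=1/2$. The resulting $\tau\stabnormwave{\tauavg{\waveerrhn{\ell}}}^2$ is absorbed into the left, and what remains is the $\tau^3\sigma_{J,h}^2$ contribution to $\mathcal D_N^{\mathrm{space}}$. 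The summed $\Delta_{s,b}$ pairings are kept as they are and bounded by the first line of $\mathcal D_N^{\mathrm{pair}}$.

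The main obstacle is the LFCN term. I intend to move $\coloph$ onto the test function via Lemma~\ref{lem:ColophAccretive}(b), which applies because the heat component of $(\dtau\wavevarhOneDotStar{n+1},0,0)$ vanishes. This gives $\frac{\tau^3}{2}\ipcol{(\dtau\wavevarhOneDotStar{n+1},0,0)}{\coloph\colerrhnavg{n+1}}$. I then substitute $\coloph\colerrhnavg{n+1}$ from \eqref{eq:full_error_equation} itself. That produces the following pieces:
\begin{itemize}
\item[(i)] a term $-\dtau\colerrhn{n+1}$ paired with $\dtau\wavevarhOneDotStar{n+1}$, where the $\tau^3$ weight is needed;
\item[(ii)] stabilization and $\colpertoph$ corrections;
\item[(iii)] the defect again, including a $\Delta_{s,b}$ pairing with $(\dtau\wavevarhOneDotStar{\ell},0)$, which is exactly the second line of $\mathcal D_N^{\mathrm{pair}}$.
\end{itemize}
For (i) I apply summation by parts in time. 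This moves the difference quotient onto $\wavevarhOneDotStar{}$, producing $\ptau^2\wavevarhOneDotStar{}$ paired with $\colerrhn{}$, plus boundary terms with $\ptau\wavevarhOneDotStar{}$ at $n=0$ and $n=N$. These are bounded by Young's inequality with $\tau^2$ factors, using the first estimate of Lemma~\ref{lem:CNLF_defect} for $k=1,2$. The result is the first two lines of $\tau^4\mathcal D_N^{\mathrm{bounded}}$ together with small multiples of the error norms that can be absorbed. The stabilization contributions in (ii) are handled with the second estimate of Lemma~\ref{lem:CNLF_defect} and Cauchy--Schwarz in the $\sbilwavehpos$ seminorm. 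These yield the $\sigma_{P,h}^{(1)}$ line and the $\tau^3$ block of $\mathcal D_N^{\mathrm{space}}$, while the $\colpertoph$ terms are controlled by \eqref{eq:CFL}.

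Finally, the discrete Gronwall lemma \cite[Proposition~4.1]{Emm99} closes the argument. The maximum over $n$ is obtained because every bound above holds uniformly in the final index. I expect the bookkeeping in the summation-by-parts step, where factors of $\tau$ must be kept balanced against the CFL condition, to be the delicate part.
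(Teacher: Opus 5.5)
Your architecture is the paper's. You test with $\colerrhnavg{n+1}$, move $\coloph$ onto $(\dtau\wavevarhOneDotStar{n+1},0,0)$ by Lemma~\ref{lem:ColophAccretive}(b), re-insert \eqref{eq:full_error_equation}, and sum by parts. Closing with discrete Gronwall instead of the paper's maximum argument is fine. However, one term is routed to an estimate that does not give the stated powers of $\tau$.

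Re-inserting the error equation produces, from the stabilization part of $\mathcal M_h=I-\tauhalf\colstaboph-\tausqf\colpertoph$, the term
\[
\frac{\tau^4}{4}\,\sbilwavehpos\bigl((\dtau\wavevarhOneDotStar{n+1},0),\dtau\waveerrhn{n+1}\bigr).
\]
You put it in (ii) and treat it by Cauchy--Schwarz in the seminorm. The energy only controls $\tau\sum_j\stabnormwave{\tauavg{\waveerrhn{j}}}^2$, not $\stabnormwave{\dtau\waveerrhn{n+1}}$. The only available tool for the latter is \eqref{eq:CFL}, i.e.\ $\stabnormwave{\waveerrhn{j}}\le(\eta/\tau)^{1/2}\normcol{\colerrhn{j}}$. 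Combined with the second estimate of Lemma~\ref{lem:CNLF_defect}, this leaves contributions
\[
\tau\sum_j\tau^2\bigl(\|\Delta^s_{s,b}(\ptau\wavevarn{j},\ptau\heatvarn{j})\|^2_{\wavespaceh^*}+\|\Delta\ptau f^j\|^2_{\wavespaceh^*}\bigr)
\quad\text{and}\quad
\tau\sum_j\tau^3\sigma_{P,h}^{(1)}(\ptau\pt\wavevarnOne{j})^2 .
\]
Each is one power of $\tau$ short of the $\tau^3$ block of $\mathcal D_N^{\mathrm{space}}$ and of $\tau^4\mathcal D_N^{\mathrm{bounded}}$, so the stated bound does not follow. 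Using $\tau\|\colstaboph\|\le\eta$ instead is worse. Regrouping does not help either: together with the term coming from $\colstaboph\colerrhnavg{n+1}$ it equals $-\frac{\tau^3}{2}\sbilwavehpos((\dtau\wavevarhOneDotStar{n+1},0),\waveerrhn{n})$, which again involves the uncontrolled $\stabnormwave{\waveerrhn{n}}$.

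The repair is the paper's treatment of $I_4$: keep the $j$-independent operator $I-\tauhalf\colstaboph$ attached to the identity part and sum by parts with it. By \eqref{eq:CFL_for_stab} you have $0\le\tauhalf\colstaboph\le\frac\eta2 I$, so $\|I-\tauhalf\colstaboph\|\le 1$. The boundary terms and the $\ptau^2\wavevarhOneDotStar{j}$-sum are then bounded exactly as for the identity alone. Only the term $-\frac{\tau^3}{2}\sbilwavehpos((\dtau\wavevarhOneDotStar{n+1},0),\tauavg{\waveerrhn{n+1}})$ from $\colstaboph\colerrhnavg{n+1}$ should be handled by Cauchy--Schwarz in the seminorm. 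For that term your claim is correct: it is absorbed on the left and yields the $\sigma_{P,h}^{(1)}$ line and the $\tau^3$ block.

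The same objection would apply to the $\colpertoph$ correction, which is also paired with $\dtau\colerrhn{n+1}$ and could not be ``controlled by \eqref{eq:CFL}'' without the same loss. It is harmless only because it vanishes identically: the first row of $\colpertoph$ is zero. You should also record that the term $\tausqf\ipcol{(\dtau\wavevarhOneDotStar{n+1},0,0)}{\coloph(\dtau\wavevarhOneDotStar{n+1},0,0)}$ generated by the substitution vanishes by Lemma~\ref{lem:ColophAccretive}(b).
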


\begin{proof}
As in the proof of Theorem~\ref{thm:StabilityGenericRecursion}, we apply
Lemma~\ref{lem:stab_1} to the error equation \eqref{eq:full_error_equation},
sum for \(j=1,\ldots,m\), \(m\leq N\), and use the CFL condition
\eqref{eq:CFL} to obtain
\begin{equation}\label{eq:EnergyInequalityForError}
 (1-\eta)\normcol{\colerrhn{m}}^2
 +2\tau\sum_{j=1}^m\stabnormwave{\tauavg{\waveerrhn{j}}}^2
 \leq\normcol{\colerrhn{0}}^2
 +2\tau\sum_{j=1}^m\ipcol{K^j}{\colerrhnavg{j}},
\end{equation}
with
\(K^j=D_{\mathrm{CN}}^j
-\tausqf\coloph(\dtau\wavevarhOneDotStar{j},0,0)\).
For brevity, set
\(\mathcal M_h=I-\tauhalf\colstaboph-\tausqf\colpertoph\).
Since \((\dtau\wavevarhOneDotStar{j},0,0)\) is zero except in its first
component, the
skew-adjointness \eqref{eq:cond_skew_adj} gives
\(\ipcol{(\dtau\wavevarhOneDotStar{j},0,0)}
{\coloph(\dtau\wavevarhOneDotStar{j},0,0)}=0\), and inserting
the error equation \eqref{eq:full_error_equation} yields
\begin{multline*}
 -\ipcol{\coloph(\dtau\wavevarhOneDotStar{j},0,0)}{\colerrhnavg{j}}\\
 =\ipcol{(\dtau\wavevarhOneDotStar{j},0,0)}{D_{\mathrm{CN}}^j}
 -\ipcol{(\dtau\wavevarhOneDotStar{j},0,0)}
 {\mathcal M_h\dtau\colerrhn{j}}%
 -\sbilwavehpos\bigl(
 (\dtau\wavevarhOneDotStar{j},0),\tauavg{\waveerrhn{j}}\bigr).
\end{multline*}
The first row and column of \(\colpertoph\) in
\eqref{eq:DiscreteOperators:Perturbations} vanish, so that
\(\colpertoph(\dtau\wavevarhOneDotStar{j},0,0)=0\). By the symmetry of
\(\colstaboph\) and
\(\colpertoph\), the perturbation drops out, i.e.,
\begin{equation*}
 \ipcol{(\dtau\wavevarhOneDotStar{j},0,0)}
 {\mathcal M_h\dtau\colerrhn{j}}
 =\ipcol{\bigl(I-\tauhalf\colstaboph\bigr)
 (\dtau\wavevarhOneDotStar{j},0,0)}
 {\dtau\colerrhn{j}}.
\end{equation*}
By Lemma~\ref{lem:CN_defect}, the CN defect splits for every discrete
\(\coltestvarh\) with wave component \(y_h\) into
\begin{equation*}
 \ipcol{D_{\mathrm{CN}}^j}{\coltestvarh}
 =\ipcol{\mathcal R^j}{\coltestvarh}
 +\ipcol{L^j}{\coltestvarh}
 +\Delta_{s,b}(\tauavg{\wavevar}^j,\tauavg{\heatvar}^j,y_h).
\end{equation*}
We use this for \(\coltestvarh=\colerrhnavg{j}\) and for
\(\coltestvarh=(\dtau\wavevarhOneDotStar{j},0,0)\). Altogether, the last sum in
the energy inequality \eqref{eq:EnergyInequalityForError} equals \(I_1+\dots+I_5\) with
\begin{align*}
 I_1&=2\tau\sum_{j=1}^m\bigl(\ipcol{\mathcal R^j}{\colerrhnavg{j}}
      +\tausqf\ipcol{\mathcal R^j}
      {(\dtau\wavevarhOneDotStar{j},0,0)}\bigr),\\
 I_2&=2\tau\sum_{j=1}^m\ipcol{L^j}
      {\colerrhnavg{j}+\tausqf(\dtau\wavevarhOneDotStar{j},0,0)},\\
 I_3&=2\tau\sum_{j=1}^m\bigl(
      \Delta_{s,b}(\tauavg{\wavevar}^j,\tauavg{\heatvar}^j,\tauavg{\waveerrhn{j}})
      +\tausqf\Delta_{s,b}(\tauavg{\wavevar}^j,\tauavg{\heatvar}^j,
      (\dtau\wavevarhOneDotStar{j},0))\bigr),\\
 I_4&=-\frac{\tau^3}{2}\sum_{j=1}^m
      \ipcol{\bigl(I-\tauhalf\colstaboph\bigr)
      (\dtau\wavevarhOneDotStar{j},0,0)}
      {\dtau\colerrhn{j}},\\
 I_5&=-\frac{\tau^3}{2}\sum_{j=1}^m
      \ipcol{\colstaboph^{1/2}(\dtau\wavevarhOneDotStar{j},0,0)}
      {\colstaboph^{1/2}\colerrhnavg{j}}.
\end{align*}
We estimate them individually.

For \(I_1\), we apply the Cauchy-Schwarz inequality on both terms under sum.
Additionally, we apply the Young inequality with \(\gamma = 4\tau^{-2}\) in the second term and obtain the estimate
\begin{equation*}
 |I_1|
 \leq2\tau\sum_{j=1}^m\normcol{\mathcal R^j}\normcol{\colerrhnavg{j}}
 +\tau\sum_{j=1}^m\bigl(\normcol{\mathcal R^j}^2
 +\frac{\tau^4}{16}\normwaveOne{\dtau\wavevarhOneDotStar{j}}^2\bigr),
\end{equation*}

For \(I_2\), we apply \eqref{eq:CN_stabilization_pairing} separately with
\(v_h=\colerrhnavg{j}\), \(\varepsilon=1/4\), and with
\(v_h=\tausqf(\dtau\wavevarhOneDotStar{j},0,0)\), \(\varepsilon=2\).
This gives
\begin{equation*}
    |I_2|
 \leq\frac\tau4\sum_{j=1}^m\stabnormwave{\tauavg{\waveerrhn{j}}}^2
 +\frac{9\tau}8\sum_{j=1}^m\tau^2\sigma_{J,h}(\dtau\wavevarn{j})^2
 +\frac\tau8\sum_{j=1}^m\tau^4
 \stabnormwave{(\dtau\wavevarhOneDotStar{j},0)}^2.
\end{equation*}

The terms of $I_3$ are already in the form of the theorem.

For \(I_4\), we apply summation by parts, which is possible since
\(I-\tauhalf\colstaboph\) does not depend on \(j\), and obtain
\begin{align*}
 I_4
 &=-\frac{\tau^2}{2}\ipcol{\bigl(I-\tauhalf\colstaboph\bigr)
 (\dtau\wavevarhOneDotStar{m},0,0)}{\colerrhn{m}}\\
 &\quad+\frac{\tau^2}{2}\ipcol{\bigl(I-\tauhalf\colstaboph\bigr)
 (\dtau\wavevarhOneDotStar{1},0,0)}{\colerrhn{0}}\\
 &\quad+\frac{\tau^3}{2}\sum_{j=2}^m
 \ipcol{\bigl(I-\tauhalf\colstaboph\bigr)
 (\dtau^2\wavevarhOneDotStar{j},0,0)}{\colerrhn{j-1}}.
\end{align*} 
We estimate \(\|I-\tauhalf\colstaboph\|\leq1\), which follows from \(0\leq\tauhalf\colstaboph\leq\frac\eta2 I\) by \eqref{eq:CFL_for_stab}.
We estimate every individual summand by Cauchy-Schwarz and additionally apply Youngs inequality for the boundary terms.
This gives
\begin{align*}
|I_4|
    &\leq 
        \frac{\tau^4}{16}\normwaveOne{\dtau\wavevarhOneDotStar{1}}^2
            +   \normcol{\colerrhn{0}}^2\\
            &\quad+ \frac{2}{1-\eta} \frac{\tau^4}{16}
            \normwaveOne{\dtau\wavevarhOneDotStar{m}}^2
            + \frac{1-\eta}{2} \normcol{\colerrhn{m}}^2\\  
        &\quad+2 \frac{\tau^3}{4}
        \sum_{j=2}^{m} \normwaveOne{\dtau^2\wavevarhOneDotStar{j}}
            \normcol{\colerrhn{j-1}}.
\end{align*}
Note that we choose the constant for the \(j= m\) boundary term such that it can be absorbed on the left-hand side.

For \(I_5\), we once again apply Cauchy-Schwarz and Young to obtain
\begin{equation*}
 |I_5|
 \leq\frac\tau2\sum_{j=1}^m\stabnormwave{\tauavg{\waveerrhn{j}}}^2
 +\frac\tau8\sum_{j=1}^m\tau^4
 \stabnormwave{(\dtau\wavevarhOneDotStar{j},0)}^2,
\end{equation*}

Let \(E_*=\max_{0\leq j\leq N}\normcol{\colerrhn{j}}\).
Absorbing the stabilization terms and the boundary term at \(j=m\), we
arrive at
\begin{equation*}
 \frac{1-\eta}2\normcol{\colerrhn{m}}^2
 +\tau\sum_{j=1}^m\stabnormwave{\tauavg{\waveerrhn{j}}}^2
 \leq2\normcol{\colerrhn{0}}^2+2AE_*+B,
\end{equation*}
where
\begin{align*}
 A&=\tau\sum_{j=1}^N\normcol{\mathcal R^j}
   +\frac{\tau^3}{4}\sum_{j=2}^N
   \normwaveOne{\dtau^2\wavevarhOneDotStar{j}},\\
 B&=\tau\sum_{j=1}^N\Bigl(\normcol{\mathcal R^j}^2
   +\frac98\tau^2\sigma_{J,h}(\dtau\wavevarn{j})^2
   +\frac14\tau^4
   \stabnormwave{(\dtau\wavevarhOneDotStar{j},0)}^2\Bigr)\\
  &\quad+\Bigl(T+1+\frac2{1-\eta}\Bigr)\frac{\tau^4}{16}
   \max_{1\leq j\leq N}\normwaveOne{\dtau\wavevarhOneDotStar{j}}^2
   +\frac{\tau^3}{2}\sum_{j=1}^N\bigl|\Delta_{s,b}
   (\tauavg{\wavevar}^j,\tauavg{\heatvar}^j,
   (\dtau\wavevarhOneDotStar{j},0))\bigr|\\
  &\quad+2\max_{1\leq m'\leq N}\Bigl|\tau\sum_{j=1}^{m'}
   \Delta_{s,b}(\tauavg{\wavevar}^j,\tauavg{\heatvar}^j,
   \tauavg{\waveerrhn{j}})\Bigr|.
\end{align*}

Let \(m_*\in\{0,\ldots,N\}\) satisfy
\(\normcol{\colerrhn{m_*}}=E_*\). If \(m_*=0\), then
\(E_*^2=\normcol{\colerrhn{0}}^2\). Otherwise, setting \(m=m_*\) in the
preceding inequality and dropping the nonnegative stabilization term gives
\begin{align*}
 \frac{1-\eta}{2}E_*^2
 &\leq2\normcol{\colerrhn{0}}^2+2AE_*+B\\
 &\leq2\normcol{\colerrhn{0}}^2
 +\frac{1-\eta}{4}E_*^2+\frac4{1-\eta}A^2+B.
\end{align*}
Hence, in both cases,
\begin{equation*}
 E_*^2\lesssim_\eta
 \normcol{\colerrhn{0}}^2+A^2+B.
\end{equation*}
We now set \(m=N\) in the preceding energy inequality and use this bound in
\(2AE_*\leq A^2+E_*^2\). This yields
\begin{equation*}
 E_*^2+\tau\sum_{j=1}^N
 \stabnormwave{\tauavg{\waveerrhn{j}}}^2
 \lesssim_\eta\normcol{\colerrhn{0}}^2+A^2+B.
\end{equation*}
Finally, Cauchy--Schwarz and \(N\tau\leq T\) give
\begin{align*}
 A^2
 &\leq2\left(\tau\sum_{j=1}^N\normcol{\mathcal R^j}\right)^2
 +\frac{\tau^6}{8}
 \left(\sum_{j=2}^N\normwaveOne{\dtau^2\wavevarhOneDotStar{j}}\right)^2\\
 &\leq2T\tau\sum_{j=1}^N\normcol{\mathcal R^j}^2
 +\frac{T\tau^5}{8}\sum_{j=2}^N
 \normwaveOne{\dtau^2\wavevarhOneDotStar{j}}^2.
\end{align*}
Consequently,
\begin{equation*}
 E_*^2+\tau\sum_{j=1}^N\stabnormwave{\tauavg{\waveerrhn{j}}}^2
 \lesssim\normcol{\colerrhn{0}}^2
 +\tau\sum_{j=2}^N\tau^4
 \normwaveOne{\dtau^2\wavevarhOneDotStar{j}}^2+B.
\end{equation*}

Finally, we insert the bounds of Lemma~\ref{lem:CN_defect} for
\(\mathcal R^j\) and of Lemma~\ref{lem:CNLF_defect} for
\(\dtau\wavevarhOneDotStar{j}\),
\(\dtau^2\wavevarhOneDotStar{j}\), and
\(\stabnormwave{(\dtau\wavevarhOneDotStar{j},0)}\).
This proves the claim.
\end{proof}

\section{Application to examples and numerical experiments}
\label{sec:application}
We now apply the error estimates from Sections~\ref{sec:space} and
\ref{sec:full} to the examples introduced above.
Throughout, discrete derivatives are bounded by the maximum norm of their
continuous counterparts without an additional constant.
See, for example, \cite[Lemma~4.2]{DoeN24*}.

To apply Theorem~\ref{thm:full} to the examples below, we 
decompose \(\mathcal D_N^{\mathrm{pair}}=\mathcal P_N+\mathcal Q_N\) with
\begin{align*}
 \mathcal P_N&=\max_{1\leq m\leq N}\left|
 \tau\sum_{j=1}^m
 \Delta_{s,b}(\tauavg{\wavevar}^j,\tauavg{\heatvar}^j,
 \tauavg{\waveerrhn{j}})\right|,\\
 \mathcal Q_N&=\tau\sum_{j=1}^N\tau^2
 |\Delta_{s,b}(\tauavg{\wavevar}^j,\tauavg{\heatvar}^j,
 (\dtau\wavevarhOneDotStar{j},0))|.
\end{align*}
We denote the left-hand side of the estimate in Theorem~\ref{thm:full}
by \(\mathcal E_N\). For a target spatial rate \(r_h\), the theorem reduces
the fully discrete estimate to two bounds:
\begin{equation}\label{eq:app_verification}
 \mathcal P_N\leq\varepsilon\mathcal E_N
 +C_\varepsilon(r_h^2+\tau^4),\qquad
 \mathcal D_N^{\mathrm{space}}+\mathcal Q_N
 +\tau^4\mathcal D_N^{\mathrm{bounded}}
 \lesssim r_h^2+\tau^4.
\end{equation}
Together with an initial error of order \(r_h\), these imply
\(\mathcal E_N^{1/2}\lesssim r_h+\tau^2\) after choosing
\(\varepsilon\) sufficiently small. The main choice is how to estimate
\(\mathcal P_N\): a consistency bound in the energy and stabilization norms
can be used directly, whereas a coupling term that differentiates an
uncontrolled error component calls for summation by parts in time.
For \(\mathcal Q_N\), we separate the projected exact derivative from the
reconstruction error using \eqref{eq:app_reconstructed_derivative_split}.
The factor \(\tau^4\) in the second bound means that the defects in
\(\mathcal D_N^{\mathrm{bounded}}\) only need uniform estimates.
Throughout this section, the meshes are shape-regular and quasi-uniform, the
polynomial degrees and material parameters are fixed, and the CFL condition in Assumption~\ref{ass:CFL} is satisfied.
We also use the decomposition of the reconstructed derivative
\begin{equation}\label{eq:app_reconstructed_derivative_split}
 \dtau\wavevarhOneDotStar{j}
 =\projwave^1\dtau\pt\wavevarnOne{j}
 +\bigl(\dtau\wavevarhOneDotStar{j}
 -\projwave^1\dtau\pt\wavevarnOne{j}\bigr).
\end{equation}
All bounds are uniform for \(N\tau\leq T\).

\subsection{Thermo-elastic string}
\label{sec:app_waveheat}

    For the example in Section~\ref{sec:ana_thermoelastic_string}, we employ continuous finite elements of order $\ell \geq 1$ to the heat and $k\geq 1$ to the wave part, and thus no stabilization is used. 
In addition, we apply a conforming method and thus use the continuous bilinear forms also in the discrete case, 
and allow for $k \leq \ell +1$.
In particular,
Assumptions~\ref{ass:discr_forms_skew_stab} and \ref{ass:consistency} are satisfied. 
Concerning the projections, we employ the Ritz projection $\projRitz$ and the $L^2$-projection $\projLtwo$ via
$\projwave = (\projLtwo,\projRitz)$ and $\projheat = \projLtwo$.
For the reference operators, we choose 
$\projwaveRef = (\projRitz , \projRitz)$
and
$\projheatRef  =  \projRitz$.
All the upcoming approximation results can be found in the book
\cite{BreS08}.
For the initial values and the right-hand side, we employ the interpolation $\interpolation$.
To shorten notation for linear elements, we set $k^* = \max \{k,2\}$,
and can thus conclude for the initial data and the sources 
\begin{align*}
     \| \waveerrh(0) \|_{\wavespaceh}
          +
     \| \heaterrh(0) \|_{\wavespaceh} 
     &\lesssim
     h^k
     \bigl( 
        |\pt u |_{H^{k}}
        +
        |\pt^2 u |_{H^{k^*}}
        \bigr) 
        +
        h^{\ell +1} |\pt \heatvar |_{H^{\ell+1}},
        \\
    \|\Delta f \|_{\wavespaceh^*}
+
    \|\Delta g\|_{\heatspaceh^*}
    &\lesssim
        h^{k+1}  |f |_{H^{k+1}}     
        +
        h^{\ell+1} 
        |g|_{H^{\ell+1}}
        .
    \end{align*}
By the choice of the reference operators, we also obtain
\begin{align*}
      \|\Delta \ipwave (\pt \wavevar)\|_{\wavespaceh^*}
      +
        \|\Delta \ipheat (\pt \heatvar)\|_{\heatspaceh^*}
       \lesssim
       h^{k+1}
        |\pt^2 u |_{H^{k+1}}
        +
        h^{\ell+1} 
        |\pt \heatvar |_{H^{\ell+1}}
        .
\end{align*}
For this problem, we use the fact that $\wavespaceh \subset \wavespaceV$ holds, and thus obtain
\begin{align*}
   | \int_0^t  \Delta_{s,b} ( \wavevar, \heatvar, \waveerrh)  \,ds |
     &=
     | \int_0^t   \sbilwaveh( \projwaveRef \wavevar, \waveerrh)
    +
    \coupopfullformh( \projheatRef \heatvar, \waveerrh)
    -
     \sbilwave(  \wavevar, \waveerrh)
     -
    \coupopfullform(  \heatvar, \waveerrh) 
    \,ds |
    \\
    &=
    | \int_0^t  \coupopfullformext( \projheatRef \heatvar - \heatvar, \waveerrh) \,ds |
    =
    | \int_0^t 
      \langle
   \projheatRef \heatvar - \heatvar , \partial_x \waveerrhOne
    \rangle_{L^2} \,ds |,
    \end{align*}    
    where we used the definition of $ \projwaveRef$ in the second last step, and  integration by parts in space in the last one.
    The error equation implies
    $
     \pt \waveerrhTwo = \waveophOne \waveerrhOne
    $,
    where in this example $ \waveophOne = I$.   
    Furthermore,
    integration by parts in time leads to
\begin{align*}
   | \int_0^t  \Delta_{s,b} ( \wavevar, \heatvar&, \waveerrh)  \,ds |\\
     &\lesssim
     h^{\ell+1}  |\pt \heatvar |_{C^1([0,t];H^{\ell+1})}
     \bigl(
    \int_0^t 
       \| \waveerrh \|_{\wavespaceh}
      \,ds 
         +
       \| \waveerrh (t) \|_{\wavespaceh} + \| \waveerrh (0) \|_{\wavespaceh}
     \bigr),
    \end{align*}
For the fully discrete estimate, we seek the rate
\(r_h=h^k+h^{\ell+1}\). The obstruction is \(\mathcal P_N\): the
coupling defect contains a spatial derivative of the first wave-error
component, which the energy does not control. As in the semidiscrete
argument, we move the time sum onto the heat projection error. Set
\(z^j=(\projheatRef-I)\tauavg{\heatvar}^j\) and
\(W^m=\tau\sum_{j=1}^m\tauavg{\waveerrhnOne{j}}\), with \(W^0=0\).
Then summation by parts gives, for \(1\leq m\leq N\),
\begin{align*}
 &\tau\sum_{j=1}^m\Delta_{s,b}
 (\tauavg{\wavevar}^j,\tauavg{\heatvar}^j,\tauavg{\waveerrhn{j}})\\
 &\quad=\sum_{j=1}^m\langle z^j,\partial_x(W^j-W^{j-1})\rangle_{L^2}\\
 &\quad=\langle z^m,\partial_xW^m\rangle_{L^2}
 -\tau\sum_{j=1}^{m-1}\langle\dtau z^{j+1},\partial_xW^j\rangle_{L^2}.
\end{align*}
It remains to bound the sum involving $\partial_x \tauavg{\waveerrhnOne{k+1}}$.
We use the error equation in \eqref{eq:full_error_equation}, project to the second component (denoted by $\pi_2$) and sum from 
$\ell = 0,\ldots , n$ 
to obtain
\begin{equation} \label{eq:error_eqfor_Sh1eh1}
\begin{aligned} 
   \tau \sum_{\ell = 0}^n \bigl(\waveophOne & \tauavg{\waveerrhnOne{\ell+1}}
   +
   \waveophposTwo \tauavg{\waveerrhnTwo{\ell+1}} \bigr)
    =
    (1 - \tauhalf \waveophposTwo  -  \tausqf \waveophOne \waveophTwo ) (  \waveerrhnTwo{0} - \waveerrhnTwo{n+1} ) 
    \\
    &+
     \tausqf \waveophOne \coupoph ( \heaterrhn{0} - \heaterrhn{n+1} ) 
    +
    \tau \sum_{\ell = 0}^n \pi_2 D_{\text{CN}}^{\ell+1}  
    -
    \tausqf \waveophOne
    ( \wavevarhOneDotStar{n+1}
    - \wavevarhOneDotStar{0}
    ), 
    \end{aligned}
\end{equation}
using the notation from Lemma~\ref{lem:CNLF_defect}. For the specific example here, we have no stabilization, and in particular $ \waveophOne = I$ 
and 
$\| \partial_x \pi_2 D_{\text{CN}}^{n+1}\|_{L^2} = \mathcal{O}(h^k + \tau^2)$. Using the CFL conditions \eqref{eq:CFL}, we obtain
\begin{align*} 
  \| \partial_x \tau \sum_{\ell = 0}^n \waveophOne  \tauavg{\waveerrhnOne{\ell+1}} \|_{L^2}
&\lesssim  %
  \|  \waveerrhnTwo{0}    \|_{H_0^1} + \|  \waveerrhnTwo{n+1}    \|_{H_0^1}
    +
    \|  \heaterrhn{0}    \|_{L^2} + \|  \heaterrhn{n+1}    \|_{L^2} + h^{k} + \tau^2   
    \\
    &+
    \tausqf  (  \| \wavevarhOneDotStar{n+1}
    -  \projwave^1 \pt \wavevarnOne{n+1}   \|_{H_0^1}
    + 
    \| \wavevarhOneDotStar{0}
    -  \projwave^1 \pt \wavevarnOne{0}  \|_{H_0^1}  )
    \\
    &+
    \tausqf   \|  \projwave^1 \pt \wavevarnOne{n+1}   \|_{H_0^1} 
    +
    \tausqf   \|  \projwave^1 \pt \wavevarnOne{0}   \|_{H_0^1}   
    \\
    \lesssim 
      \|  \waveerrhnTwo{0}  &  \|_{H_0^1} 
      +
       \|  \waveerrhnTwo{n+1}    \|_{H_0^1}
    +
    \|  \heaterrhn{0}    \|_{L^2} + \|  \heaterrhn{n+1}    \|_{L^2}
    + h^{k} + h^{\ell+1}    + \tau^2   .
\end{align*}
The CFL condition ensures that
\(\tau^2\waveophOne\coupoph\) is uniformly bounded from the heat
\(L^2\)-space into the second wave \(H_0^1\)-space. Consequently, the
temperature contribution in this estimate only involves the \(L^2\)-error.
The summation-by-parts formula above and the initial-error bound therefore give,
for every \(\varepsilon>0\),
\[
 \mathcal P_N\lesssim h^{\ell+1}
 \bigl(\mathcal E_N^{1/2}+h^k+h^{\ell+1}+\tau^2\bigr)
 \leq\varepsilon\mathcal E_N
 +C_\varepsilon\bigl(h^{2k}+h^{2\ell+2}+\tau^4\bigr).
\]
We next estimate the remaining terms in Theorem~\ref{thm:full}.
Using the definition of $\projheatRef$ we compute 
\begin{align*}
   \|
   \widetilde{\Delta}_{s,b} ( \wavevar, \heatvar)
    \|_{\heatspaceh^*}
    &\leq
       \|
   \sbilheat( \projheatRef \heatvar - \heatvar , \cdot ) 
    \|_{\heatspaceh^*}
    +
        \|
    \coupopfullform( \cdot , \projwaveRef \wavevar - \wavevar)
    \|_{\heatspaceh^*}
    \lesssim
    h^{k}
    | \pt u |_{H^{k+1}}.
    \end{align*}
Since the
method is conforming, both stabilization seminorms vanish. Moreover, Ritz orthogonality and
the source approximation give,
for \(a=1,2\) at the available time indices,
\[
 \|\Delta^s_{s,b}(\dtau^a\wavevarn{j},\dtau^a\heatvarn{j})\|_{\wavespaceh^*}
 +\|\Delta\dtau^a f^j\|_{\wavespaceh^*}
 \lesssim h^\ell+h^{k+1}.
\]
In particular,
\[
 \normwaveOne{\dtau\wavevarhOneDotStar{j}
 -\projwave^1\dtau\pt\wavevarnOne{j}}
 \lesssim h^\ell+h^{k+1}.
\]
The \(H^1\)-stability of the \(L^2\)-projection and the inverse inequality yield
\begin{align*}
 \|\partial_x\dtau\wavevarhOneDotStar{j}\|_{L^2}
 &\leq\|\partial_x\projwave^1\dtau\pt\wavevarnOne{j}\|_{L^2}\\
 &\quad+\|\partial_x(\dtau\wavevarhOneDotStar{j}
 -\projwave^1\dtau\pt\wavevarnOne{j})\|_{L^2}\\
 &\lesssim\|\dtau\pt\wavevarnOne{j}\|_{H^1}
 +h^{-1}(h^\ell+h^{k+1})\lesssim1,\\
 \mathcal Q_N
 &\leq\tau\sum_{j=1}^N\tau^2
 \|(\projheatRef-I)\tauavg{\heatvar}^j\|_{L^2}
 \|\partial_x\dtau\wavevarhOneDotStar{j}\|_{L^2}
 \lesssim\tau^2h^{\ell+1}
 \lesssim\tau^4+h^{2\ell+2}.
\end{align*}
The mass, heat, and source defects have the spatial rates established above.
Applying the same estimates to time differences gives
\[
 \mathcal D_N^{\mathrm{space}}
 \lesssim h^{2k}+h^{2\ell+2}
 +\tau^3(h^{2\ell}+h^{2k+2})
 \lesssim r_h^2+\tau^4.
\]
The first and second time differences and exact derivatives are uniformly bounded,
so \(\mathcal D_N^{\mathrm{bounded}}\lesssim1\). Together with the
estimates for \(\mathcal P_N\) and \(\mathcal Q_N\), this verifies
\eqref{eq:app_verification}.

Combining these estimates, 
we obtain for a sufficiently smooth solution the spatial discretization error bound 
of optimal order 
    \begin{equation*}
        \| u(t) - u_h(t)\|_{H_0^1} 
        +
        \| \pt u(t) - \pt u_h(t)\|_{L^2} 
      +
    \| \heatvar(t) - \heatvarh(t)\|_{L^2} 
     \lesssim
      h^{k} + h^{\ell+1},
    \end{equation*}
with constants independent of $h$,
and the full discretization error bound
      \begin{equation*}
    \| u(t_n) -  u^n\|_{H_0^1} 
        +
        \| \pt u(t_n) -  v_h^n\|_{L^2} 
        +    
    \| \heatvar(t_n) -  \heatvarh^n\|_{L^2} 
\lesssim
 \tau^2 +       h^{k} + h^{\ell+1}
    \end{equation*}
with constants independent of $h$ and $\tau$,
where the last estimate holds under the CFL condition \eqref{eq:CFL}.
Note that these estimates suggest the optimal choice $\ell = k-1$ for $k\geq 2$.

\subsection{Non-local in time visco-elasticity and acoustics.}
\label{app:ana_ell_wave}

For the example in Section~\ref{sec:ana_ell_wave}, 
we only discuss the visco-elastic case, since the acoustic equation can be treated by minor modifications of the considerations below.
We apply discontinuous elements for the velocity part and continuous elements for the pressure part. 
Further, the coupling term is discretized in a conforming way, by leaving the derivative on the continuous elements. 
For the bilinear forms $\ipwaveh$, $\ipheath$, and $\sbilheath$
we simply use the restriction of the continuous forms to the (broken) polynomial spaces.
In particular, we choose broken polynomial spaces of order $k$ and $\div$-conforming elements of order $\ell \leq k+1$, for example Raviart--Thomas elements, such that the image under the divergence is contained in the broken polynomial space of degree $\ell-1$.
For the bilinear form $\sbilwaveh$, we follow the discretization of Section~4 in \cite{HocPSTW15},
using the discrete operator $A_h$ there to define
$\sbilwaveh(x_h,y_h) = ( A_h x_h,y_h)$, which we then decompose as 
$\sbilwaveh = \sbilwavehskew + \sbilwavehpos$,
see the definitions in \eqref{eq:SkewFormGlobal} and \eqref{eq:PosFormGlobal}
in  Appendix~\ref{sec:app_1}. 
Further, Lemma~\ref{lem:proof_ass_discr_forms_skew_stab} implies that Assumption~\ref{ass:discr_forms_skew_stab} is satisfied.
We choose all projections for the wave part as the component-wise broken
$L^2$-projections $\pi_h$. The corresponding initial and consistency errors
vanish, that is,
\[
\waveerrh(0) = \Delta \ipwave (\pt \wavevar) = \Delta f = 0.
\]
The projection $ \projheatRef $ for the heat part is chosen
as the $\div$-conforming interpolation which satisfies a commuting diagram property with the broken polynomial spaces.
The initial values and
the right-hand side are computed by this interpolation, and thus it holds
\begin{align*}
      \| \heaterrh(0) \|_{\wavespaceh}
     +
    \|\Delta g\|_{\heatspaceh^*}
     \lesssim h^{\ell} 
     \bigl(
         \| \heatvar  \|_{H^{\ell}} 
     + 
     \| g  \|_{H^{\ell}} 
     \bigr).
    \end{align*}
Further, we have
\begin{align*}
  |   \widetilde{\Delta}_{s,b} ( \wavevar, \heatvar,\varphi_h ) 
  |
    &\leq
    |   \sbilheat( \projheatRef \heatvar - \heatvar, \varphi_h)
    |
    +
    |
    \coupopfullform( \varphi_h ,  \wavevar - \projwaveRef \wavevar)
    |
    \\
    &\lesssim
\| \projheatRef \heatvar - \heatvar \|_{L^2}
\| \varphi_h \|_{L^2}
    \\
    &\lesssim
    h^{\ell}
    \| \heatvar \|_{H^{\ell}}
\| \varphi_h \|_{L^2} ,
\end{align*}
where we used the best-approximation property, see e.g., \cite[Theorem~5.25]{Mon03}, as well as
\begin{align*}
    \coupopfullform( \varphi_h ,  \wavevar - \projwaveRef \wavevar)
 &=
\sum\limits_{i=1}^G   \langle
        \div \sigmabold_{h,i}  , \pi_h \vbold - \vbold 
        \rangle_{L^2}
        =
        0 ,
\end{align*}
since $\div \sigmabold_{h,i}$ is an element of the broken polynomial space of degree $\ell-1 \leq k$, and hence orthogonal to the projection error.
For the differential parts, we use 
Assumption~\ref{ass:consistency}
and
obtain
\begin{align*}
     |\Delta_{s,b} ( \wavevar, \heatvar, \waveerrh) |
     &=
    | \sbilwaveh( \projwaveRef \wavevar, \waveerrh)
    +
    \coupopfullformh( \projheatRef \heatvar, \waveerrh)
    -
    \sbilwaveext( \wavevar, \waveerrh)
    - \coupopfullformext(\heatvar , \waveerrh) |
    \\
    &\leq
    | \sbilwaveh( \projwaveRef \wavevar -\wavevar, \waveerrh)
    |
    +
    |\coupopfullformh( \projheatRef \heatvar - \heatvar, \waveerrh) | .
\end{align*}
For the error in $\sbilwaveh$ we employ Lemma~\ref{lem:EllWaveApproxLemma},
and finally, we have by the commuting diagram property
\begin{align*}
 |   \coupopfullformh( \projheatRef \heatvar - \heatvar, \waveerrh)
  |
    &=
  |
\sum\limits_{i=1}^G   \langle
        \div ( \projheatRef - I )\sigmabold_{i}  , \waveerrhOne
        \rangle_{L^2}
   |
       \\
        &=
   |
\sum\limits_{i=1}^G   \langle
        ( \pi_h - I ) \div \sigmabold_{i}  , \waveerrhOne
        \rangle_{L^2}
        |
        \\
        &\lesssim
         h^{\ell}
        |\waveerrh |_{\wavespaceh}
    \sum\limits_{i=1}^G | \div \sigmabold_{i} |_{H^{k+1}}.
\end{align*}
For the fully discrete estimate, the target is
\(r_h=h^\ell+h^{k+1/2}\). Here \(\mathcal P_N\) follows directly from
the spatial consistency estimate: for any discrete \(y_h\),
\[
 |\Delta_{s,b}(\tauavg{\wavevar}^j,\tauavg{\heatvar}^j,y_h)|
 \lesssim h^{k+1/2}\stabnormwave{y_h}+h^\ell\normwave{y_h}.
\]
The two terms pair with the stabilization and energy parts of
\(\mathcal E_N\), respectively. Summing in time and applying Young's
inequality gives, for every \(\varepsilon>0\),
\[
 \mathcal P_N\leq\varepsilon\mathcal E_N
 +C_\varepsilon r_h^2.
\]
To control \(\mathcal Q_N\), we also need the reconstruction error and
the stabilization consistency estimates. The heat interpolation yields
\(\|\Delta\ipheat(\dtau\heatvarn{j})\|_{\heatspaceh^*}\lesssim h^\ell\).
The jump formula \eqref{eq:PosFormGlobal} and the face approximation estimate
\eqref{eq:LTwoApproxProp} give spatial decay for the jump term in
\(\mathcal D_N^{\mathrm{space}}\) and a uniform bound for the projection
stabilization term in \(\mathcal D_N^{\mathrm{bounded}}\):
\begin{align*}
 \sigma_{J,h}(v)&\lesssim h^{k+1/2}\|v\|_{H^{k+1}(\dGMesh)},\qquad\,\,
 \tau\sum_{j=1}^N\tau^2\sigma_{J,h}(\dtau\wavevarn{j})^2
    \lesssim\tau^2h^{2k+1},\\
 \sigma_{P,h}^{(1)}(w)&\lesssim h^{k+1/2}\|w\|_{H^{k+1}(\dGMesh)},\qquad
 \tau\sum_{j=1}^N\sigma_{P,h}^{(1)}(\dtau\pt\wavevarnOne{j})^2
    \lesssim h^{2k+1}.
\end{align*}
Set \(R_h=h^k+h^\ell\). The face inverse inequality and the preceding
consistency estimates imply, for \(a=1,2\),
\begin{align*}
 \|\Delta^s_{s,b}(\dtau^a\wavevarn{j},\dtau^a\heatvarn{j})\|_{\wavespaceh^*}
 &\lesssim R_h,\qquad \Delta\dtau^a f^j=0,\\
 \normwaveOne{\dtau\wavevarhOneDotStar{j}
 -\projwave^1\dtau\pt\wavevarnOne{j}}&\lesssim R_h,\qquad
 \normwaveOne{\dtau\wavevarhOneDotStar{j}}\lesssim1,\\
 \stabnormwave{(\dtau\wavevarhOneDotStar{j},0)}
 &\leq\stabnormwave{(\dtau\wavevarhOneDotStar{j}
 -\projwave^1\dtau\pt\wavevarnOne{j},0)}
 +\sigma_{P,h}^{(1)}(\dtau\pt\wavevarnOne{j})
 \\
 &\lesssim h^{-1/2}R_h+h^{k+1/2}.
\end{align*}
Using these bounds in the reconstructed-derivative split gives
\begin{align*}
 \mathcal Q_N
 &\lesssim\tau^2\bigl(h^kR_h+h^{2k+1}+h^\ell\bigr)
 \lesssim\tau^4+r_h^2.
\end{align*}
The mass and consistency estimates above, applied also to time differences,
control the remaining spatial defects:
\[
 \mathcal D_N^{\mathrm{space}}
 \lesssim r_h^2+\tau^3R_h^2+\tau^2h^{2k+1}
 \lesssim r_h^2+\tau^4.
\]
The first and second time differences, exact derivatives, and projection
stabilization term are uniformly bounded, so
\(\mathcal D_N^{\mathrm{bounded}}\lesssim1\). This verifies
\eqref{eq:app_verification}.

Combining the preceding bounds, we obtain for a sufficiently smooth solution
the spatial discretization error estimate
of optimal order
    \begin{equation*}
        \| \vbold(t) - \vbold_h(t)\|_{L^2} 
        +
        \| \sigmabold_0 (t) - \sigmabold_{0,h} (t)\|_{L^2} 
      +
    \sum\limits_{i = 1}^G
    \| \sigmabold_i (t) - \sigmabold_{i,h} (t)\|_{L^2} 
     \lesssim
     h^\ell + h^{k+1/2} ,
    \end{equation*}
with constants independent of $h$,
and the full discretization error bound
\begin{equation*}
\| \vbold(t_n) - \vbold_h^n\|_{L^2} 
+
\| \sigmabold_0 (t_n) - \sigmabold_{0,h}^n\|_{L^2} 
+
\sum\limits_{i = 1}^G
\| \sigmabold_i (t_n) - \sigmabold_{i,h}^n \|_{L^2} 
\lesssim
\tau^2 
+
h^\ell + h^{k+1/2},
\end{equation*}
with constants independent of $h$ and $\tau$,
where the last estimate holds under the CFL condition \eqref{eq:CFL}.
The typical choices for $\ell$ are $k$ or $k+1$.

\subsection{Thermo-elastic wave coupling}
\label{sec:app_thermo_elasto}

For the third example in Section~\ref{sec:ana_thermo_wave}, we use the wave
discretization from Section~\ref{app:ana_ell_wave}. The component-wise broken
$L^2$-projections again eliminate the corresponding initial and consistency
errors:
\[
\waveerrh(0) = \Delta \ipwave (\pt \wavevar) = \Delta f = 0.
\]
For the heat part we employ (continuous) Lagrange finite elements of 
order $\ell\leq k+1$
and choose for $\projheatRef$ the Ritz projection, and for the initial values and the right-hand side the nodal interpolation, which directly yields
\begin{align*}
      \| \heaterrh(0) \|_{\wavespaceh}
     +
    \|\Delta g\|_{\heatspaceh^*}
     \lesssim h^{\ell+1} 
     \bigl(
         \| \heatvar  \|_{H^{\ell+1}} 
     + 
     \| g  \|_{H^{\ell+1}} 
     \bigr).
    \end{align*}
Further, we have
\begin{align*}
  |   \widetilde{\Delta}_{s,b} ( \wavevar, \heatvar,\varphi_h ) 
  |
    &\leq
    |   \sbilheat( \projheatRef \heatvar - \heatvar, \varphi_h)
    |
    +
    |
    \coupopfullform( \varphi_h ,  \wavevar - \projwaveRef \wavevar)
    |
=0 ,
\end{align*}
using the definition of the Ritz projection $\projheatRef$ and again the fact that $\nabla  \varphi_h$ is in the space of broken polynomials of order $\ell-1\leq k$, such that the orthogonality 
\begin{align*}
    \coupopfullform( \varphi_h ,  \wavevar - \projwaveRef \wavevar)
 &=
 \alpha \langle
        \nabla \varphi_h  ,   \vbold - \pi_h \vbold 
        \rangle_{L^2}
        =
        0 
\end{align*}
holds. Using the estimate in Lemma~\ref{lem:EllWaveApproxLemma} for the upwind part, it remains to bound
\begin{align*}
     |\coupopfullformh( \projheatRef \heatvar - \heatvar, \waveerrh) |
     =
     |\langle
        \alpha 
        \nabla  (\projheatRef \heatvar - \heatvar) ,\waveerrhOne
        \rangle_{L^2}
        |
        \lesssim
h^\ell
  \| \heatvar  \|_{H^{\ell+1}} 
   \| \waveerrh  \|_{\wavespaceh}  .
\end{align*}
We can further improve this as in Section~\ref{sec:app_waveheat}
by first using the relation \(\nabla \theta = \div(I\cdot \theta)\) in
   \begin{align*}
    |\coupopfullformh( \projheatRef \heatvar - \heatvar, \waveerrh) |
    &=
    |\langle
    \alpha 
    \div \bigl( I \cdot (\projheatRef \heatvar - \heatvar) \bigr) ,\waveerrhOne
    \rangle_{L^2}
    |
    =
    \alpha 
    |\sbilwavehTwo(
    I \cdot (\projheatRef \heatvar - \heatvar)  ,\waveerrhOne
    )
    | .
\end{align*}
The latter identification holds due to the continuity of \(\projheatRef \heatvar - \heatvar\),
since all jump terms in the definition of $\sbilwaveh$ in \eqref{eq:SkewFormGlobal} vanish.
By skew-adjointness, we further simplify
\begin{align*}
    |\coupopfullformh( \projheatRef \heatvar - \heatvar, \waveerrh) |
    &=
    \alpha 
    | \sbilwavehOne(
    \waveerrhOne, I \cdot (\projheatRef \heatvar - \heatvar)
    )
    |
    =
    \alpha 
    | \ipwavehTwo  (
    \waveophOne \waveerrhOne, I \cdot (\projheatRef \heatvar - \heatvar)
    )
    |,
\end{align*}
and then obtain as before using integration by parts
   \begin{align*}
    & | \int_0^t \coupopfullformh( \projheatRef \heatvar(s) - \heatvar(s), \waveerrh(s)) \,ds |
    \\&=
    \alpha 
    | \int_0^t \ipwavehTwo  (
    \waveophOne \waveerrhOne (s), I \cdot (\projheatRef \heatvar (s)- \heatvar(s))
    )
    \,ds
    |
    \\
    &\leq
    \alpha 
    | \ipwavehTwo  (
    \int_0^{t}  \waveophOne \waveerrhOne (s_1) \,d{s_1} , I \cdot (\projheatRef \pt \heatvar (t)- \pt \heatvar(t))
    )
    \\
    &\quad
    +
    \alpha 
    | \int_0^t \ipwavehTwo  (
    \int_0^{s}  \waveophOne \waveerrhOne (s_1) \,d{s_1} , I \cdot (\projheatRef \pt \heatvar (s)- \pt \heatvar(s))
    )
    \,ds
    | .
\end{align*}
The error equation
$ 
\pt \waveerrhTwo = \waveophOne \waveerrhOne + \waveophposTwo \waveerrhTwo 
+
\Delta_{s,b} ( \wavevar, \heatvar, \cdot  )
$
and the estimate in Lemma~\ref{lem:EllWaveApproxLemmaTwo} imply
\begin{align*}
	& | \Delta_{s,b} (
	 \wavevar, \heatvar,
	( 0,  I \cdot (\projheatRef \pt \heatvar - \pt \heatvar ) )
	)  |
	\\
	&\leq 
	| \sbilwaveh( \projwaveRef \wavevar -\wavevar,
	( 0,  I \cdot (\projheatRef \pt \heatvar - \pt \heatvar ) )
	   )
	|
	+
	|\coupopfullformh( \projheatRef \heatvar - \heatvar,
	 ( 0,  I \cdot (\projheatRef \pt \heatvar - \pt \heatvar ) ) 
	  ) |
	\\
	&\lesssim 
	h^{k + \ell + 1} + h^{2\ell +1}.
\end{align*}
This leads to the bound
  \begin{align*}
    & | \int_0^t \coupopfullformh( \projheatRef \heatvar(s) - \heatvar(s), \waveerrh(s)) \,ds |
    \lesssim
    h^{\ell +1}
    \bigl(
    \| \waveerrhTwo(t)  \|_{\wavespacehTwo}
    +
    \| \waveerrhTwo(0)  \|_{\wavespacehTwo}
    +
    h^{k}
    +
    h^{\ell}
     \bigr) .
\end{align*}
For the fully discrete estimate, the target is
\(r_h=h^{k+1/2}+h^{\ell+1/2}\). The upwind part of
\(\mathcal P_N\) pairs directly with the stabilization seminorm, as in
the previous example. The coupling part again differentiates the wave
error, so we use the summation-by-parts argument from
Section~\ref{sec:app_waveheat}. Its remaining quantity is
$
 h^{\ell+1}
   \| \tau \sum_{\ell = 0}^n \waveophOne \tauavg{\waveerrhnOne{\ell+1}} \|_{\wavespaceVTwo^*}
$.
We use the representation in \eqref{eq:error_eqfor_Sh1eh1} to estimate
\begin{align*}
    h^{\ell+1}
   \| \tau \sum_{\ell = 0}^n \waveophOne \tauavg{\waveerrhnOne{\ell+1}} \|_{\wavespaceVTwo^*}
    &\lesssim
    h^{\ell +1}
    \bigl(
    \| \colerrhn{n+1} \|_{\ipwave \times \ipheat}
    +
    \| \colerrhn{0} \|_{\ipwave \times \ipheat}
    + h^{k}
    + h^\ell
    +\tau^2
     \bigr) ,
\end{align*}
along the lines of Section~\ref{sec:app_waveheat}.
Together with the corresponding summation-by-parts identity and the upwind
consistency bound, this gives
\[
 \mathcal P_N\lesssim
 (h^{k+1/2}+h^{\ell+1})\mathcal E_N^{1/2}
 +h^{\ell+1}(h^k+h^\ell+\tau^2)
 \leq\varepsilon\mathcal E_N
 +C_\varepsilon\bigl(h^{2k+1}+h^{2\ell+1}+\tau^4\bigr)
\]
for every \(\varepsilon>0\).
For the fully discrete analysis, the jump estimates from
Section~\ref{app:ana_ell_wave} give
\[\tau\sum_{j=1}^N\tau^2\sigma_{J,h}(\dtau\wavevarn{j})^2
\lesssim\tau^2h^{2k+1},\qquad
\tau\sum_{j=1}^N\sigma_{P,h}^{(1)}(\dtau\pt\wavevarnOne{j})^2
\lesssim h^{2k+1},\]
while the heat
mass defect satisfies
\(\|\Delta\ipheat(\dtau\heatvarn{j})\|_{\heatspaceh^*}\lesssim h^{\ell+1}\).
Set \(R_h=h^k+h^\ell\). The same inverse estimates yield
\begin{align*}
 \|\Delta^s_{s,b}(\dtau^a\wavevarn{j},\dtau^a\heatvarn{j})\|_{\wavespaceh^*}
 &\lesssim R_h\quad(a=1,2),\\
 \normwaveOne{\dtau\wavevarhOneDotStar{j}
 -\projwave^1\dtau\pt\wavevarnOne{j}}&\lesssim R_h,\\
 \stabnormwave{(\dtau\wavevarhOneDotStar{j},0)}
 &\lesssim h^{-1/2}R_h+h^{k+1/2}.
\end{align*}
For \(\mathcal Q_N\), the projected exact derivative in
\eqref{eq:app_reconstructed_derivative_split} needs an extra spatial
integration by parts to gain the heat-projection order. Set
\(\zeta^j=(\projheatRef-I)\tauavg{\heatvar}^j\). Elementwise integration
by parts gives
\begin{align*}
 &\left|\left\langle\nabla\zeta^j,
 \projwave^1\dtau\pt\wavevarnOne{j}\right\rangle_{L^2(\Omega)}\right|\\
 &\quad=\Biggl|
 -\sum_{K\in\dGMesh}\left\langle\zeta^j,
 \div(\projwave^1\dtau\pt\wavevarnOne{j})\right\rangle_{L^2(K)}
 +\sum_{F\in\dGFaceInt}\left\langle\zeta^j,
 \dGJmp{\projwave^1\dtau\pt\wavevarnOne{j}}_F\cdot n_F
 \right\rangle_{L^2(F)}\Biggr|\\
 &\quad\lesssim h^{\ell+1}+h^{\ell+1/2}h^{k+1/2}
 \lesssim h^{\ell+1}.
\end{align*}
The reconstruction-error contribution satisfies
\begin{equation*}
 \left|\left\langle\nabla\zeta^j,
 \dtau\wavevarhOneDotStar{j}
 -\projwave^1\dtau\pt\wavevarnOne{j}
 \right\rangle_{L^2(\Omega)}\right|
  \lesssim h^\ell R_h.
\end{equation*}
The boundary term vanishes because \(\zeta^j=0\) on the boundary, and the
jumps of \(\projwave^1\dtau\pt\wavevarnOne{j}\) are projection errors because
\(\dtau\pt\wavevarnOne{j}\) is continuous. Combining these observations with
the upwind consistency bound
yields
\begin{align*}
 \mathcal Q_N
 &\lesssim\tau^2\bigl(h^kR_h+h^{2k+1}+h^{\ell+1}+h^\ell R_h\bigr)\\
 &\lesssim\tau^4+h^{2k+1}+h^{2\ell+1}.
\end{align*}
The mass and consistency estimates, including their time differences,
complete the spatial-defect bound:
\[
 \mathcal D_N^{\mathrm{space}}
 \lesssim r_h^2+\tau^3R_h^2+\tau^2h^{2k+1}
 \lesssim r_h^2+\tau^4.
\]
The first and second time differences, exact derivatives, and projection
stabilization term give
\(\mathcal D_N^{\mathrm{bounded}}\lesssim1\). The two pairing estimates
then verify \eqref{eq:app_verification}.

As in the previous example, a sufficiently smooth solution therefore satisfies
the optimal-order spatial error bound
    \begin{equation*}
        \| \vbold(t) - \vbold_h(t)\|_{L^2} 
        +
        \| \sigmabold (t) - \sigmabold_{h} (t)\|_{L^2} 
      +
\| \heatvar (t) -  \heatvarh (t) \|_{L^2} 
     \lesssim
     h^{\ell+1/2} + h^{k+1/2}
    \end{equation*}
with constants independent of $h$,
and the full discretization error bound
\begin{equation*}
\| \vbold(t_n) - \vbold_h^n\|_{L^2} 
+
\| \sigmabold (t_n) - \sigmabold_{h}^n\|_{L^2} 
+
\| \heatvar (t_n) -  \heatvarhn{n} \|_{L^2} 
\lesssim
\tau^2 
+
h^{\ell+1/2} + h^{k+1/2},
\end{equation*}
with constants independent of $h$ and $\tau$,
where the last estimate holds under the CFL condition \eqref{eq:CFL}.
The natural choice for $\ell$ is equal to $k$. %

\subsection{Maxwell equation with dissipative 2D material}
\label{sec:app_MaxwADE}
Finally, for the example in Section~\ref{sec:ana_Maxwell} we apply a dG method with central flux (upwind could be included as well) to the Maxwell part $\wavevar$ and use its restriction to the interface as the ansatz space for $\heatvar$.
To be precise, we use the space of broken polynomials of degree $k$ for $\wavespaceh$, which yields again a broken polynomial space of degree $k$ on the (lower dimensional) restriction to the interface.
This leads to the standard
 dG Maxwell operators in $\sbilwaveh$, which can for example be found in \cite[Section~4.2]{HocS19},
 and thus Assumption~\ref{ass:discr_forms_skew_stab} is satisfied.
For the coupling form $\coupopformh$, we set
\begin{align*}
    \coupopfullformh( \varphi_h , y_h)
    =
    \ipheat( \varphi_h , \avg{ (c_\ell \otimes I_2)  y_{h,1,\parallel} }),
\end{align*}
 for $y_h \in \wavespaceh$ and $\varphi_h \in \heatspaceh$.
We note that the exact solution 
satisfies the generalized consistency condition of Assumption~\ref{ass:consistency},
see \cite[Lem.~3.10~(2)]{DoeDH26} for an analogous statement of external surface currents.

We now study the error terms in \eqref{eq:def_space_errors} appearing in Theorem~\ref{thm:space}, and choose the standard $L^2$-projections in all cases. 
This immediately implies that 
$\waveerrh(0)  =  \heaterrh(0) = 0$,
and similarly, as we work in the correct inner products also
$\Delta \ipwave (\pt \wavevar) = \Delta \ipheat (\pt \heatvar)  = 0 $. 
 In the same manner, if we compute the sources $f_h$ and $g_h$ via projections, we also have 
 $\Delta f  = \Delta g  = 0$.
As in the elastic cases, Assumption~\ref{ass:consistency}
yields
\begin{align*}
     |\Delta_{s,b} ( \wavevar, \heatvar, \waveerrh) |
     &=
    | \sbilwaveh( \projwaveRef \wavevar, \waveerrh)
    +
    \coupopfullformh( \projheatRef \heatvar, \waveerrh)
    -
    \sbilwaveext( \wavevar, \waveerrh)
    - \coupopfullformext(\heatvar , \waveerrh) |
    \\
    &=
    | \sbilwaveh( \projwaveRef \wavevar -\wavevar, \waveerrh)
    +
    \coupopfullformh( \projheatRef \heatvar - \heatvar, \waveerrh) |.
\end{align*}
The remaining term,
can be estimated using 
\cite[eq. (5.5)]{Hoc.S.2016}, as
\begin{align*}
  |\Delta_{s,b} ( \wavevar, \heatvar, \waveerrh) |
\lesssim
h^{k}
\bigl( 
| \wavevar |_{H^{k+1}}
+
| \heatvar |_{H^{k+1}}
\bigr)
\|\waveerrh \|_{\wavespaceh}.
\end{align*}

For the fully discrete estimate, the target is \(r_h=h^k\). With the
central flux, \(\mathcal P_N\) has the direct estimate: its spatial
consistency defect is at most \(h^k\) times the energy norm of the wave
error. Thus, for every \(\varepsilon>0\),
\[
 \mathcal P_N\leq\varepsilon\mathcal E_N+C_\varepsilon r_h^2.
\]
The central flux also gives \(\colstaboph=0\), so both stabilization
seminorms vanish and \(\Delta^s_{s,b}=\Delta_{s,b}\). Applying the same
consistency estimate to the first two time differences gives
\[
 \|\Delta^s_{s,b}(\dtau^a\wavevarn{j},\dtau^a\heatvarn{j})\|_{\wavespaceh^*}
 \lesssim h^k\quad(a=1,2),\qquad
 \normwaveOne{\dtau\wavevarhOneDotStar{j}}\lesssim1.
\]
The trace approximation estimate
for the projected bulk field gives
\[
 \|\widetilde\Delta_{s,b}(\tauavg{\wavevar}^j,
 \tauavg{\heatvar}^j)\|_{\heatspaceh^*}\lesssim h^{k+1/2}.
\]
Testing the same consistency defect against the reconstructed derivative
gives \(\mathcal Q_N\lesssim\tau^2h^k\lesssim\tau^4+r_h^2\).
The mass and source defects vanish by projection, while the interface
trace estimate and first time difference yield
\[
 \mathcal D_N^{\mathrm{space}}
 \lesssim h^{2k+1}+\tau^3h^{2k}
 \lesssim r_h^2+\tau^4.
\]
The first and second time differences and bounded exact derivative give
\(\mathcal D_N^{\mathrm{bounded}}\lesssim1\), so
\eqref{eq:app_verification} applies.

Finally, we assume that the solution is sufficiently regular, 
and note the best approximation results in
\cite[Section~18.4]{Ern.G.2021.FEAI}.
Then, we obtain for
the spatial discretization the estimate
\begin{equation*}
    \| E(t) - E_h(t)\|_{L^2} 
    +
    \| H(t) - H_h(t)\|_{L^2} 
    +
\| \heatvar(t) - \heatvarh(t)\|_{L^2} 
    \lesssim
    h^{k}
\end{equation*}
with constants independent of $h$,
and for the fully discrete scheme 
\begin{equation}\label{eq:MaxwellConvergenceResult}
\| E(t_n) -  E_h^n\|_{L^2} 
+
\| H(t_n) -  H_h^n\|_{L^2} 
+    
\| \heatvar(t_n) -  \heatvarh^n\|_{L^2} 
\lesssim
\tau^2 +       h^{k} ,
\end{equation}
with constants independent of $h$ and $\tau$,
where the last estimate holds under the CFL condition \eqref{eq:CFL}.

\section{Numerical experiments}
\label{sec:num_exp}

In this section, we present numerical experiments for three of our model problems. We begin with the thermo-elastic string from 
Section~\ref{sec:ana_thermoelastic_string}.
For it, we study convergence in space and time and examine the behavior of the CFL condition stated in Assumption~\ref{ass:CFL}.
    Our second example is concerned with the Thermo-elastic wave coupling described in Section~\ref{sec:ana_thermo_wave}, where we demonstrate the use of an upwind stabilized discontinuous Galerkin scheme in space for the elastic wave equation.
    The third example presents Maxwell's equation coupled to an ordinary differential equation, as described in Section~\ref{sec:ana_Maxwell}.
    For the latter two examples, we verify the space and time convergence, as described in Section~\ref{sec:app_thermo_elasto} and \ref{sec:app_MaxwADE} respectively.

Following the Guidelines for Safeguarding Good Research Practice of the Deutsche Forschungsgemeinschaft,\footnote{Deutsche Forschungsgemeinschaft, \textit{Guidelines for Safeguarding Good Research Practice. Code of Conduct}, \url{https://doi.org/10.5281/zenodo.3923601}.} we make the software underlying this section publicly available.
The experiments are implemented in deal.II \cite{2026:arndt.bangerth.ea:deal} and FEniCSx \cite{BarattaEtal2023}.
The source codes, version-pinned environments, and the parameter sets that reproduce the individual results reported below are available at 
\begin{center}
    \url{https://github.com/juliandoerner/lfcn_paper_code}.
\end{center}

\subsection{Thermo-elastic string}
\label{sec:app_waveheat:numerical_experiments}

We consider the example from Section~\ref{sec:ana_thermoelastic_string}.
We first verify second‑order convergence in time and the CFL condition in  Assumption~\ref{ass:CFL}. 
Therefore, let
\begin{equation} \label{eq:strong_ex_sol_time}
    \wavevarTwo(t,x) = e^{t} \cos{( \omega t)} x (1-x),
    \qquad \theta(t,x) = e^{t} \sin{(\eta t)} x (1-x),
\end{equation}
and choose \(f\) and \(g\) such that the \(\colvar = (\wavevarOne,\wavevarTwo,\heatvar)\) solves \eqref{eq:acous_wave_heat_1d}. 
In the following examples we set \(\omega = 3.37\cdot\pi\), \(\eta = 2.3\cdot\pi\) and the final time \(T = 1.0\).
Note that the exact solution is a second‑order polynomial in space and is therefore represented exactly by second‑order ansatz functions on any mesh. 
Consequently, we only measure the temporal integration error in the experiments below. 
For a sequence of eight mesh sizes ranging from \(0.5\) to \(0.005\), we compute the \(L^\infty\)-error in time and the \(L^2 \times H^1_0\times L^2\) error in space against the exact solution, using 40 different time‑step sizes between \(0.1\) and \(0.001\). 
The results are displayed in Figure~\ref{fig:wave_1d:tau_plot}. 
We conclude that our method exhibits second‑order convergence in time. 

\begin{figure}
    \begin{center}
        \includeexternalstandalonewithpath[]{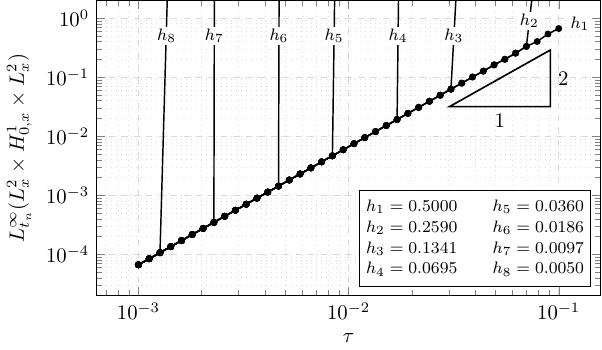}
    \end{center}
    \caption{
     \(L^\infty\)-error in time and the \(L^2 \times H^1_0\times L^2\)-error in space of the discrete solution against exact solution 
     \eqref{eq:strong_ex_sol_time}, plotted against the time-step size $\tau$.}
    \label{fig:wave_1d:tau_plot}
\end{figure}

In the next example, we determine the threshold time step \( \tau_{\text{CFL}} \) for which our method is \textit{stable}.
For a given target mesh size \( h_{\text{target}} \) we choose a sufficiently large interval \( \tau_{\text{CFL}} \in [\tau_{\text{low}}, \tau_{\text{high}}] \) such that the method is \textit{unstable} at \( \tau_{\text{high}} \) and \textit{stable} at \( \tau_{\text{low}} \) when compared with a reference calculation using a mesh size \( h_{\text{ref}} = 1.0 \).
Using the bisection method we halve the interval and repeat the comparison at each step until the interval width falls below \( 10^{-5} \).
The midpoint of the final interval is then taken as an approximation of \( \tau_{\text{CFL}} \).
Figure~\ref{fig:wave_1d:cfl_trace} shows the results of this procedure for 20 different target mesh sizes ranging from \( 0.1 \) to \( 0.001 \) and different polynomial degrees of ansatz functions.
From these data we conclude that the method is stable for all time-step sizes \( \tau < \tau_{\text{CFL}} \approx h\).

\begin{figure}
    \begin{center}
        \includeexternalstandalonewithpath[]{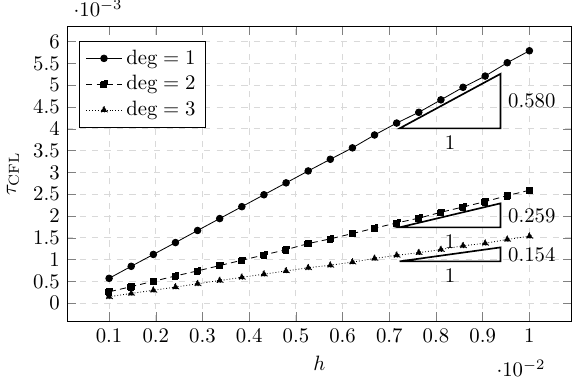}
    \end{center}
     \caption{
    Approximation of \( \tau_{\text{CFL}}\)
    plotted against the mesh size~$h$.
    }
    \label{fig:wave_1d:cfl_trace}
\end{figure}

The next example verifies the convergence order in space.
We define a different exact solution, i.e.
\begin{equation} \label{eq:strong_ex_sol_space}
    \wavevarTwo(t,x) = t \sin{( 3\pi  x)} e^{-20 (x-\frac{1}{2})^2} ,
    \qquad \theta(t,x) = t \sin{( 4\pi  x)} e^{-20 (x-\frac{1}{2})^2},
\end{equation}
and choose again \(\wavevarOne, f\) and \(g\) such that the collection is a solution of \eqref{eq:acous_wave_heat_1d}.
Note that the exact solution is polynomial in time and therefore integrated exactly by our scheme.
Hence, we only measure the space discretization error in the experiment below.
For a sequence of 40 different mesh sizes, we calculate the error against the exact solution with a time step size of \(0.001\).
    For the first sequence of meshes, we choose the polynomial degree of the heat and the wave ansatz space to be equal, i.e. \(k = \ell = p\) for \(p\in\monosetc{1,2,3,4}\).
    For the second sequence, we choose \(k = \ell+1 = p\) for \(p\in\monosetc{2,3,4}\).
Figure~\ref{fig:wave_1d:h_plot} shows the result of this experiment and confirms our findings from Section~\ref{sec:app_waveheat}.
    Especially, the calculations for degree \(k = \ell +1\) show that this choice is optimal.

\begin{figure}
    \begin{center}
        \includeexternalstandalonewithpath[]{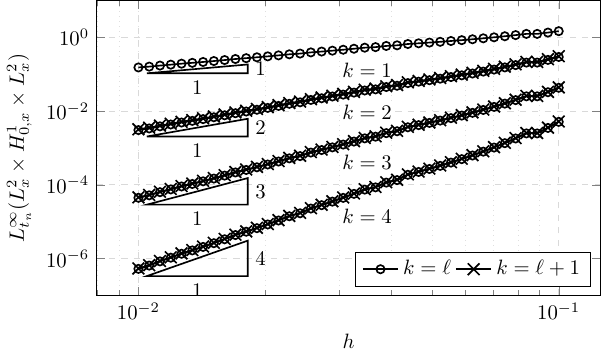}
    \end{center}
     \caption{
     \(L^\infty\)-error in time and the \(L^2 \times H^1_0\times L^2\)-error in space of the discrete solution against the exact solution 
     \eqref{eq:strong_ex_sol_space}, plotted against the mesh size $h$ for $\tau = 0.001$.}
    \label{fig:wave_1d:h_plot}
\end{figure}

\subsection{Thermo-elastic wave coupling}
\label{sec:app_thermo_elasto:numerical_experiment}

    We consider the thermo-elastic wave coupling \eqref{eq:thermo_elastic} from Section~\ref{sec:ana_thermo_wave}.
    On the two-dimensional domain \(\Omega = (0,1)^2\), we construct an exact solution, and show the spatial and temporal convergence rates as described in Section~\ref{sec:app_thermo_elasto}.
    
    Let \(\bm{V}\,:\,\Omega\to\mathbb{R}^2\) and \(\Theta\,:\, \Omega\to\mathbb{R}\) denote two spatial profiles that satisfy homogeneous boundary conditions on \(\partial\Omega\).
    For three temporal profiles \(a,A,c\,:\,\mathbb{R}_+\to\mathbb{R}\) with \(a(0) = 1\) and \( A(0) = c(0) = 0\), we make the ansatz
    \begin{equation*}
        \vbold(t,\bm{x})=a(t)\bm{V}(\bm{x}),
        \qquad 
        \sigmabold(t,\bm{x}) = A(t)C\epsilon(\bm{V}(\bm{x})),
        \qquad
        \theta(t,\bm{x}) = c(t)\Theta(\bm{x}),
    \end{equation*}
    where \eqref{eq:thermo_elastic:evo_sigma} demands the relation \(A'(t) = a(t)\).
    The remaining equations \eqref{eq:thermo_elastic:evo_v} and \eqref{eq:thermo_elastic:evo_theta} are satisfied by choosing the data as
    \begin{align}
       \bm{f}(t,\bm{x}) &= 
        a'(t)\bm{V}(\bm{x})
               -\rho^{-1}A(t)\div\big(C\epsilon(\bm{V}(\bm{x}))\big)
               -\alpha\rho^{-1}c(t)\nabla\Theta(\bm{x}),\\
       h(t,\bm{x}) &= c'(t)\Theta(\bm{x})-c(t)\kappa\Delta\Theta(\bm{x})-a(t)\alpha\div\bm{V}.
   \end{align}
   In all experiments we choose the material coefficients as
   \begin{equation*}
        \rho = 1.1,
        \qquad
        \mu = 2.32,
        \qquad
        \lambda = 1.42,
        \qquad 
        \kappa = 4.2,
        \qquad
        \alpha = 1,
   \end{equation*}
   and calculate until end time \(T = 1\).

   First, we verify second-order convergence in time.
   For that, we choose a polynomial profile \(P(\bm{x}) = x_1(1-x_1)x_2(1-x_2)\) in space and define
   \begin{equation}\label{eq:ExactSolutionThermoWavePoly}
        \bm{V}(\bm{x}) = (P(\bm{x}),P(\bm{x})), 
        \quad
        \Theta(\bm{x}) = P(\bm{x}),
        \quad
        \quad a(t) = \cos(\omega t),
        \quad c(t) = \sin(\eta t),
   \end{equation}
   with temporal frequencies \(\omega,\eta > 0\).
   Note that we use second-order ansatz functions to resolve the spatial profile exactly.
   Hence, only the temporal error is measured in the following.
   For a sequence of 10 different meshes with target mesh sizes ranging between \(0.2\) and  \(0.02\), we compute the \(L^\infty\)-error in time and the \(L^2(\Omega)^2\times L^2(\Omega)^{2\times 2}\times L^2(\Omega)\)-error in space of our scheme against the manufactured solution.
   Figure~\ref{fig:wave_thermo_elast:tau_plot} verifies second order in time if the CFL condition \(\tau\lesssim h\) is satisfied.
   \begin{figure}
       \begin{center}
           \includeexternalstandalonewithpath[]{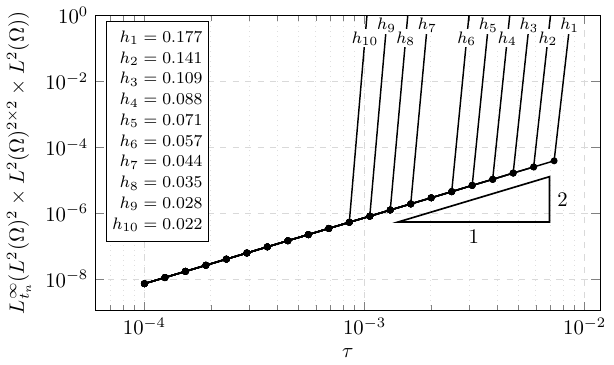}
       \end{center}
       \caption{
           \(L^\infty\)-error in time and the \(L^2(\Omega)^2\times L^2(\Omega)^{2\times 2}\times L^2(\Omega)\)-error of the discrete solution against the exact solution 
        \eqref{eq:ExactSolutionThermoWavePoly}, plotted against the time step-size \(\tau\).
       }
       \label{fig:wave_thermo_elast:tau_plot}
   \end{figure}
   
   Second, we verify the spatial convergence rate.
   Therefore, we choose a cavity-like profile \(P(\bm{x}) = \sin(2\pi x_1)\sin(2\pi x_2)\) and define
    \begin{equation}\label{eq:ExactSolutionThermoWaveCavity}
        \bm{V}(\bm{x}) = (P(\bm{x}),P(\bm{x})), 
        \quad
        \Theta(\bm{x}) = P(\bm{x}),
        \quad
        \quad a(t) = 1,
        \quad c(t) = t.
   \end{equation}
   Note that our second order in time scheme resolves the temporal differential equation exactly since all third time derivatives of the solution vanish.
   We perform two different calculations on a sequence of 15 meshes with mesh sizes ranging between \(0.2\) and \(0.02\) each.
   The same error as in the temporal case is calculated.
   For the first set of calculations, we choose the polynomial degree \(k\) of the wave and \(\ell\) of the  heat ansatz spaces equal, i.e. \(k= \ell\).
   The result is shown in Figure~\ref{fig:wave_thermo_elast:h_plot} with circle markers.
   For \(p\in \monosetc{1,2,3}\) we obtain in the case \(k=\ell= p\) spatial convergence  of order \(h^{p+1}\).
   This does not agree with our result in Section~\ref{sec:app_thermo_elasto}, where we show convergence of order \(h^{p+1/2}\).
   This, however, is a well known phenomena in the case of upwind-flux stabilized dG schemes.
   For that, compare \cite[Sec.~4.5]{Hes.War.2008} and reference therein.
   For the second set of calculations, we choose different polynomial degrees for the wave and heat part, namely \(k = \ell + 1 = p\) for \(p\in \monosetc{2,3}\).
   Our theory predicts order \(h^{p-1/2}\).
   Figure~\ref{fig:wave_thermo_elast:h_plot} depicts the result with cross markers.
   After the pre-asymptotic regime for coarser mesh sizes, ansatz functions with degree \(p\) give convergence of order \(h^{p}\).
    Up to the half-order gap due to upwind flux stabilization,
    the suboptimal choice of \(\ell = p-1\) leads to an order reduction, as suggested by our estimates in Section~\ref{sec:app_thermo_elasto}.

\begin{figure}
    \begin{center}
        \includeexternalstandalonewithpath[]{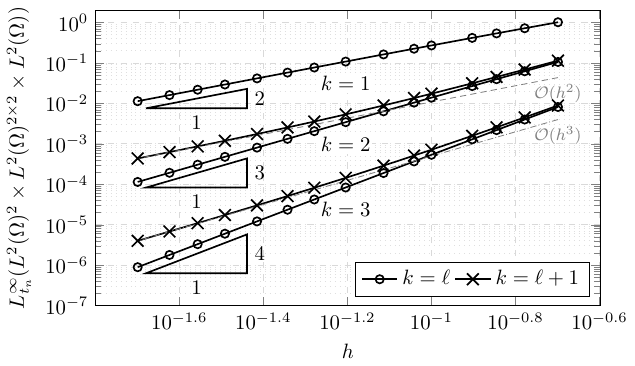}
    \end{center}
     \caption{
        \(L^\infty\)-error in time and the \(L^2(\Omega)^2\times L^2(\Omega)^{2\times 2}\times L^2(\Omega)\)-error of the discrete solution against the exact solution 
        \eqref{eq:ExactSolutionThermoWaveCavity}, plotted against the mesh-size \(h\).
     }
    \label{fig:wave_thermo_elast:h_plot}
\end{figure}

\subsection{Maxwell equation with dissipative 2D material.}
\label{sec:app_MaxwADE:numerical_experiment}

In order to reduce the degrees of freedom, we perform all calculations in transverse electric (TE) mode.
Hence, let \(\Omega = (-1,1)\times (0,1) \subset \mathbb{R}^2\), partitioned by an interface \(\Fint\) such that
\begin{equation*}
    \Omega_1 = (-1,0) \times (0,1),
    \quad
    \Omega_2 = (0,1) \times (0,1),
    \quad
    \Fint= \{0\}\times (0,1).
\end{equation*}
We set the components \(E_3 = H_1 = H_2 = 0\) and the evolution of the Maxwell equation in \eqref{eq:Maxwell_interface} reduce to
\begin{subequations}
    \begin{align}
        \partial_t E_{1,i} &=
        \partial_2 H_{3,i} - J_{1,i},
        &&[0,T] \times \Omega_{i},\label{eq:TEMaxwell:EOne}\\
        \partial_t E_{2,i} &=
        -\partial_1 H_{3,i} - J_{2,i},
        &&[0,T] \times \Omega_{i},\\
        \partial_t H_{3,i} &=
          -\partial_1 E_{2,i} + \partial_2 E_{1,i},
          &&[0,T] \times \Omega_{i},\label{eq:TEMaxwell:HThree}
    \end{align}
\end{subequations}
subject to initial conditions and perfectly conducting boundary conditions.
As an auxiliary differential equation we choose a Lorentz oscillator
\begin{equation}\label{eq:LorentzOsc}
    \partial_t^2 J_{\Fint}
    + \gamma \partial_t J_{\Fint}
    + \omega^2 J_{\Fint}
    = \partial_t E_{2,\parallel} + \partial_t f,
    \qquad  [0,T] \times \Fint,
\end{equation} 
with frequency \(\omega > 0\), damping \(\gamma >0\) and forcing function \(f\).
The coupling is described by
\begin{equation*}
    \jmp{H_3}_{\Fint} = -J_\Fint,\qquad [0,T]\times \Fint.
\end{equation*}
We rewrite the second-order differential equation \eqref{eq:LorentzOsc} by introducing a new variable
\begin{equation*}
    K_\Fint(t,\cdot) = \omega \int_0^t J_\Fint(s,\cdot)\mathrm{d}s,
    \quad \partial K_\Fint(t,\cdot) = \omega J_\Fint(t,\cdot),
    \quad K_\Fint(0,\cdot) = 0, 
\end{equation*}
and obtain for \(\theta = (K_\Fint,J_\Fint)\) the first-order system
\begin{equation*}
    \partial_t \theta + L\theta = c E_{2,\parallel} + g,
    \quad
    L = 
    \begin{pmatrix}
        0 && -\omega\\
        \omega && \gamma
    \end{pmatrix},
    \quad
    c = 
    \begin{pmatrix}
        0\\1
    \end{pmatrix},
    \quad
    g = 
    \begin{pmatrix}
        0\\ f
    \end{pmatrix}.
\end{equation*}
Note that the single components of the auxiliary differential equation in TE mode are scalar valued.
Therefore, the kronecker products are omitted.

For our first example, 
we construct a Cavity like solution alongside a discontinuous profile, defined as
\begin{equation*}
    h_1(s) = \cos\bigl( \beta(s-1) \bigr),
    \qquad h_2(s) = -\cos\bigl(\beta(s+1)\bigr),
\end{equation*}
with a parameter \(\beta > 0\) such that \(\cos(\beta) \neq 0\).
Our ansatz on \(\Omega_i\) now reads
\begin{subequations}\label{eq:ExactSolutionMaxwellCavity}
\begin{align}
    E_{1,i}(t,x_1,x_2) &= -k\nu^{-1}\sin(\nu t)\sin(k x_2)h_i(x_1),\\
    E_{2,i}(t,x_1,x_2) &= -\nu^{-1}\sin(\nu t)\cos(k x_2)h_i'(x_1),\\
    H_{3,i}(t,x_1,x_2)     &= \cos(\nu t)\cos(k x_2)h_i(x_1),
\end{align}
where \(\nu = \sqrt{\beta^2 + k^2}\) with wave number \(k =n\pi\), \(n\in \mathbb{N}\).
The ansatz satisfies \eqref{eq:TEMaxwell:EOne}-\eqref{eq:TEMaxwell:HThree}
 with homogeneous data \(J_1 = J_2  = 0\).
As a surface current, we obtain
\begin{equation}
    J_\Fint(t,x_1,x_2) = -\jmp{H_{3}(t,x_1,x_2)}_\Fint = - 2\cos(\beta) \cos(\nu t) \cos(kx_2),
\end{equation}
\end{subequations}
and choose the forcing function \(f\) such that \eqref{eq:LorentzOsc} is satisfied.
In order to verify the spatial convergence rate in \eqref{eq:MaxwellConvergenceResult}, we choose  \(\omega = 0.5\), \(\gamma = 1\), \(\beta = 3\) and \(n = 10\).
For a series of 20 target mesh sizes ranging between \(0.05\) and \(0.005\) we compute simulations over the temporal interval \([0,1]\) with constant time step size \(\tau = 0.0001\).
We compute the \(L^\infty\)-error in time and the \(L^2(\Omega)^2 \times L^2(\Omega) \times L^2(\Fint)^2\)-error in space against the exact solution.
The results are displayed in Figure~\ref{fig:maxwell:h_plot} for three different polynomial degrees.
We conclude that our method exhibits the correct spatial convergence rate.
\begin{figure}
    \begin{center}
        \includeexternalstandalonewithpath[]{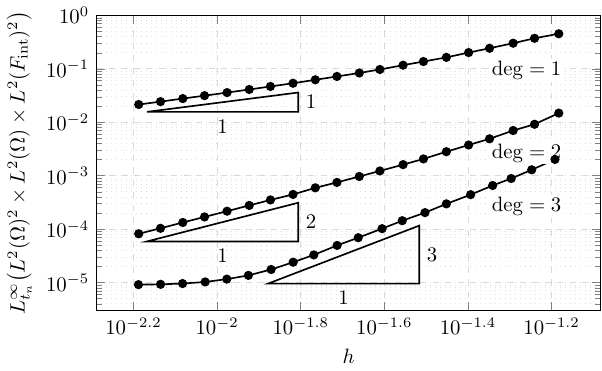}
    \end{center}
    \caption{
        \(L^\infty\)-error in time and the \(L^2(\Omega)^2 \times L^2(\Omega) \times L^2(\Fint)^2\)-error of the discrete solution against the exact solution 
        \eqref{eq:ExactSolutionMaxwellCavity}, plotted against the mesh-size \(h\).
     }
    \label{fig:maxwell:h_plot}
\end{figure}

For our second example, we construct an exact solution that is polynomial in space.
This allows to measure only the temporal error.
We choose the discontinuous profile
\(
    r_1(s) = 1-s
\) and 
\(
    r_2(s) = 1+s
\),
and make on \(\Omega_i\) the ansatz
\begin{subequations}\label{eq:ExactSolutionMaxwellPoly}
    \begin{align}
        E_{1,i}(t,x_1,x_2) &= -\nu\sin(\nu t)x_2(1-x_2)(1-x_1^2),\\
        E_{2,i}(t,x_1,x_2) &= -\nu\sin(\nu t)r_i(x_1),\\
        H_{3,i}(t,x_1,x_2)     &= \cos(\nu t)\bigl((-1)^{i} + (1-2x_2)(1-x_1^2)\bigr),
    \end{align}
\end{subequations}
with temporal frequency \(\nu > 0\).
The volume currents \(J_{1,i}\) and \(J_{2,i}\) are chosen such that \eqref{eq:TEMaxwell:EOne}-\eqref{eq:TEMaxwell:HThree}
 are satisfied.
Furthermore, the surface current is then given by \(J_{\Fint} = -2\cos(\nu t)\), and the forcing \(f\) is chosen such that \eqref{eq:LorentzOsc} is satisfied.
In our example, we choose polynomial degree two for the finite element space and fix the variables \(\gamma = 1\), \(\omega = 10\), and \(\nu = 24\). 
For a series of ten target mesh-sizes between $0.1$ and $0.01$, and 50 times step sizes between $0.1$ and $0.0001$, we calculate the same error as in the example before.
Figure~\ref{fig:maxwell:tau_plot} shows the result and confirms second-order convergence in time.
Furthermore, the plot shows that a finer mesh size also causes a more severe CFL condition, where the relation is linear.

\begin{figure}
    \begin{center}
        \includeexternalstandalonewithpath[]{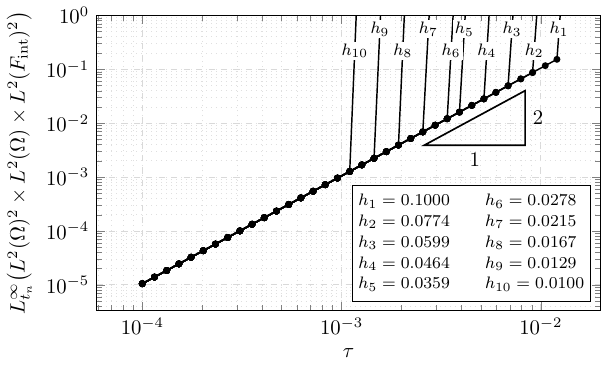}
    \end{center}
    \caption{
        \(L^\infty\)-error in time and the \(L^2(\Omega)^2 \times L^2(\Omega) \times L^2(\Fint)^2\)-error of the discrete solution against the exact solution 
        \eqref{eq:ExactSolutionMaxwellPoly}, plotted against the time step-size \(\tau\).
     }
    \label{fig:maxwell:tau_plot}
\end{figure}

\section*{Acknowledgements}
The authors acknowledge funding by the Deutsche Forschungsgemeinschaft (DFG, German Research Foundation) -- Project-ID 258734477 -- SFB 1173.
JD thanks the French Institute for Research in Computer Science and Automation (Inria) in Lille for the kind hospitality during his research stay. This research stay was financially supported by the Karlsruhe House of Young Scientists (KHYS).

\newpage
\bibliographystyle{amsalpha-doi} %
\bibliography{literature}

\appendix
\section{Computations for the examples}
\label{sec:app_1}

\subsection{Elastic example }\label{appendix:Elastic}

In Section~\ref{app:ana_ell_wave} and \ref{sec:app_thermo_elasto} we suggest a discontinuous Galerkin discretization of the forms
\begin{align*}
    &\ipwave\,:\,  \wavespace\times \wavespace \to \R,\qquad
    &&\ipwave(x,y) 
        = \langle \rho \vbold, \wbold \rangle_{L^2}
       + \langle C^{-1} \sigmabold, \lambdabold \rangle_{L^2},\\
    &\sbilwave\,:\, \wavespaceV\times \wavespaceV \to \R,\qquad 
    &&s(x,y) 
        = -\ip{\div \sigmabold, \wbold}_{L^2} - \ip{\epsilon(\vbold), \lambdabold}_{L^2},
\end{align*}
where \(\wavevar = (\vbold,\sigmabold)\) and \(\wavetestvar = (\wbold,\lambdabold)\) with  \(\wavevar, \wavetestvar\in Y = H^1_0(\Omega,\R^{d}) \times H(\div, \Omega,\R^{d\times d}_{\textup{sym}})\) and \(\epsilon(\vbold) = \Sym(D\vbold)\) denotes the symmetric gradient.
Our presentation follows \cite{Ziegler2020,HocPSTW15}.
Starting from their local discretization, we derive global dG forms and prove the necessary auxiliary result for the analysis of our examples.

We restrict ourselves in the following to dimension \(d=2\) and assume that the elasticity tensor describes an isotropic medium, i.e.
\begin{equation*}
    C w= C(\mu,\lambda) w = 2 \mu w + \lambda \operatorname{tr}(w)I,
\end{equation*}
with Lam\'e parameters \(\lambda \geq 0\), \(\mu > 0\).

Let \(\dGMesh\) be a matching triangulation of \(\Omega\) and denote with \(Y_h\) the space of broken polynomials of degree \(k > 0\) on this mesh.
All problem dependent constants are assumed to be constant on every \(K\in\dGMesh\).
In the following, we introduce a local and a global naming convention of mesh faces and trace-quantities of functions on them.

For any element \(K\in\dGMesh\), we define the set of faces \(F\) associated to \(K\) as \(\dGFaceK\).
For any face \(F\in \dGFaceK\), there is an associated outer unit normal vector \(\dGnKF\) and a unit tangential vector, chosen with respect to the right-hand rule, and denoted with \(\dGtKF\).
The element opposite of \(K\) on the face \(F\), if existent, is denoted with \(K_F\).
Local jumps and averages are defined as
\begin{equation*}
    \dGJmp{f}_{K,F} = 
        \bigrestr{(\restr{f}{K_F})}{F} - \bigrestr{(\restr{f}{K})}{F},
    \qquad
    \dGAvg{f}^\alpha_{K,F}
        = \alpha^{K_F} \bigrestr{(\restr{f}{K_F})}{F}
         + \alpha^{K} \bigrestr{(\restr{f}{K})}{F},
\end{equation*}
where \(\alpha^{K_F},\alpha^K> 0\) denote positive weights such that \(\alpha_{K,F} + \alpha_K = 1\).
The definition is understood element-wise for vector- and tensor-valued functions.

The set of all mesh faces is denoted as \(\dGFace\).
The latter set is further divided in the set \(\dGFaceInt\) of interior faces and \(\dGFaceBnd\) of boundary faces.
For any face \(F\in\dGFaceInt\), we fix a global numbering scheme, i.e., we choose an ordered tuple \((K_1,K_2) \in \dGMesh\times\dGMesh\) such that \(F\in \dGFaceKOne\cap\dGFaceKTwo\).
Based on this numbering, we define the global unit normal and tangential vectors as
\begin{equation}\label{eq:GloablNormalTangential}
    \dGnF = n_{K_1,F} = -n_{K_2,F},
    \qquad
    \dGtF = \tau_{K_1,F} = -\tau_{K_2,F}.
\end{equation}
Similarly, we define global jump and average as
\begin{equation}\label{eq:GlobalJmpAvg}
    \dGJmp{f}_F =
        \dGJmp{f}_{K_1,F} 
            = - \dGJmp{f}_{K_2,F},
    \qquad
    \dGAvg{f}_F^\alpha = 
        \dGAvg{f}_{K_1,F}^\alpha
            = \dGAvg{f}_{K_2,F}^\alpha.
\end{equation}
See Figure~\ref{fig:FaceNamingConvention} for our naming convention.

\begin{figure}
    \begin{tikzpicture}[x=0.9cm,y=0.8cm]
    \def\rightShift{0.1}
\draw[color=black] 
  (5,5) -- (0,0) -- (5,0) -- cycle;
\draw[color=black]
  (5 + \rightShift, 5) -- (5 + \rightShift,0) -- (10 + \rightShift,0) -- cycle;
\node[black, xshift=-7, yshift=-16] at (5,5) {$F$};
\node[black, xshift=28, yshift=12] at (0,0) {$K_1$};
\node[black, xshift=-28, yshift=12] at (10,0) {$K_2$};

\draw[line width=1, -{Latex[width=6pt, length=3pt]}]
    (5, 2) -- (6, 2);
\node[anchor=north west] at (5.35, 1.85) {$n_F = n_{K_1,F}$};
\draw[line width=1, -{Latex[width=6pt, length=3pt]}]
    (5, 2) -- (5, 3.1);
\node[anchor=east] at (4.9, 2.55) {$\tau_F = \tau_{K_1,F}$};
\draw[line width=1, -{Latex[width=6pt, length=3pt]}]
    (5 + \rightShift, 0.9) -- (4 + \rightShift, 0.9);
\node[anchor=south east] at (4.45, 0.95) {$-n_F = n_{K_2,F}$};
\draw[line width=1, -{Latex[width=6pt, length=3pt]}]
    (5 + \rightShift, 0.9) -- (5 + \rightShift, 0.2);
\node[anchor=west] at (5.35, 0.55) {$-\tau_F = \tau_{K_2,F}$};
\end{tikzpicture}
    \caption{Face naming convention with normal and tangential vectors.}
    \label{fig:FaceNamingConvention}
\end{figure}

Following the work in \cite{HocPSTW15,Ziegler2020}, we obtain the local definition of the operator \(\waveoph = \waveophskew + \waveophpos\) on every \(K\in \dGMesh\), for \(\wavevarh = (\vboldh, \sigmaboldh)\) and \(\wavetestvarh = (\wboldh,\lambdaboldh)\) with \(\wavevarh,\wavetestvarh\in Y_h\)
\begin{equation}\label{eq:SkewFormLocal}
    \begin{aligned}
        \ip{\waveophskew \wavevarh, &\wavetestvarh}_{\LTwoK}\\
            &= -\ip{\div \sigmaboldh, \wboldh}_{\LTwoK}
                -\ip{ \epsilon(\vboldh), \lambdaboldh}_{\LTwoK}\\
            &\qquad-\sum_{F\in \dGFaceK}
                \begin{aligned}[t]
                    &\alpha_2^{K,F} \ip{\dGnKF \cdot (\dGJmp{\sigmaboldh}_{K,F}\dGnKF),
                        \dGnKF \cdot \wboldh}_{\LTwoF}\\
                    &\quad+\alpha_3^{K,F} \ip{\dGnKF \cdot \dGJmp{\vboldh}_{K,F},
                        \dGnKF \cdot (\lambdaboldh \dGnKF)}_{\LTwoF}
                \end{aligned}\\
            &\qquad-\sum_{F\in \dGFaceK}
                \begin{aligned}[t]
                    &\alpha_6^{K,F} \ip{\dGtKF \cdot (\dGJmp{\sigmaboldh}_{K,F}\dGnKF),
                        \dGtKF \cdot \wboldh}_{\LTwoF}\\
                    &\quad+\alpha_7^{K,F} \ip{\dGtKF \cdot \dGJmp{\vboldh}_{K,F},
                        \dGtKF \cdot (\lambdaboldh \dGnKF)}_{\LTwoF},
                \end{aligned}
    \end{aligned}
\end{equation}
\begin{equation}\label{eq:PosFormLocal}
    \begin{aligned}
        \ip{\waveophpos \wavevarh, &\wavetestvarh}_{\LTwoK}\\
            &=-\sum_{F\in \dGFaceK}
                \begin{aligned}[t]
                    &\alpha_1^{F} \ip{\dGnKF \cdot (\dGJmp{\sigmaboldh}_{K,F}\dGnKF),
                        \dGnKF \cdot (\lambdaboldh \dGnKF)}_{\LTwoF}\\
                    &\quad+\alpha_4^{F} \ip{\dGnKF \cdot \dGJmp{\vboldh}_{K,F},
                        \dGnKF \cdot \wboldh}_{\LTwoF}
                \end{aligned}\\
            &\qquad-\sum_{F\in \dGFaceK}
                \begin{aligned}[t]
                    &\alpha_5^{F} \ip{\dGtKF \cdot (\dGJmp{\sigmaboldh}_{K,F}\dGnKF),
                        \dGtKF \cdot (\lambdaboldh \dGnKF)}_{\LTwoF}\\
                    &\quad+\alpha_8^{F} \ip{\dGtKF \cdot \dGJmp{\vboldh}_{K,F},
                        \dGtKF \cdot \wboldh}_{\LTwoF},
                \end{aligned}
    \end{aligned}
\end{equation}
with constants
\begin{align*}
    \alpha_1^{F} &=
        \frac{1}{\rho_K c_{P,K} + \rho_{K_F} c_{P,K_F}},
    &&\alpha_2^{K,F} =
        \frac{\rho_{K}c_{P,K}}{\rho_{K}c_{P,K} + \rho_{K_F}c_{P,K_F}},\\
    \alpha_3^{K,F} &=
        \frac{\rho_{K_F} c_{P,K_F}}{\rho_K c_{P,K} + \rho_{K_F} c_{P,K_F}},
    &&\alpha_4^{F} =
        \frac{\rho_{K}c_{P,K}\rho_{K_F}c_{P,K_F}}{\rho_{K}c_{P,K} + \rho_{K_F}c_{P,K_F}},\\
    \alpha_5^{F} &=
        \frac{1}{\rho_K c_{S,K} + \rho_{K_F} c_{S,K_F}},
    &&\alpha_6^{K,F} =
        \frac{\rho_{K}c_{S,K}}{\rho_{K}c_{S,K} + \rho_{K_F}c_{S,K_F}},\\
    \alpha_7^{K,F} &=
        \frac{\rho_{K_F} c_{S,K_F}}{\rho_K c_{S,K} + \rho_{K_F} c_{S,K_F}},
    &&\alpha_8^{F} =
        \frac{\rho_{K}c_{S,K}\rho_{K_F}c_{S,K_F}}{\rho_{K}c_{S,K} + \rho_{K_F}c_{S,K_F}},
\end{align*}
where \(c_P\) and \(c_S\) denote the velocity of shear and pressure waves, defined as
\begin{equation*}
    c_P = \sqrt{\frac{2\mu / 3 +\lambda}{\rho}},
    \qquad
    c_S = \sqrt{\frac{\mu}{\rho}}.
\end{equation*} 
On a boundary face \(F\in \dGFaceBnd\) we define \(\dGJmp{\vboldh}_{K,F} = -2\bigrestr{\wavevarhnOne{}}{F}\), \(\dGJmp{\sigmaboldh}_{K,F} = 0\),
\(\rho_{K_F} = \rho_{K}\), \(c_{S,K_F} = c_{S,K}\) and \(c_{P,K_F} = c_{P,K}\).

Summing in \eqref{eq:SkewFormLocal} and \eqref{eq:PosFormLocal} over all elements \(K\in\dGMesh\) while using \eqref{eq:GloablNormalTangential}, \eqref{eq:GlobalJmpAvg} yields the global operators
\begin{equation}\label{eq:SkewFormGlobal} 
    \begin{aligned}
        \ip{\waveophskew \wavevarh, &\wavetestvarh}_{L^2(\Omega)}\\
            &= -\sum_{K\in\dGMesh} \ip{\div \sigmaboldh, \wboldh}_{\LTwoK}
                +\ip{ \epsilon(\vboldh), \lambdaboldh}_{\LTwoK}\\
            &\qquad-\sum_{F\in \dGFaceInt}
                \begin{aligned}[t]
                    &\ip{\dGnF \cdot (\dGJmp{\sigmaboldh}_{F}\dGnF),
                        \dGnF \cdot \dGAvg{\wboldh}_F^{\alpha_2}}_{\LTwoF}\\
                    &\quad+\ip{\dGnF \cdot \dGJmp{\vboldh}_{F},
                        \dGnF \cdot (\dGAvg{\lambdaboldh }_F^{\alpha_3}\dGnF)}_{\LTwoF}\\
                    &\quad+\ip{\dGtF \cdot (\dGJmp{\sigmaboldh}_{F}\dGnF),
                        \dGtF \cdot \dGAvg{\wboldh}_F^{\alpha_6}}_{\LTwoF}\\
                    &\quad+\ip{\dGtF \cdot \dGJmp{\vboldh}_{F},
                        \dGtF \cdot (\dGAvg{\lambdaboldh}_F^{\alpha_7} \dGnF)}_{\LTwoF},
                \end{aligned}\\
            &\qquad+\sum_{F\in \dGFaceBnd}
                \begin{aligned}[t]
                    &\ip{\dGnF \cdot \vboldh, \dGnF \cdot (\lambdaboldh\dGnF)}_{\LTwoF}\\
                    &\quad + \ip{\dGtF \cdot \vboldh, \dGtF \cdot (\lambdaboldh \dGnF)}_{\LTwoF},
                \end{aligned}
    \end{aligned}
\end{equation}
\begin{equation}\label{eq:PosFormGlobal}
    \begin{aligned}
        \ip{\waveophpos \wavevarh, &\wavetestvarh}_{L^2(\Omega)}\\
            &=\sum_{F\in \dGFaceInt}
                \begin{aligned}[t]
                    &\alpha_1^{F} \ip{\dGnF \cdot (\dGJmp{\sigmaboldh}_{F}\dGnF),
                        \dGnF \cdot (\dGJmp{\lambdaboldh}_F \dGnF)}_{\LTwoF}\\
                    &\quad+\alpha_4^{F} \ip{\dGnF \cdot \dGJmp{\vboldh}_{F},
                        \dGnF \cdot \dGJmp{\wboldh}_F}_{\LTwoF}\\
                    &\quad+\alpha_5^{F} \ip{\dGtF \cdot (\dGJmp{\sigmaboldh}_{F}\dGnF),
                        \dGtF \cdot (\dGJmp{\lambdaboldh}_F \dGnF)}_{\LTwoF}\\
                    &\quad+\alpha_8^{F} \ip{\dGtF \cdot \dGJmp{\vboldh}_{F},
                        \dGtF \cdot \dGJmp{\wboldh}_F}_{\LTwoF},
                \end{aligned}\\
            &\qquad+\sum_{F\in \dGFaceBnd}
                \begin{aligned}[t]
                    &\rho_F c_{P,F} \ip{\dGnF \cdot \vboldh, \dGnF \cdot \wboldh}_{\LTwoF}\\
                    &+\quad \rho_F c_{S,F} \ip{\dGtF \cdot \vboldh, \dGtF \cdot \wboldh}_{\LTwoF}.
                \end{aligned}
    \end{aligned}
\end{equation}

\begin{lemma} \label{lem:proof_ass_discr_forms_skew_stab}
    The operators \(\waveophskew\) and \(\waveophpos\), defined in \eqref{eq:SkewFormGlobal} and \eqref{eq:PosFormGlobal} respectively, satisfy Assumption~\ref{ass:discr_forms_skew_stab}.
\end{lemma}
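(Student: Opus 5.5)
We have to verify the three properties of Assumption~\ref{ass:discr_forms_skew_stab}: skew-symmetry $\ip{\waveophskew y_h, y_h}=0$, symmetry and nonnegativity of $\sbilwavehpos$, and that $\sbilwavehpos(y,y_h)=0$ for $y\in\wavespaceV$. We work directly with the global representations \eqref{eq:SkewFormGlobal} and \eqref{eq:PosFormGlobal}, taking the summation from the local forms \eqref{eq:SkewFormLocal}, \eqref{eq:PosFormLocal} as given.

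\emph{Positive part.} In \eqref{eq:PosFormGlobal} every interior-face term has the form $\alpha^F\ip{a(x_h),a(y_h)}_{\LTwoF}$, where $a$ is a linear functional of the jump (normal--normal or tangential--normal part of $\dGJmp{\sigmaboldh}_F\dGnF$, or normal or tangential part of $\dGJmp{\vboldh}_F$). Every boundary term has the form $\rho_Fc_F\ip{b(x_h),b(y_h)}_{\LTwoF}$ with $b$ the normal or tangential trace of the velocity. Hence the form is symmetric. Since all coefficients $\alpha_1^F,\alpha_4^F,\alpha_5^F,\alpha_8^F,\rho_Fc_{P,F},\rho_Fc_{S,F}$ are positive, $\sbilwavehpos(y_h,y_h)$ is a sum of weighted squares and therefore nonnegative. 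For the consistency in part (b), take $y=(\vbold,\sigmabold)\in H^1_0\times H(\div)$. Then $\dGJmp{\vbold}_F=0$ on interior faces, and $\vbold=0$ on boundary faces. The normal traces $\sigmabold\dGnF$ are continuous across interior faces, so $\dGJmp{\sigmabold}_F\dGnF=0$. Every term therefore vanishes. To make this rigorous, we interpret the traces in the sense that $\sigmabold\dGnF\in H^{-1/2}$ is single-valued, and use the extension of the forms to $\wavespaceV+\wavespaceVh$.

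\emph{Skew part.} This is the main computation. For $y_h=x_h$, we integrate by parts elementwise on $-\ip{\epsilon(\vboldh),\sigmaboldh}_{\LTwoK}$ to get $\ip{\vboldh,\div\sigmaboldh}_{\LTwoK}-\ip{\vboldh,\sigmaboldh n_{K,F}}_{\LTwoF}$. The volume terms then cancel against $-\ip{\div\sigmaboldh,\vboldh}$. It remains to show that the element boundary terms $-\sum_K\sum_F\ip{\vboldh|_K,\sigmaboldh|_K n_{K,F}}$ cancel the four flux terms plus the boundary terms in \eqref{eq:SkewFormGlobal}.

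On an interior face we split $\vboldh$ and $\sigmaboldh\dGnF$ into normal and tangential components. The two-sided sum equals $-\ip{\dGJmp{\vboldh\cdot(\sigmaboldh\dGnF)}_F}$ up to sign conventions. We then use the product rule $\dGJmp{ab}=\dGJmp{a}\dGAvg{b}^{\beta}+\dGAvg{a}^{\gamma}\dGJmp{b}$, valid exactly when the weights are complementary: $\beta^{K_1}=\gamma^{K_2}$ and $\beta^{K_2}=\gamma^{K_1}$. This is precisely the relation between $\alpha_2$ and $\alpha_3$, and likewise between $\alpha_6$ and $\alpha_7$, since $\alpha_2^{K,F}=\alpha_3^{K_F,F}$. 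The normal and tangential pieces are thus cancelled by the $(\alpha_2,\alpha_3)$ and $(\alpha_6,\alpha_7)$ flux terms respectively.

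On boundary faces we have $\dGJmp{\sigmaboldh}=0$ and $\dGJmp{\vboldh}=-2\vboldh$. The leftover $-\ip{\vboldh,\sigmaboldh\dGnF}$ is exactly compensated by the boundary sum in \eqref{eq:SkewFormGlobal}, using $\vboldh\cdot(\sigmaboldh n)=(n\cdot\vboldh)(n\cdot\sigmaboldh n)+(\tau\cdot\vboldh)(\tau\cdot\sigmaboldh n)$ in $d=2$.

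The main obstacle is this bookkeeping: the signs of the local jumps, the orientation of $\dGtF$, and checking that the weight pairing matches the product rule for jumps. We handle it by working face by face with the local definitions and then globalizing. Alternatively, we could verify skew-symmetry at the local level by adding the contributions of $K$ and $K_F$ on each face, which avoids the global conventions entirely.
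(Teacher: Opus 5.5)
Your proof is correct. For part (b) it coincides with the paper's argument: the paper also reads off from \eqref{eq:PosFormGlobal} that $\dGJmp{\wbold}_F=0$, $\dGJmp{\lambdabold}_F\dGnF=0$, and $\wbold|_F=0$ on boundary faces for $(\wbold,\lambdabold)\in\wavespaceV$. Like you, it takes for granted that the face-wise normal traces of $H(\div)$ fields make sense.

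For part (a) the routes differ. The paper computes nothing and simply cites \cite[Lem.~3.2]{Ziegler2020}. You replace that citation with a self-contained check:
\begin{itemize}
\item You read off symmetry and nonnegativity of $\sbilwavehpos$ from the fact that it is a positively weighted sum of products of jumps and of boundary traces.
\item You obtain $\ip{\waveophskew x_h,x_h}=0$ by elementwise integration by parts plus the weighted product rule for jumps.
\end{itemize}
What your route buys is transparency. It isolates exactly which properties of the upwind fluxes are needed: the complementarity $\alpha_2^{K,F}=\alpha_3^{K_F,F}$ and $\alpha_6^{K,F}=\alpha_7^{K_F,F}$ with weights summing to one, together with the boundary convention $\dGJmp{\vboldh}_{K,F}=-2\vboldh$ that produces the boundary sum in \eqref{eq:SkewFormGlobal}. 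The citation is shorter but hides this structure.

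Two small points to tidy up.
\begin{itemize}
\item The sign. With the paper's convention $\dGJmp{f}_F=f|_{K_2}-f|_{K_1}$ (neighbour minus own), the element boundary terms give $+\dGJmp{\vboldh\cdot(\sigmaboldh\dGnF)}_F$ on each interior face. The four flux terms give exactly $-\dGJmp{\vboldh\cdot(\sigmaboldh\dGnF)}_F$. So your ``up to sign conventions'' resolves in your favour, but you should fix the convention and state this explicitly.
\item Symmetry of the discrete stresses. Replacing $\ip{\epsilon(\vboldh),\sigmaboldh}_{\LTwoK}$ by $\ip{\nabla\vboldh,\sigmaboldh}_{\LTwoK}$ before integrating by parts uses that $\sigmaboldh$ is symmetric. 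Without this, the volume terms would not cancel, so state it as an assumption on the discrete space.
\end{itemize}
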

\begin{proof}
    Assumption~\ref{ass:discr_forms_skew_stab}~(a) is shown in \cite[Lem.~3.2]{Ziegler2020}.
    Furthermore, from representation \eqref{eq:PosFormGlobal}, we directly conclude that for any \(\wavetestvar = (\wbold,\lambdabold) \in \wavespaceV\) it holds that
    \[
        \ip{\waveophpos \wavetestvar, \wavetestvarh}_{L^2(\Omega)} = 0, \qquad \text{for all  } \wavetestvarh \in \wavespaceVh,
    \]
    since \(\dGJmp{\wbold}_F = 0\) for \(\wbold\in H^1_0(\Omega)\) and \(\dGJmp{\lambdabold}_F \dGnF = 0\) for \(\lambdabold \in H(\div,\Omega,\R^{d\times d}_{\operatorname{sym}})\).
    Moreover, on any boundary face \(F\in \dGFaceBnd\) it holds that \(\restr{\wbold}{F} = 0\).
    Hence, Assumption~\ref{ass:discr_forms_skew_stab}~(b) is satisfied.
\end{proof}

It remains to proof two auxiliary results.
Let \(\projwaveRef\,:\, \wavespaceV \to \wavespaceVh\) denote the broken \(L^2\)-orthogonal projection on \(\dGMesh\), i.e., for \(\wavevar\in \wavespaceV\) it holds that
\begin{equation*}
    \ip{\projwaveRef \wavevar - \wavevar, \wavetestvarh}_{\LTwoK} = 0,
    \quad \text{ for all } K\in\dGMesh,\,\, \wavetestvarh \in \wavespaceVh.
\end{equation*}
For any \(\wavevar\in \wavespaceV\) with \(\restr{\wavevar}{K} \in H^{k+1}(K)\), \(K\in \dGMesh\), we have the approximation properties, see, e.g., \cite[Sec.~18.4]{Ern.G.2021.FEAI},
\begin{equation} \label{eq:LTwoApproxProp}
    \begin{aligned}
    \|\projwaveRef x - x\|_{\LTwoK} &\leq h_K^{k+1} |x|_{H^{k+1}(K)},\\
    \|\projwaveRef x - x\|_{\LTwoF} &\leq h_K^{k+1/2} |x|_{H^{k+1}(K)},\quad F\in \dGFaceK.
\end{aligned}
\end{equation}
\begin{lemma}\label{lem:EllWaveApproxLemma}
    Let \(\wavevar\in \wavespaceV\) and \(\wavetestvarh\in\wavespaceVh\) with
    \(\restr{\wavevar}{K} \in H^{k+1}(K)\), \(K\in \dGMesh\).
    Then
    \begin{align*}
        |\sbilwaveh(\projwaveRef x - x, \wavetestvarh)|
            &\lesssim 
                h^{k+1/2} \Bigl(\sum_{K\in\dGMesh}|x|_{H^{k+1}(K)}^2\Bigr)^{1/2}
                \stabnormwave{\wavetestvarh}.
    \end{align*}
\end{lemma}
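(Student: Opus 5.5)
We write $\eta_h := \projwaveRef x - x$ and split $\sbilwaveh(\eta_h,\wavetestvarh) = \sbilwavehskew(\eta_h,\wavetestvarh) + \sbilwavehpos(\eta_h,\wavetestvarh)$ using the global representations \eqref{eq:SkewFormGlobal} and \eqref{eq:PosFormGlobal}, now evaluated on $\wavespaceV + \wavespaceVh$. The guiding principle is the usual one for upwind dG: the volume terms vanish by $L^2$-orthogonality of the broken projection, and the remaining face terms carry a factor $h^{k+1/2}$ from the trace estimate \eqref{eq:LTwoApproxProp}. These face terms must then be paired with jumps of $\wavetestvarh$, which are controlled by $\stabnormwave{\wavetestvarh}$.

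\textbf{Stabilization part.} For $\sbilwavehpos$, each term in \eqref{eq:PosFormGlobal} is a face integral of a jump (or boundary trace) of $\eta_h$ against the corresponding weighted jump of $\wavetestvarh$. The weights $\alpha_1^F,\alpha_4^F,\alpha_5^F,\alpha_8^F$ and $\rho_Fc_{P,F},\rho_Fc_{S,F}$ are bounded above and below by the fixed material parameters. Cauchy--Schwarz in the weighted face inner products gives $|\sbilwavehpos(\eta_h,\wavetestvarh)|\lesssim (\sum_F \|\jmp{\eta_h}\|_{L^2(F)}^2)^{1/2}\stabnormwave{\wavetestvarh}$. Alternatively, one can directly use the Cauchy--Schwarz inequality for the symmetric positive semidefinite form $\sbilwavehpos$, extended to $\wavespaceV+\wavespaceVh$. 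The face traces of $\eta_h$ from both neighbouring elements are then bounded by \eqref{eq:LTwoApproxProp}, and summing over faces with shape regularity yields $h^{k+1/2}(\sum_K|x|_{H^{k+1}(K)}^2)^{1/2}$.

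\textbf{Skew part.} This is the main obstacle, because the volume terms $\ip{\div\sigmabold_h,\wbold_h}_K$ and $\ip{\epsilon(\vbold_h),\lambdabold_h}_K$ put the derivative on the projection error rather than on the discrete function. My first step is to integrate by parts elementwise, moving the derivatives onto $\wavetestvarh$. Then $\ip{\eta_{h,\sigma},\nabla \wbold_h}_K$ and $\ip{\eta_{h,v},\div\lambdabold_h}_K$ vanish by orthogonality, since $\nabla\wbold_h$ and $\div\lambdabold_h$ are broken polynomials of degree $\le k-1$. What remains are element-boundary terms $\ip{\eta_{h,\sigma}n_{K,F},\wbold_h}_F$ and $\ip{\eta_{h,v},\lambdabold_h n_{K,F}}_F$. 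Summing over $K$ and regrouping with \eqref{eq:GloablNormalTangential} and \eqref{eq:GlobalJmpAvg}, these combine with the flux terms of \eqref{eq:SkewFormGlobal}. I expect the weighted averages with weights $\alpha_2,\alpha_3,\alpha_6,\alpha_7$, which satisfy $\alpha_2^{K,F}+\alpha_3^{K,F}=1$, to be chosen exactly so that every surviving face term involves only a jump of the test function. In normal and tangential components this should give the form $\ip{\{\!\{\eta_h\}\!\}^{\alpha'}\!, \jmp{\wavetestvarh}}_F$ with complementary weights, and on boundary faces a trace of $\eta_h$ against $\wbold_h$ (or $\lambdabold_h n$) itself. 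This identity is the equivalent-flux reformulation of the upwind scheme, cf. \cite{HocPSTW15,Ziegler2020}.

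\textbf{Conclusion.} Once the skew part is in this form, each face term is bounded by $\|\eta_h\|_{L^2(F)}\,\|\jmp{\wavetestvarh}\|_{L^2(F)}$, in the components appearing in \eqref{eq:PosFormGlobal}. The boundary terms need care: they involve exactly the trace of $\vbold_h$ (normal and tangential components) weighted by $\rho c$, which is controlled by $\stabnormwave{\cdot}$. Terms involving $\lambdabold_h n$ on the boundary should cancel between the skew boundary sum and the integration by parts. Using again \eqref{eq:LTwoApproxProp} and the fact that the positive weights are bounded below, we obtain the claimed bound $h^{k+1/2}(\sum_K|x|^2_{H^{k+1}(K)})^{1/2}\stabnormwave{\wavetestvarh}$. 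The delicate bookkeeping of which jump components appear with nonzero stabilization weight, including the boundary convention $\jmp{\vbold_h}=-2\vbold_h|_F$, is the step I would verify most carefully.
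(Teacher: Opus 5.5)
Your proposal is correct and takes essentially the paper's route. The paper writes $\ip{\waveophskew(\projwaveRef x - x), \wavetestvarh}_{L^2} = -\ip{\projwaveRef x - x, \waveophskew \wavetestvarh}_{L^2}$ ``by skew-adjointness'', kills the volume terms by orthogonality of the broken $L^2$-projection, bounds the remaining face terms by Cauchy--Schwarz and \eqref{eq:LTwoApproxProp}, and treats $\sbilwavehpos$ the same way; your elementwise integration by parts (averages of $\eta_h$ against jumps of $\wavetestvarh$, $\eta_{h,\sigma}n$ against $\wboldh$ on $\partial\Omega$, with the $\lambdaboldh n$ boundary terms cancelling) is exactly what justifies that skew-adjointness identity for the non-discrete argument $\projwaveRef x - x$, which the paper uses without comment.
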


\begin{proof}
    Let  \(\wavevar = (\vbold,\sigmabold)\) and \(\wavetestvarh = (\wboldh,\lambdaboldh)\).
    Since \(\waveophskew\) is skew-adjoint it holds that \(\ip{\waveophskew(\projwaveRef x - x), \wavetestvarh}_{L^2} = -\ip{\projwaveRef x - x, \waveophskew\wavetestvarh}_{L^2}\).
    Hence, for any \(K\in\dGMesh\) the bulk terms in \eqref{eq:SkewFormGlobal} vanish, i.e.
    \begin{equation*}
        \ip{\div \lambdaboldh, J_{h,2} \vbold -\vbold}_{\LTwoK}
            = \ip{\epsilon(\wboldh), J_{h,1}\sigmabold - \sigmabold }_{\LTwoK}
            = 0,
    \end{equation*}
    by the definition of the broken \(L^2\)-projection.
    Thus, \eqref{eq:SkewFormGlobal} only depends on faces, and we obtain with the Cauchy-Schwarz inequality that
    \begin{align*}
        |\ip{\waveophskew(\projwaveRef x - x), &\wavetestvarh}_{L^2}|\\
            &\lesssim \Bigl(\sum_{F\in\dGFace} 
                \|J_{h,1}\vbold - \vbold\|_{\LTwoF}^2 
                + \|J_{h,2}\sigmabold \dGnF - \sigmabold\dGnF\|_{\LTwoF}^2\Bigr)^{1/2}
                \stabnormwave{\wavetestvarh}\\
            &\lesssim h^{k+1/2}\Bigl(\sum_{K\in\dGMesh}|x|_{H^{k+1}(K)}^2\Bigr)^{1/2} \stabnormwave{\wavetestvarh},
    \end{align*}
    where we used the approximation properties \eqref{eq:LTwoApproxProp} in the last estimate.
    A similar estimate shows that
    \begin{align*}
        |\ip{\waveophpos(\projwaveRef x - x), \wavetestvarh}_{L^2}|
            &\lesssim h^{k+1/2}\Bigl(\sum_{K\in\dGMesh}|x|_{H^{k+1}(K)}^2\Bigr)^{1/2} \stabnormwave{\wavetestvarh},
    \end{align*}
    which proves the claim since \(\waveoph = \waveophskew + \waveophpos\).
\end{proof}

\begin{lemma}\label{lem:EllWaveApproxLemmaTwo}

    Let \(\wavevar\in \wavespaceV\), \(\heatvar\in \heatspaceV\) with \(\restr{\wavevar}{K} \in H^{k+1}(K)\), \(K\in \dGMesh\), and \(\heatvar\in H^{\ell+1}(\Omega)\).
    Then
    \begin{align*}
        |\sbilwaveh( \projwaveRef \wavevar -\wavevar, (0,I \cdot (\projheatRef\heatvar - \heatvar)) |
            &\lesssim
                 h^{k + \ell + 1}
                \Bigl(\sum_{K\in\dGMesh}|x|_{H^{k+1}(K)}^2\Bigr)^{1/2} | \heatvar|_{H^{\ell+1}(\Omega)}.
    \end{align*}
\end{lemma}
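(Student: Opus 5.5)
The plan is to exploit the special structure of the test function $\wavetestvarh=(0,\lambdabold)$ with $\lambdabold = I\cdot\zeta$ and $\zeta=\projheatRef\heatvar-\heatvar$, where $\projheatRef$ is the Ritz projection into continuous Lagrange elements of degree $\ell$. The key facts about $\zeta$ are:
\begin{itemize}
\item $\zeta\in H^1_0(\Omega)$, so $\lambdabold$ is continuous across faces, $\dGJmp{\lambdabold}_F\dGnF=0$ on interior faces, and $\lambdabold=0$ on $\partial\Omega$;
\item the standard Ritz bounds $\|\zeta\|_{L^2}\lesssim h^{\ell+1}|\heatvar|_{H^{\ell+1}}$ and $\|\nabla\zeta\|_{L^2}\lesssim h^{\ell}|\heatvar|_{H^{\ell+1}}$ hold.
\end{itemize}
Combining these with the elementwise trace inequality on the quasi-uniform mesh gives the face bound
\begin{equation*}
\Bigl(\sum_{F}\|\zeta\|_{\LTwoF}^2\Bigr)^{1/2}\lesssim h^{-1/2}\|\zeta\|_{L^2}+h^{1/2}\|\nabla\zeta\|_{L^2}\lesssim h^{\ell+1/2}|\heatvar|_{H^{\ell+1}}.
\end{equation*}
Throughout, write $e=\projwaveRef\wavevar-\wavevar=(e_{\vbold},e_{\sigmabold})$. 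The approximation properties \eqref{eq:LTwoApproxProp} then give $\|e\|_{L^2}\lesssim h^{k+1}$ and $(\sum_F\|e\|_{\LTwoF}^2)^{1/2}\lesssim h^{k+1/2}$, each times $(\sum_K|\wavevar|^2_{H^{k+1}(K)})^{1/2}$.

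First, split $\sbilwaveh=\sbilwavehskew+\sbilwavehpos$ and dispose of the stabilization part. By \eqref{eq:PosFormGlobal}, every term of $\sbilwavehpos(e,(0,\lambdabold))$ pairs the jumps of $e$ either with jumps of the first test component, which is zero, or with $\dGJmp{\lambdabold}_F\dGnF$. The latter vanishes on interior faces by continuity of $\zeta$. The boundary terms of \eqref{eq:PosFormGlobal} only involve the first test component, hence also vanish. Thus only the skew form \eqref{eq:SkewFormGlobal} remains.

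In \eqref{eq:SkewFormGlobal} with first test component zero, the terms carrying jumps of $\sigmabold$ pair with averages of $\wboldh=0$ and drop out. What remains is:
\begin{itemize}
\item the bulk term $\sum_K\ip{\epsilon(e_{\vbold}),\zeta I}_{\LTwoK}=\sum_K\ip{\div e_{\vbold},\zeta}_{\LTwoK}$;
\item the interior face terms pairing $\dGJmp{e_{\vbold}}_F$ with averages of $\lambdabold\dGnF=\zeta\dGnF$;
\item boundary terms pairing $e_{\vbold}$ with $\lambdabold\dGnF$, which vanish because $\zeta=0$ on $\partial\Omega$.
\end{itemize}
All face terms are bounded by Cauchy--Schwarz:
\begin{equation*}
\Bigl(\sum_F\|e\|_{\LTwoF}^2\Bigr)^{1/2}\Bigl(\sum_F\|\zeta\|_{\LTwoF}^2\Bigr)^{1/2}\lesssim h^{k+1/2}\,h^{\ell+1/2}=h^{k+\ell+1},
\end{equation*}
times the stated seminorms.

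The main obstacle is the bulk term, since $\div e_{\vbold}$ is not small and orthogonality of the broken $L^2$-projection cannot be used directly, because $\zeta$ is not a broken polynomial. I would integrate by parts elementwise:
\begin{equation*}
\ip{\div e_{\vbold},\zeta}_{\LTwoK}=-\ip{e_{\vbold},\nabla\zeta}_{\LTwoK}+\ip{e_{\vbold}\cdot n_{K},\zeta}_{L^2(\partial K)}.
\end{equation*}
The volume part is bounded by $\|e\|_{L^2}\|\nabla\zeta\|_{L^2}\lesssim h^{k+1}h^{\ell}$. The element-boundary part is bounded exactly as the face terms above, by $h^{k+1/2}h^{\ell+1/2}$. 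Collecting all contributions yields the claimed bound $h^{k+\ell+1}(\sum_K|\wavevar|^2_{H^{k+1}(K)})^{1/2}|\heatvar|_{H^{\ell+1}}$.
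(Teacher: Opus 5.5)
Your proof is correct, and apart from the bulk term it follows the paper's argument. In both, the stabilization part vanishes by continuity of $\zeta$ and the zero first test component, and the $\sigmabold$-jump terms drop out. The boundary terms vanish since $\zeta=0$ on $\partial\Omega$. The interior face terms are bounded by pairing the $h^{k+1/2}$ face estimate of \eqref{eq:LTwoApproxProp} with the $h^{\ell+1/2}$ trace bound for the Ritz error.

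The two routes differ on the bulk term $\sum_K\langle \epsilon(e_{\vbold}),\zeta I\rangle_{L^2(K)}$. The paper estimates it directly as $\|\epsilon(\projwaveRef^1\vbold-\vbold)\|_{L^2}\,\|\zeta\|_{L^2}\lesssim h^{k}\cdot h^{\ell+1}$. This uses the broken $H^1$-seminorm approximation property of the elementwise $L^2$-projection, which is standard but is not one of the two estimates listed in \eqref{eq:LTwoApproxProp}. So your diagnosis that $\div e_{\vbold}$ ``is not small'' is off. It is $O(h^k)$, and that suffices because $\zeta$ is $O(h^{\ell+1})$ in $L^2$; there is no real obstacle at this step.

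Your elementwise integration by parts instead moves the derivative onto $\zeta$. It pairs $\|e_{\vbold}\|_{L^2}\lesssim h^{k+1}$ with $\|\nabla\zeta\|_{L^2}\lesssim h^{\ell}$, and produces element-boundary terms of the same type as the face terms already present. You bound those the same way, and the boundary faces again drop out.

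What each buys: the paper's route is more direct. Yours uses only the $L^2$ and face estimates of \eqref{eq:LTwoApproxProp}, together with the Ritz bounds you already need for the trace estimate. It therefore avoids the separate broken-gradient estimate for $\projwaveRef$, which the paper obtains via an inverse inequality. Neither version saves regularity, since both rely on the Aubin--Nitsche $L^2$ bound for the Ritz projection through the face terms.
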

\begin{proof}
        Since \(\heatvar \in \heatspaceV = H^1_0(\Omega)\) and 
        $\heatspaceVh \subset  H^1_0(\Omega)$,
         we conclude that \((0,I \cdot (\projheatRef\heatvar - \heatvar)) \in \wavespaceV\).
        Furthermore, since 
        \(\heatvar,\projheatRef\heatvar\) are continuous, any jump terms in the stabilization form \eqref{eq:PosFormGlobal} vanish, and we only need to estimate \eqref{eq:SkewFormGlobal}.
        Let \(K\in\dGMesh\).
        Since the first volume term in \eqref{eq:SkewFormGlobal} vanishes, we only estimate the second one and obtain with the inverse inequality and the first interpolation estimate in \eqref{eq:LTwoApproxProp} that
        \begin{align*}
            \sum_{K\in\dGMesh}&|\ip{\epsilon(\projwaveRef^1\vbold - \vbold), I \cdot (\projheatRef\heatvar - \heatvar)}_{L^2(K)}|\\
            &\qquad\qquad\qquad\qquad\lesssim 
                h^k \bigl(\sum_{K\in\dGMesh}|\vbold|_{H^{k+1}(K)^2}^2\bigr)^{1/2}
                \|\projheatRef\heatvar -\heatvar\|_{L^2(\Omega)}\\
            &\qquad\qquad\qquad\qquad\lesssim
                h^{k+\ell+1} \bigl(\sum_{K\in\dGMesh}|\vbold|_{H^{k+1}(K)^2}^2\bigr)^{1/2}|\heatvar|_{H^{k+1}(\Omega)},
        \end{align*}
        where we used the convergence properties of the Ritz-projection \(\projheatRef\) in the last estimate.
        Let \(F\in\dGFaceK\).
        By the multiplicative trace inequality, see, for example, \cite[Lem.~12.15]{Ern.G.2021.FEAI}, it holds for \(\eta\in H^1(K)\) that
        \begin{equation*}
            \|\eta\|_{L^2(F)}
                \lesssim h_K^{-1/2} \|\eta\|_{L^2(K)}
                    + h_K^{1/2} |\eta|_{H^1(K)}
        \end{equation*}
        Hence, \(\eta = \projheatRef \heatvar - \heatvar\) shows together with the convergence properties of the Ritz projection that
        \begin{equation*}
            \bigl(\sum_{F\in\dGFaceInt}\|\projheatRef \heatvar - \heatvar\|_{L^2(F)}^2\bigr)^{1/2}
                \lesssim h^{\ell+1/2} |\heatvar|_{H^{\ell+1}(\Omega)}.
        \end{equation*}
        We apply this equality to the remaining face terms in \eqref{eq:SkewFormGlobal}.
        It holds for \(F\in \dGFaceInt\) that
        \begin{align*}
            \dGnF \cdot (\dGAvg{I \cdot \eta }_F^{\alpha_3}\dGnF)
                &= \dGAvg{I \cdot \eta }_F^{\alpha_3} \dGnF \cdot I \dGnF = \eta,\qquad &&\text{ on } F,\\
            \dGtF \cdot (\dGAvg{I \cdot \eta }_F^{\alpha_3}\dGnF)
                &= \dGAvg{I \cdot \eta }_F^{\alpha_7} \dGtF \cdot I \dGnF = 0,\qquad &&\text{ on } F.
        \end{align*}
        Thus, with the second estimate in \eqref{eq:LTwoApproxProp} we conclude
        \begin{align*}
            \sum_{F\in\dGFaceInt}|\ip{\dGnF \cdot &\dGJmp{\projwaveRef^1\vbold - \vbold}_{F},
                        \dGnF \cdot (\dGAvg{I \cdot \eta }_F^{\alpha_3}\dGnF)}_{\LTwoF}|\\
                &\leq h^{k+\ell+1}\bigl(\sum_{K\in\dGMesh} |\vbold|_{H^{k+1}(K)^2}\bigr)^{1/2}|\theta|_{H^{\ell+1}(\Omega)}.
        \end{align*}
        This proves the claim.
\end{proof}

\subsection{Maxwell's equations with interface coupling}\label{appendix:Maxwell}

In this section we verify Assumption~\ref{ass:cont_forms} for the Maxwell problem of Section~\ref{sec:ana_Maxwell} and prove that the corresponding evolution equation is well-posed.
Compared to Section~\ref{sec:ana_Maxwell} we admit piecewise constant material parameters
\begin{equation}\label{eq:app_mw_materials}
    \restr{\eps}{\Omega_i} \equiv \eps^{(i)} > 0,
    \quad
    \restr{\mu}{\Omega_i} \equiv \mu^{(i)} > 0,
    \qquad i \in \monosetc{1,2},
\end{equation}
i.e., the situation of Section~\ref{sec:ana_Maxwell} is recovered for \(\eps = \mu = 1\).
The governing equations then read
\begin{subequations}\label{eq:app_mw_pde}
\begin{align}
    \eps^{(i)} \pt E^{(i)} - \curl H^{(i)} &= J^{(i)},
        &&\text{in } [0,T]\times \Omega_i,
        \label{eq:app_mw_pde_E}\\
    \mu^{(i)} \pt H^{(i)} + \curl E^{(i)} &= 0,
        &&\text{in } [0,T]\times \Omega_i,
        \label{eq:app_mw_pde_H}\\
    \pt \heatvar + (\heatop\otimes I_2)\heatvar
        &= (c_\ell \otimes I_2) \wavevarOneparE + g,
        &&\text{on } [0,T]\times \Fint,
        \label{eq:app_mw_pde_ode}
\end{align}
supplemented by the perfectly conducting boundary condition and the interface condition
\begin{align}
    E \times \nu &= 0, &&\text{on } [0,T]\times\partial\Omega,\\
    \jmp{H\times \nint}_\Fint &= -(c_\ell^T \otimes I_2)\heatvar,
        &&\text{on } [0,T]\times \Fint.
        \label{eq:app_mw_pde_jump}
\end{align}
\end{subequations}
Throughout, \(\heatop \in \R^{\ell\times\ell}\) satisfies \(\varphi^T\heatop\varphi \geq 0\) for all \(\varphi\in\R^\ell\), i.e., its symmetric part is positive semi-definite, and \(c_\ell \in \monosetc{0,1}^\ell\), \(c_\ell\neq 0\).

We write \(\jmp{H_\tang} = \jmp{H\times\nint}_\Fint\) for the tangential jump and \(\wavevarOneparE\) for the tangential component of \(E\) on \(\Fint\).

The spaces are the ones of Section~\ref{sec:ana_Maxwell}.
For convenience, we repeat them here.
With \(\wavevar = (\wavevarOne,\wavevarTwo) = (E,H)\) we set
\begin{equation}\label{eq:app_mw_spaces}
    \wavespace = L^2(\Omega)^6,
    \quad
    \wavespaceV = H^{\imp}_0(\curl) \times H^\imp(\pcurl),
    \qquad
    \heatspace = \heatspaceV = L^2(\Fint)^{2\ell},
\end{equation}
with the curl-spaces of improved tangential regularity
\begin{align*}
    H^\imp_0(\curl) &= \setc{E\in L^2(\Omega)^3}{\curl E\in L^2(\Omega)^3,\, \wavevarOneparE \in L^2(\Fint)^2,\, E\times\nu = 0},\\
    H^\imp(\pcurl) &= \setc{H\in L^2(\Omega)^3}{\pcurl H \in L^2(\Omega)^3,\, \jmp{H_\tang}_\Fint \in L^2(\Fint)^2}.
\end{align*}
Here, \(\pcurl\) denotes the piecewise curl, i.e., \(\pcurl H \in L^2(\Omega)^3\) if and only if \(\curl H^{(i)} \in L^2(\Omega_i)^3\) for \(i \in\monosetc{1,2}\).
Both spaces are equipped with their graph norm.
The domain \(\fullspace\) of Assumption~\ref{ass:cont_forms}~(c) is specified in Corollary~\ref{cor:app_mw_ass_cont_forms} below.

The forms are as in Section~\ref{sec:ana_Maxwell} as well, except that the inner products carry the material weights,
\begin{equation}\label{eq:app_mw_forms}
    \ipwave(x,y) = \ip{\eps x_1, y_1} + \ip{\mu x_2, y_2},
    \qquad
    \ipheat(\heatvar,\varphi) = \ip{\heatvar,\varphi}_{L^2(\Fint)^{2\ell}} ,
\end{equation}
and \(\ipwave(f,y) = \ip{J,y_1}\), i.e., \(f = (\eps^{-1}J,0)\).
Since \(\eps\) and \(\mu\) are bounded from above and below by positive constants, \(\ipwave\) induces a norm which is equivalent to the one of \(L^2(\Omega)^6\).
Hence, part~(a) of Assumption~\ref{ass:cont_forms} holds.
The unweighted inner product on \(\Omega\) is still denoted with \(\ip{\cdot,\cdot}\).
Recall further the bilinear forms
\begin{equation}\label{eq:app_mw_bilforms}
\begin{aligned}
    \sbilwave(x,y) &= \ip{\curl x_1, y_2} - \ip{x_2,\curl y_1},
    &&
    \sbilheat(\heatvar,\varphi) = \ipheat\bigl((\heatop\otimes I_2)\heatvar,\varphi\bigr),
    \\
    \coupopfullform(\varphi,x) &= \ipheat\bigl(\varphi, (c_\ell\otimes I_2)\wavevarOnepar\bigr),
\end{aligned}
\end{equation}
for \(x,y \in \wavespaceV\) and \(\heatvar,\varphi\in\heatspaceV\).
Part~(b) of Assumption~\ref{ass:cont_forms} follows from
\begin{equation*}
    \sbilwave(y,y) = \ip{\curl y_1, y_2} - \ip{y_2, \curl y_1} = 0,
    \qquad
    \sbilheat(\varphi,\varphi) = \ip{(\heatop\otimes I_2)\varphi,\varphi}_{L^2(\Fint)^{2\ell}} \geq 0 .
\end{equation*}
Part~(c) of Assumption~\ref{ass:cont_forms} is an integration-by-parts formula.
Such a formula is available in \cite[Thm.~2.2]{DoeNS26}, but only on the divergence-free energy space.
Assumption~\ref{ass:cont_forms}~(c), however, has to hold for all test functions \(y \in \wavespaceV\).
The gap between the two statements is bridged by the Hodge projections of Section~\ref{app:mw:hodge}.
The same projectors allow us to treat sources \(J \in L^2(\Omega)^3\), which do not respect Gauss's law and therefore leave the divergence-free setting of \cite{DoeNS26}.

\subsubsection{Hodge projections}
\label{app:mw:hodge}

Following \cite{DoeNS26}, we use the divergence-free spaces
\begin{align*}
    \divspaceE &= \setc{V\in L^2(\Omega)^3}{\pdiv (\eps V) = 0},\\
    \divspaceH &= \setc{V\in L^2(\Omega)^3}{\div(\mu V) = 0,\, \mu V \cdot \nu = 0 \text{ on } \partial\Omega},
\end{align*}
where \(\pdiv\) denotes the piecewise divergence, i.e., \(\pdiv W = 0\) if and only if \(\div W^{(i)} = 0\) for \(i \in \monosetc{1,2}\), and the associated energy space
\begin{equation*}
    \energyspace
    =
    \bigl( H^\imp_0(\curl) \cap \divspaceE \bigr)
    \times
    \bigl( H^\imp(\pcurl) \cap \divspaceH \bigr) .
\end{equation*}
Note that no condition is imposed on the normal jump in \(\divspaceE\).
The interface condition \eqref{eq:app_mw_pde_jump} already constrains this degree of freedom via a conservation law, cf. \cite[Thm.~2.6]{DoeNS26}.

Since both \(\divspaceE\) and \(\divspaceH\) are closed subspaces of \(L^2(\Omega)^3\), there exist orthogonal projections
\begin{equation*}
    \projE\colon L^2(\Omega)^3 \to \divspaceE,
    \qquad
    \projH\colon L^2(\Omega)^3 \to \divspaceH,
\end{equation*}
with respect to the weighted inner products \(\ip{\eps\cdot,\cdot}\) and \(\ip{\mu\cdot,\cdot}\), respectively.
The following lemma collects the properties we need.
See Lemma~3.1 of \cite{DoeNS26} for the proof.

\begin{lemma}\label{lem:app_mw_hodge}
    \noindent{\normalfont(a)}
    For \(y\in H^\imp_0(\curl)\) it holds
    \begin{equation*}
        \projE y \in H_0^\imp(\curl) \cap \divspaceE,
        \qquad
        \curl \projE y = \curl y,
        \qquad
        (\projE y)_\parallel = y_\parallel.
    \end{equation*}
    \noindent{\normalfont(b)}
    For \(v\in H^\imp(\pcurl)\) it holds
    \begin{equation*}
        \projH v \in H^\imp(\pcurl) \cap \divspaceH,\qquad
        \pcurl \projH v = \pcurl v,
        \qquad
        \jmp{(\projH v)_\tang} = \jmp{v_\tang}.
    \end{equation*}
\end{lemma}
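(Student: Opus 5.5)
The plan is to identify the weighted orthogonal complements of \(\divspaceE\) and \(\divspaceH\) explicitly as gradient spaces. The claimed properties then reduce to the facts that gradients are curl-free and have the correct tangential traces.

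\emph{Part (a).} I would first show that, with respect to \(\ip{\eps\cdot,\cdot}\),
\begin{equation*}
    \divspaceE^{\perp} = \setc{\nabla\varphi}{\varphi\in H^1_0(\Omega)}.
\end{equation*}
Here it matters that \(\varphi\) vanishes on \(\Fint\) as well as on \(\partial\Omega\). Indeed, \(V\in\divspaceE\) means \(\div(\eps V)^{(i)}=0\) in each \(\Omega_i\), with no condition on normal traces. By the definition of the distributional divergence on \(\Omega_i\), this is equivalent to \(\ip{\eps V,\nabla\varphi_i}_{L^2(\Omega_i)}=0\) for all \(\varphi_i\in H^1_0(\Omega_i)\). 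Since \(H^1_0(\Omega_1)\oplus H^1_0(\Omega_2)\) coincides with \(\setc{\varphi\in H^1_0(\Omega)}{\varphi|_{\Fint}=0}\), the space \(\divspaceE\) is the \(\eps\)-annihilator of these gradients. The gradient space is closed by the Poincar\'e inequality, so it is the full complement. Consequently \(y-\projE y=\nabla\varphi\) with \(\varphi\in H^1_0(\Omega)\) and \(\varphi|_{\Fint}=0\).

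From this representation the claims in (a) follow:
\begin{itemize}
    \item \(\curl\nabla\varphi=0\) in \(\mathcal D'(\Omega)\), hence \(\curl\projE y=\curl y\).
    \item \(\nabla\varphi\times\nu=0\) on \(\partial\Omega\), because the tangential trace of \(\nabla\varphi\) is the surface gradient of \(\varphi|_{\partial\Omega}=0\).
    \item \((\nabla\varphi)_\parallel=0\) on \(\Fint\), by the same argument applied to \(\varphi|_{\Fint}=0\).
\end{itemize}
Hence \(\projE y\in H^\imp_0(\curl)\cap\divspaceE\) and \((\projE y)_\parallel=y_\parallel\).

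\emph{Part (b).} Analogously, with respect to \(\ip{\mu\cdot,\cdot}\), I would show
\begin{equation*}
    \divspaceH^\perp=\setc{\nabla\psi}{\psi\in H^1(\Omega)}.
\end{equation*}
The condition \(\ip{\mu V,\nabla\psi}=0\) for all \(\psi\in H^1(\Omega)\) encodes, via the generalized Green formula in \(H(\div)\), exactly \(\div(\mu V)=0\) together with \(\mu V\cdot\nu=0\) on \(\partial\Omega\). The gradient space is closed by the Poincar\'e--Wirtinger inequality on the connected domain \(\Omega\). Thus \(v-\projH v=\nabla\psi\) with \(\psi\in H^1(\Omega)\), which is a global \(H^1\) function, continuous across \(\Fint\) in the trace sense. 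Then \(\nabla\psi\in H(\curl,\Omega)\) with \(\curl\nabla\psi=0\) globally. In particular the piecewise curls vanish and the tangential jump \(\jmp{(\nabla\psi)_\tang}\) across \(\Fint\) is zero. This gives \(\pcurl\projH v=\pcurl v\), \(\jmp{(\projH v)_\tang}=\jmp{v_\tang}\in L^2(\Fint)^2\), and therefore \(\projH v\in H^\imp(\pcurl)\cap\divspaceH\).

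\emph{Main obstacle.} The delicate step is the trace bookkeeping, since the gradients involved are a priori only in \(L^2\). One needs the tangential trace of \(\nabla\varphi\), defined in \(H^{-1/2}\) through the \(H(\curl)\) trace, to be identified with the surface gradient of the \(H^{1/2}\) trace of \(\varphi\). I would argue by density of smooth functions in \(H^1_0(\Omega)\), respectively \(H^1(\Omega)\), combined with continuity of both sides of the identity. For part (a) this density must respect the constraint \(\varphi|_{\Fint}=0\), which is why I would approximate \(\varphi_i\in H^1_0(\Omega_i)\) separately on each subdomain.
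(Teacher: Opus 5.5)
Your proof is correct. The paper gives no argument of its own and only refers to Lemma~3.1 of \cite{DoeNS26}, whereas you give a self-contained Helmholtz-decomposition proof. The weighted complements are $\nabla\bigl(H^1_0(\Omega_1)\oplus H^1_0(\Omega_2)\bigr)$ for $\divspaceE$ and $\nabla H^1(\Omega)$ for $\divspaceH$, both closed by Poincar\'e-type inequalities. These gradients are curl-free, have vanishing tangential trace on $\partial\Omega\cup\Fint$ in (a), and have vanishing tangential jump across $\Fint$ in (b). Hence $\projE y = y-\nabla\varphi$ and $\projH v = v-\nabla\psi$ inherit all the claimed properties.

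Your density argument settles the trace bookkeeping. It approximates each $\varphi_i\in H^1_0(\Omega_i)$ separately by $C_c^\infty(\Omega_i)$ functions, and $\psi$ by functions smooth up to the boundary. It is even simpler than identifying surface gradients, since the $H(\curl)$-norm of a gradient equals its $L^2$-norm. Your route also makes visible why the missing normal-jump condition in $\divspaceE$ is exactly what forces the potential to vanish on $\Fint$, and hence why $y_\parallel$ is preserved.

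Only correct your first display to $\{\nabla\varphi : \varphi\in H^1_0(\Omega),\ \varphi|_{\Fint}=0\}$, as your text already says. With all of $H^1_0(\Omega)$ it would describe the complement of the globally divergence-free fields instead.
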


We can now extend the integration-by-parts formula \cite[Thm.~2.2]{DoeNS26} to the full spaces \(\wavespaceV = H_0^\imp(\curl) \times H^\imp(\pcurl)\), i.e., without any constraint on the divergence.
\begin{lemma}\label{lem:app_mw_ibp}
    For all \(y \in H^\imp_0(\curl)\) and \(v \in H^\imp(\pcurl)\) we have
    \begin{equation}\label{eq:app_mw_ibp}
        \ip{y, \pcurl v} - \ip{\curl y, v}
        = \ip{y_\parallel, \jmp{v_\tang}}_{L^2(\Fint)^2} .
    \end{equation}
\end{lemma}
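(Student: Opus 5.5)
The plan is to reduce \eqref{eq:app_mw_ibp} to the divergence-free integration-by-parts formula \cite[Thm.~2.2]{DoeNS26}. That formula holds on the energy space \(\energyspace\), and we bridge to general \(y,v\) with the Hodge projections of Lemma~\ref{lem:app_mw_hodge}. We split \(y = \projE y + (I-\projE)y\) and \(v = \projH v + (I-\projH)v\). By Lemma~\ref{lem:app_mw_hodge}, the pair \((\projE y,\projH v)\) lies in \(\energyspace\). Moreover, the right-hand side of \eqref{eq:app_mw_ibp} is unchanged when \(y,v\) are replaced by their projections, since \((\projE y)_\parallel = y_\parallel\) and \(\jmp{(\projH v)_\tang} = \jmp{v_\tang}\). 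The curls are also unchanged: \(\curl\projE y = \curl y\) and \(\pcurl\projH v = \pcurl v\). Hence
\begin{equation*}
    \ip{y,\pcurl v} - \ip{\curl y, v}
    = \ip{\projE y,\pcurl \projH v} - \ip{\curl \projE y, \projH v}
      + \ip{(I-\projE)y,\pcurl v} - \ip{\curl y,(I-\projH)v}.
\end{equation*}
The first difference equals \(\ip{y_\parallel,\jmp{v_\tang}}_{L^2(\Fint)^2}\) by \cite[Thm.~2.2]{DoeNS26}, with the material weights set to one or carried consistently. It therefore remains to show that the two remainder terms vanish.

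For the remainders, the idea is that \(\pcurl v\) is orthogonal to the complement of \(\divspaceE\) and \(\curl y\) is orthogonal to the complement of \(\divspaceH\) in the appropriate weighted inner products. Concretely, I would characterize \(\divspaceE^{\perp}\) with respect to \(\ip{\eps\cdot,\cdot}\) as the set of piecewise gradients \(\eps^{-1}\)-weighted, i.e. \((I-\projE)y = \widetilde\nabla \phi\) with \(\phi\) piecewise \(H^1\) vanishing on \(\partial\Omega\). No continuity across \(\Fint\) is needed, because no normal-jump condition is imposed in \(\divspaceE\). Similarly, \((I-\projH)v = \nabla\psi\) with \(\psi\in H^1(\Omega)\), where \(\divspaceH\) encodes \(\mu V\cdot\nu=0\) on \(\partial\Omega\).

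Then \(\ip{\widetilde\nabla\phi,\pcurl v}\) is computed on each \(\Omega_i\) by elementwise integration by parts using \(\div\curl = 0\). This leaves boundary terms \(\ip{\phi, \curl v\cdot n}\) on \(\partial\Omega_i\). These vanish on \(\partial\Omega\) since \(\phi=0\) there. On \(\Fint\) they involve the surface divergence of the tangential trace of \(v\); these are the terms I would rather avoid, so I expect to instead use the projection identities in \cite[Lemma~3.1]{DoeNS26} directly, namely that \((I-\projE)y\) is \(\eps\)-orthogonal to \(\eps^{-1}\pcurl v\). Analogously, \(\ip{\curl y,\nabla\psi}=0\) follows from \(\div\curl y=0\) and \(y\times\nu=0\) on \(\partial\Omega\).

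The main obstacle is the interface: the piecewise-gradient term \(\ip{(I-\projE)y,\pcurl v}\) couples to traces on \(\Fint\). Here the weights and the exact form of \(\divspaceE\) must be handled carefully, and the step should rest on the proof of \cite[Lemma~3.1]{DoeNS26}. If that orthogonality is not directly available, I would fall back to density. I would prove \eqref{eq:app_mw_ibp} for piecewise smooth \(y,v\) by the classical Green formula on each \(\Omega_i\). There the interface contributions combine to exactly \(\ip{y_\parallel,\jmp{v_\tang}}\) because \(y_\parallel\) is single-valued. I would then extend by continuity in the graph norms of \(H^\imp_0(\curl)\) and \(H^\imp(\pcurl)\), using density of smooth functions in these spaces from \cite[Sec.~2.2.2]{Ass.C.L.2018.MFCE}.
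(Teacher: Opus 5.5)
Your reduction is the paper's. You replace $(y,v)$ by $(\projE y,\projH v)\in\energyspace$, use that the curls, $y_\parallel$ and $\jmp{v_\tang}$ are invariant by Lemma~\ref{lem:app_mw_hodge}, and invoke \cite[Thm.~2.2]{DoeNS26}. Your treatment of the remainder $\ip{\curl y,(I-\projH)v}$ via $(I-\projH)v=\nabla\psi$ is also fine; it is equivalent to $\mu^{-1}\curl y\in\divspaceH$.

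The gap is the other remainder, $\ip{(I-\projE)y,\pcurl v}$. This is the only place where the interface enters, and you leave it unresolved. The missing observation is that $\eps^{-1}\pcurl v\in\divspaceE$ directly. Indeed, $\pdiv(\eps\,\eps^{-1}\pcurl v)=\pdiv\pcurl v=0$ on each $\Omega_i$, and $\divspaceE$ imposes no condition on the normal jump across $\Fint$. Hence $\ip{(I-\projE)y,\pcurl v}=\ip{\eps(I-\projE)y,\eps^{-1}\pcurl v}=0$ by the definition of the orthogonal projection. This is exactly how the paper argues.

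Your attempted characterization of the complement draws the wrong consequence from the missing normal-jump condition. On each $\Omega_i$, the space $\divspaceE$ is all of $\eps^{-1}H(\div 0,\Omega_i)$ with no boundary condition. Its $\ip{\eps\cdot,\cdot}$-orthogonal complement is therefore $\widetilde\nabla\bigl(H^1_0(\Omega_1)\times H^1_0(\Omega_2)\bigr)$. So $\phi$ vanishes on $\Fint$ from both sides, not merely on $\partial\Omega$.

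With that correct description, no interface terms appear at all. The identity $\ip{\nabla\phi_i,\curl v^{(i)}}_{\Omega_i}=0$ is just the statement $\div\curl v^{(i)}=0$ tested with $\phi_i\in H^1_0(\Omega_i)$. The interface terms you worried about would only arise for $\phi$ with nonzero traces on $\Fint$, and such $\phi$ are not in the complement.

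Your density fallback would be a genuinely different proof and would bypass \cite{DoeNS26}. However, it needs density of piecewise smooth fields in $H^\imp(\pcurl)$ in a norm that controls only $\|\jmp{v_\tang}\|_{L^2(\Fint)}$, not the one-sided traces. It also needs an analogous density result in $H^\imp_0(\curl)$. Neither is established by the reference you cite, and neither is standard. The one-line membership argument above makes the fallback unnecessary.
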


\begin{proof}
    With Lemma~\ref{lem:app_mw_hodge}~(b) we obtain
    \begin{equation*}
        \ip{y,\pcurl v} 
        = \ip{y, \pcurl \projH v}
        = \ip{\eps y, \eps^{-1}\pcurl \projH v},
    \end{equation*}
    where we used \(\eps^{-1}\eps = 1\) in the latter step.
    Since \(\eps^{-1}\pcurl v \in \divspaceE\), we can introduce the projection in the first argument, and obtain
    \begin{equation*}
        \ip{y,\pcurl v} 
        = \ip{\eps \projE y, \eps^{-1}\pcurl v}
        = \ip{\projE y, \pcurl \projH v}.
    \end{equation*}
    Similarly, with Lemma~\ref{lem:app_mw_hodge}~(a) we conclude that
    \begin{equation*}
        \ip{\curl y, v} = \ip{\curl \projE y, \projH v}.
    \end{equation*}
    
    On the other hand, with Lemma~\ref{lem:app_mw_hodge}~(a) and~(b), we see that
    \begin{equation*}
        \ip{y_\parallel, \jmp{v_\tang}}_{L^2(\Fint)^2}
        =
        \ip{(\projE y)_\parallel, \jmp{(\projH v)_\tang}}_{L^2(\Fint)^2} .
    \end{equation*}
    As \((\projE y, \projH v) \in \energyspace\), the claim now follows from \cite[Thm.~2.2]{DoeNS26}.
\end{proof}

\begin{corollary}\label{cor:app_mw_ass_cont_forms}
    Assumption~\ref{ass:cont_forms} is satisfied with
    \begin{equation*}
        \fullspace = \setc{(x,\heatvar) \in \wavespaceV\times \heatspaceV}{\jmp{x_2\times\nint}_\Fint = -(c_\ell^T\otimes I_2)\heatvar},
        \qquad
        \coupopfullformext \equiv 0,
    \end{equation*}
    and \(\sbilwaveext(x,y) = \ip{\curl x_1, y_2} - \ip{\pcurl x_2, y_1}\).
\end{corollary}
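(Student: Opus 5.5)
We have to verify parts (a), (b), (c) of Assumption~\ref{ass:cont_forms} for the given $\fullspace$, $\coupopfullformext\equiv 0$ and $\sbilwaveext$. Parts (a) and (b) were already checked above. For (a), the weighted inner products are equivalent to the $L^2$ ones. For (b), $\sbilwave(y,y)=0$ holds by the definition of $\sbilwave$, and $\sbilheat(\varphi,\varphi)\ge 0$ follows from the positive semi-definiteness of the symmetric part of $\heatop$. So the work is part (c).

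First, fix the spaces: $\fullspace_1=\wavespaceV$ and $\fullspace_2=\heatspaceV$, with $\fullspace\subset\fullspace_1\times\fullspace_2$ as stated. The form $\sbilwaveext(x,y)=\ip{\curl x_1,y_2}-\ip{\pcurl x_2,y_1}$ is well defined and bounded on $\fullspace_1\times\wavespace$, since $\curl x_1,\pcurl x_2\in L^2(\Omega)^3$ for $x\in\wavespaceV$. The claim therefore reduces to the identity
\begin{equation*}
\sbilwave(x,y)+\coupopfullform(\heatvar,y)=\sbilwaveext(x,y)\qquad\text{for all }(x,\heatvar)\in\fullspace,\ y\in\wavespaceV.
\end{equation*}
Subtracting the terms $\ip{\curl x_1,y_2}$, which appear on both sides, this becomes
\begin{equation*}
\ip{\pcurl x_2,y_1}-\ip{x_2,\curl y_1}=-\ipheat\bigl(\heatvar,(c_\ell\otimes I_2)y_{1,\parallel}\bigr).
\end{equation*}

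Next, apply Lemma~\ref{lem:app_mw_ibp} with $y=y_1\in H^\imp_0(\curl)$ and $v=x_2\in H^\imp(\pcurl)$. It gives
\begin{equation*}
\ip{\pcurl x_2,y_1}-\ip{x_2,\curl y_1}=\ip{y_{1,\parallel},\jmp{x_{2,\tang}}}_{L^2(\Fint)^2}.
\end{equation*}
Now use the jump constraint in $\fullspace$, namely $\jmp{x_2\times\nint}_\Fint=-(c_\ell^T\otimes I_2)\heatvar$, and the adjoint relation
\begin{equation*}
\ip{w,(c_\ell^T\otimes I_2)\heatvar}_{L^2(\Fint)^2}=\ipheat\bigl((c_\ell\otimes I_2)w,\heatvar\bigr).
\end{equation*}
Together these turn the right-hand side into $-\ipheat(\heatvar,(c_\ell\otimes I_2)y_{1,\parallel})$, which is the required identity.

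The main obstacle is a convention check rather than analysis. The tangential jump $\jmp{v_\tang}$ in Lemma~\ref{lem:app_mw_ibp} must coincide, in orientation and sign, with $\jmp{H\times\nint}_\Fint$ as written in $\fullspace$. Likewise, the tangential trace $y_\parallel$ must be identified with the two-component vector on which $c_\ell\otimes I_2$ acts, using the same basis of the interface tangent plane. I would take the convention of \cite{DoeNS26} (as inherited through Lemma~\ref{lem:app_mw_ibp}) as the definition of $\jmp{\cdot}$ and check that it fixes the sign in \eqref{eq:app_mw_pde_jump}. If the orientation turned out reversed, the sign would be absorbed by the fixed choice of $\nint$, consistent with how the formal coupling condition was derived. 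Finally, the weights $\eps,\mu$ do not enter $\sbilwave$, $\sbilwaveext$ or $\coupopfullform$, so the computation is weight-independent.
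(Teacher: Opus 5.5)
Your proposal is correct and follows the paper's proof: both apply Lemma~\ref{lem:app_mw_ibp} with $y_1$ and $x_2$, then use the interface constraint in $\fullspace$ and the adjointness of $c_\ell^T\otimes I_2$ and $c_\ell\otimes I_2$ to identify $\sbilwave-\sbilwaveext$ with $-\coupopfullform(\heatvar,\cdot)$. The orientation check you flag needs no separate argument, because the appendix defines $\jmp{H_\tang}$ to be $\jmp{H\times\nint}_\Fint$, so the two jumps agree by definition.
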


\begin{proof}
    Parts~(a) and~(b) have been shown above.
    For \((x,\heatvar) \in \fullspace\) and \(y \in \wavespaceV\), Lemma~\ref{lem:app_mw_ibp} applied to \(y_1 \in H^\imp_0(\curl)\) and \(x_2 \in H^\imp(\pcurl)\) yields
    \begin{align*}
        \sbilwave(x,y) - \sbilwaveext(x,y)
        &= \ip{\pcurl x_2, y_1} - \ip{x_2,\curl y_1}\\
        &= \ip{ \jmp{x_2 \times \nint}_\Fint , y_{1,\parallel}}_{L^2(\Fint)^2}
        = - \coupopfullform(\heatvar, y),
    \end{align*}
    where we used the interface condition in \(\fullspace\) and \(\ip{(c_\ell^T \otimes I_2)\heatvar, y_{1,\parallel}} = \ip{\heatvar, (c_\ell\otimes I_2)y_{1,\parallel}}\) in the last step.
\end{proof}

\subsubsection{Well-posedness}
\label{app:mw:wellposedness}

We now write \eqref{eq:app_mw_pde} as an evolution equation on \(\wavespace\times\heatspace\) and define
\begin{equation}\label{eq:app_mw_operator}
    \maxop \colon \wavespace\times\heatspace \supset \fullspace \to \wavespace\times\heatspace,
    \qquad
    \maxop (x,\heatvar)
    =
    \begin{pmatrix}
        -\eps^{-1} \pcurl x_2 \\
        \mu^{-1}\curl x_1\\
        (\heatop \otimes I_2)\heatvar - (c_\ell\otimes I_2) x_{1,\parallel}
    \end{pmatrix},
\end{equation}
with \(\fullspace\) from Corollary~\ref{cor:app_mw_ass_cont_forms}, so that \eqref{eq:app_mw_pde} reads
\begin{equation}\label{eq:app_mw_evolution}
    \pt (x,\heatvar) + \maxop (x,\heatvar) = (\eps^{-1}J, 0, g),
    \qquad
    (x,\heatvar)(0) = (x_0,\heatvar_0).
\end{equation}
Note that \(\maxop\) is well-defined on \(\fullspace\).
The interface condition is meaningful since \(\jmp{x_2\times \nint}_\Fint \in L^2(\Fint)^2\) for \(x_2 \in H^\imp(\pcurl)\).

In contrast to \cite{DoeNS26}, no divergence constraints are imposed.
This allows for sources \(J\in L^2(\Omega)^3\).

We show well-posedness of the evolution equation \eqref{eq:app_mw_evolution} by means of the Lumer--Phillips theorem, see, e.g., \cite[Cor.~II.3.20]{Eng.Nag.2000}.
Thus, it remains to show that \(\maxop\) is maximal accretive.
\begin{lemma}\label{lem:app_mw_accretive}
    The operator \(\maxop\) is accretive with
    \begin{equation}\label{eq:app_mw_dissipation}
        \ipcol{\maxop(x,\heatvar)}{(x,\heatvar)}
        =
        \ip{(\heatop\otimes I_2) \heatvar,\heatvar}_{L^2(\Fint)^{2\ell}}
        \geq 0,
        \qquad (x,\heatvar) \in \fullspace.
    \end{equation}
\end{lemma}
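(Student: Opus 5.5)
We compute the pairing directly from the definitions \eqref{eq:app_mw_operator}, \eqref{eq:app_mw_forms} and then invoke the integration-by-parts formula of Lemma~\ref{lem:app_mw_ibp}. The weights cancel first. For \((x,\heatvar)\in\fullspace\) the inner product \(\ipcol{\cdot}{\cdot}\) carries the weights \(\eps\) and \(\mu\) in the first two components, and these cancel the factors \(\eps^{-1}\), \(\mu^{-1}\) in \(\maxop\). This gives
\begin{equation*}
    \ipcol{\maxop(x,\heatvar)}{(x,\heatvar)}
    = -\ip{\pcurl x_2, x_1} + \ip{\curl x_1, x_2}
    + \ipheat\bigl((\heatop\otimes I_2)\heatvar,\heatvar\bigr)
    - \ipheat\bigl((c_\ell\otimes I_2)x_{1,\parallel},\heatvar\bigr).
\end{equation*}

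Next we treat the two Maxwell terms. Since \(x_1\in H^\imp_0(\curl)\) and \(x_2\in H^\imp(\pcurl)\), Lemma~\ref{lem:app_mw_ibp} applies with \(y=x_1\) and \(v=x_2\), so that
\begin{equation*}
    -\ip{\pcurl x_2, x_1} + \ip{\curl x_1, x_2}
    = -\ip{x_{1,\parallel}, \jmp{x_{2,\tang}}}_{L^2(\Fint)^2}.
\end{equation*}
The interface condition built into \(\fullspace\) gives \(\jmp{x_2\times\nint}_\Fint = -(c_\ell^T\otimes I_2)\heatvar\). Consequently this boundary term equals \(\ip{x_{1,\parallel},(c_\ell^T\otimes I_2)\heatvar}_{L^2(\Fint)^2} = \ipheat\bigl(\heatvar,(c_\ell\otimes I_2)x_{1,\parallel}\bigr)\). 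This is the same adjoint identity that was used in the proof of Corollary~\ref{cor:app_mw_ass_cont_forms}.

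The boundary term therefore cancels exactly the coupling term \(-\ipheat((c_\ell\otimes I_2)x_{1,\parallel},\heatvar)\), and only \(\ipheat((\heatop\otimes I_2)\heatvar,\heatvar)\) remains. This proves the equality in \eqref{eq:app_mw_dissipation}. The sign follows pointwise on \(\Fint\): the quadratic form of \(\heatop\otimes I_2\) splits into two copies of \(\varphi^T\heatop\varphi\ge 0\), one for each tangential component, so it is nonnegative after integration.

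The only delicate point is sign bookkeeping. One must ensure that the orientation convention of the jump \(\jmp{\cdot}\) and of \(\nint\) in Lemma~\ref{lem:app_mw_ibp} matches the one in the interface condition, so that the boundary term cancels the coupling term rather than doubling it. We would fix this by comparing with Corollary~\ref{cor:app_mw_ass_cont_forms}, which uses the same convention and yields exactly \(-\coupopfullform(\heatvar,y)\).
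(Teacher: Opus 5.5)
Your proposal is correct and matches the paper's proof step for step: the material weights cancel, Lemma~\ref{lem:app_mw_ibp} with \(y=x_1\), \(v=x_2\) turns the Maxwell terms into \(-\ip{x_{1,\parallel},\jmp{x_{2,\tang}}}\), and the interface constraint in \(\fullspace\) together with the adjoint identity cancels the coupling term, leaving only the nonnegative \(\heatop\)-term. Your orientation concern does not arise, because the paper writes \(\jmp{H_{\tang}}=\jmp{H\times\nint}_\Fint\) for the very quantity that appears both in the integration-by-parts lemma and in the constraint defining \(\fullspace\).
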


\begin{proof}
    Let \((x,\heatvar) \in \fullspace\).
    By the definitions \eqref{eq:app_mw_forms} and \eqref{eq:app_mw_operator},
    \begin{align*}
        &\ipcol{\maxop(x,\heatvar)}{(x,\heatvar)}\\
        &\qquad\qquad=
        -\ip{\pcurl x_2, x_1} + \ip{\curl x_1, x_2}
        + \ip{(\heatop\otimes I_2)\heatvar,\heatvar}
        - \ip{(c_\ell \otimes I_2) x_{1,\parallel},\heatvar},
    \end{align*}
    where the material weights cancel.
    Lemma~\ref{lem:app_mw_ibp} and the interface condition in \(\fullspace\) give
    \begin{align*}
        -\ip{\pcurl x_2, x_1} + \ip{\curl x_1, x_2}
        &= -\ip{x_{1,\parallel},\jmp{x_{2,\tang}}}\\
        &= \ip{x_{1,\parallel}, (c_\ell^T\otimes I_2)\heatvar}
        = \ip{(c_\ell\otimes I_2)x_{1,\parallel}, \heatvar},
    \end{align*}
    so that the two coupling terms cancel and \eqref{eq:app_mw_dissipation} follows.
\end{proof}

It remains to prove that \(\maxop\) is maximal.
We first record the modification of \cite[Lem.~2.4]{DoeNS26} that is needed, namely an inhomogeneous impedance condition on the interface.
\begin{lemma}\label{lem:app_mw_resolvent_interface}
    Let \(\sigma > 0\), \(\kappa \in L^2(\Fint)^2\) and \((F,G) \in \divspaceE\times\divspaceH\).
    Then there exists a unique \((E,H) \in \energyspace\) with
    \begin{equation}\label{eq:app_mw_resolvent_interface}
        E - \eps^{-1}\pcurl H = F,
        \qquad
        H + \mu^{-1}\curl E = G,
        \qquad
        \jmp{H_\tang} = -\sigma \wavevarOneparE - \kappa .
    \end{equation}
\end{lemma}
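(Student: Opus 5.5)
Our plan is to reduce the interface problem \eqref{eq:app_mw_resolvent_interface} to a coercive variational problem on the energy space \(\energyspace\), in the spirit of \cite[Lem.~2.4]{DoeNS26}, and to apply the Lax--Milgram lemma; the inhomogeneity \(\kappa\) then only enters as an additional bounded functional on the interface.

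\emph{Step 1 (reduction to \(E\)).} Formally, the second equation gives \(H = G - \mu^{-1}\curl E\). Inserting this into the first equation and testing with \(\eps\)-weighted \(\varphi\), we use the integration-by-parts formula of Lemma~\ref{lem:app_mw_ibp}. Together with the interface condition \(\jmp{H_\tang} = -\sigma E_\parallel - \kappa\), this motivates the form
\begin{equation*}
 a(E,\varphi) = \ip{\eps E,\varphi} + \ip{\mu^{-1}\curl E,\curl\varphi} + \sigma\ip{E_\parallel,\varphi_\parallel}_{L^2(\Fint)^2}
\end{equation*}
on \(V = H^\imp_0(\curl)\cap\divspaceE\), equipped with the graph norm including the \(L^2(\Fint)\) trace. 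The right-hand side is
\begin{equation*}
 \ell(\varphi) = \ip{\eps F,\varphi} + \ip{G,\curl\varphi} - \ip{\kappa,\varphi_\parallel}_{L^2(\Fint)^2}.
\end{equation*}
The signs should be fixed by running the identity of Lemma~\ref{lem:app_mw_ibp} with \(v=H\). Since \(\sigma>0\), the form \(a\) is bounded and coercive on \(V\) with respect to exactly this norm, and \(\ell\) is bounded because \(\kappa\in L^2(\Fint)^2\). Lax--Milgram therefore yields a unique \(E\in V\).

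\emph{Step 2 (recovering \(H\) and the equations).} We define \(H = G - \mu^{-1}\curl E\). Since \(G\in\divspaceH\) and \(\mu^{-1}\curl E\) is \(\mu\)-divergence free with vanishing normal trace (as \(E\times\nu=0\)), we get \(H\in\divspaceH\). Testing the variational equation with \(\varphi\in C_c^\infty(\Omega_i)^3\) that are \(\eps\)-divergence free, and using the Hodge projection \(\projE\) of Lemma~\ref{lem:app_mw_hodge} to pass to arbitrary test functions in \(H^\imp_0(\curl)\), we obtain \(\pcurl H\in L^2\) and \(E - \eps^{-1}\pcurl H = F\) in each \(\Omega_i\). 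This uses that \(F\in\divspaceE\), so that the gradient part of the test function is harmless. Next we test with general \(\varphi\in H^\imp_0(\curl)\) and compare with Lemma~\ref{lem:app_mw_ibp}. This identifies \(\jmp{H_\tang}\) as a functional on the tangential traces \(\varphi_\parallel\), namely \(\jmp{H_\tang} = -\sigma E_\parallel - \kappa\), which lies in \(L^2(\Fint)^2\). Hence \(H\in H^\imp(\pcurl)\) and \((E,H)\in\energyspace\).

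\emph{Step 3 (uniqueness).} For the difference of two solutions with data zero, we test the first equation with \(\eps E\) and the second with \(\mu H\), and use Lemma~\ref{lem:app_mw_ibp}. This gives \(\|E\|^2 + \|H\|^2 + \sigma\|E_\parallel\|^2 = 0\), so the difference vanishes.

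The main obstacle is Step 2, specifically the density argument. The tangential traces \(\varphi_\parallel\) of test functions in \(V\) must be rich enough (dense in \(L^2(\Fint)^2\)) to identify the jump in \(L^2\), and the restriction to divergence-free test functions must not lose information. For the latter we would rely on Lemma~\ref{lem:app_mw_hodge}(a): \(\projE\) preserves both \(\curl\) and the trace, so the variational identity extends from \(V\) to all of \(H^\imp_0(\curl)\) up to terms \(\ip{\eps F, \varphi-\projE\varphi}\). These terms vanish by the orthogonality of \(\projE\), since \(F\in\divspaceE\).
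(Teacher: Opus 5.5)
Your proposal is correct. It is consistent with the paper, which omits the proof and only defers to \cite[Lem.~2.4]{DoeNS26}, with the inhomogeneity $\kappa$ as the sole modification. Your Lax--Milgram argument for $E$ on $H^\imp_0(\curl)\cap\divspaceE$, where $\kappa$ enters only through the bounded functional $-\ip{\kappa,\varphi_\parallel}_{L^2(\Fint)^2}$, is precisely that argument carried out.

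The one step to phrase carefully is Step~2. Lemma~\ref{lem:app_mw_ibp} presupposes $\jmp{H_\tang}\in L^2(\Fint)^2$, so you must first identify the tangential jump distributionally by testing with $\varphi\in C_c^\infty(\Omega)^3$; their tangential traces exhaust the smooth compactly supported tangential fields on $\Fint$. Only then can you read off $\jmp{H_\tang}=-\sigma E_\parallel-\kappa\in L^2(\Fint)^2$. Also, the projection step that extends the identity from $V$ to all of $H^\imp_0(\curl)$ uses $E\in\divspaceE$ to drop $\ip{\eps E,\varphi-\projE\varphi}$, in addition to $F\in\divspaceE$.
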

The proof follows along the lines of \cite[Lem.~2.4]{DoeNS26}, and we omit the details here.
We can now prove that \(\maxop\) is maximal.
\begin{lemma}\label{lem:app_mw_maximal}
    The operator \(I + \maxop \colon \fullspace \to \wavespace\times\heatspace\) is bijective.
\end{lemma}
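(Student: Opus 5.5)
Injectivity of \(I+\maxop\) is immediate from Lemma~\ref{lem:app_mw_accretive}: if \((I+\maxop)(x,\heatvar)=0\), testing with \((x,\heatvar)\) gives \(\normcol{(x,\heatvar)}^2\leq 0\). Thus the work is surjectivity.

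Fix \((F_0,G_0,r)\in\wavespace\times\heatspace\) and seek \((x,\heatvar)\in\fullspace\) with \((I+\maxop)(x,\heatvar)=(F_0,G_0,r)\). I would first eliminate the interface variable. The third equation reads \((I+\heatop\otimes I_2)\heatvar=(c_\ell\otimes I_2)x_{1,\parallel}+r\). Since \(\heatop\) has positive semidefinite symmetric part, the matrix \(I+\heatop\) is invertible, with \(M:=(I+\heatop)^{-1}\) satisfying \(\varphi^T M\varphi\geq \alpha|\varphi|^2\) for some \(\alpha>0\) (indeed \(\psi^T(I+\heatop)\psi\geq|\psi|^2\) and \(|\psi|\gtrsim|\varphi|\) for \(\psi=M\varphi\)). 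Hence \(\heatvar=(M\otimes I_2)\bigl((c_\ell\otimes I_2)x_{1,\parallel}+r\bigr)\). The interface condition in \(\fullspace\) then becomes
\[
\jmp{x_{2,\tang}}=-\sigma\,x_{1,\parallel}-\kappa,\qquad
\sigma=c_\ell^TMc_\ell\geq\alpha|c_\ell|^2>0,\quad
\kappa=(c_\ell^TM\otimes I_2)r\in L^2(\Fint)^2,
\]
using \(c_\ell\neq0\). This is exactly the inhomogeneous impedance condition of Lemma~\ref{lem:app_mw_resolvent_interface}.

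The remaining task is the bulk system \(x_1-\eps^{-1}\pcurl x_2=F_0\), \(x_2+\mu^{-1}\curl x_1=G_0\) with general \((F_0,G_0)\), whereas Lemma~\ref{lem:app_mw_resolvent_interface} only accepts divergence-free data. I would split off the gradient parts using the Hodge projections. Write \(F_0=\projE F_0+(I-\projE)F_0\) and likewise \(G_0=\projH G_0+(I-\projH)G_0\). The orthogonal complements consist of (weighted) gradients. For \(\divspaceH\) these are \(\nabla\psi\) with \(\psi\in H^1\); for \(\divspaceE\), where only the piecewise divergence is constrained, they are \(\eps^{-1}\nabla\phi\)-type fields with \(\phi\) in \(H_0^1\) (up to the constraint pattern on \(\Fint\)). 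Such gradients are curl-free, have vanishing tangential traces of the relevant kind, and \(\pcurl\) annihilates them. Hence the ansatz \(x_1=E+(I-\projE)F_0\), \(x_2=H+(I-\projH)G_0\) reduces the problem to Lemma~\ref{lem:app_mw_resolvent_interface} with data \((\projE F_0,\projH G_0)\). Two adjustments are needed. The tangential trace of \((I-\projE)F_0\) on \(\Fint\), which is nonzero in general, is absorbed into a modified \(\kappa\). The term \(\jmp{((I-\projH)G_0)_\tang}\) also enters \(\kappa\). One must then check the result lies in \(H^\imp_0(\curl)\times H^\imp(\pcurl)\).

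The main obstacle is this gradient-part bookkeeping. For merely \(L^2\) data, the gradient components need not have \(L^2(\Fint)\) tangential traces, so the naive splitting may leave the space. If that happens, I would instead treat the gradient part directly: solve an elliptic interface problem for \(\phi\), obtained by taking divergences of the first equation and recognizing that the normal jump of \(\eps x_1\) is free. Alternatively, I would apply a density argument: show \(\mathrm{Ran}(I+\maxop)\) is closed, which follows from \(\|(I+\maxop)u\|\geq\|u\|\) by accretivity, together with closedness of \(\maxop\) via Lemma~\ref{lem:app_mw_ibp}. Then show it contains a dense set of data, such as smooth \(F_0\), for which the traces exist and the construction above works. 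Finally I would assemble \((x,\heatvar)\), verify \((x,\heatvar)\in\fullspace\), and check all three equations.
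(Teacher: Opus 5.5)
\textbf{There is a genuine gap at the step you yourself flag as the main obstacle.}

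Your architecture is the paper's: injectivity from Lemma~\ref{lem:app_mw_accretive}, elimination of $\heatvar$ via $(I_\ell+\heatop)^{-1}$ giving $\sigmaeff>0$ and $\kappa$, then the splitting $F_{\mathrm R}=(I-\projE)F$, $G_{\mathrm R}=(I-\projH)G$. After that you apply Lemma~\ref{lem:app_mw_resolvent_interface} to the divergence-free data and use the ansatz $x=(E^0+F_{\mathrm R},H^0+G_{\mathrm R})$.

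The gap is that you never show this $x$ lies in $\wavespaceV$ and satisfies the interface condition of $\fullspace$. Moreover, the claim your bookkeeping rests on, that $(F_{\mathrm R})_\parallel$ is nonzero in general, is false. If it were true, your remedy would also fail. For $L^2$ data the tangential trace of a gradient $\nabla\phi$, $\phi\in H^1$, is in general only of $H^{-1/2}$-type. The modified datum $\kappa+\sigmaeff(F_{\mathrm R})_\parallel+\jmp{(G_{\mathrm R})_\tang}$ would then not lie in $L^2(\Fint)^2$, and Lemma~\ref{lem:app_mw_resolvent_interface} would not apply. Your fallbacks are only sketched, and the density variant still needs the same trace information for the dense class of data.

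The missing observation is that the gradient parts are invisible at the interface:
\[
\curl F_{\mathrm R}=0,\quad (F_{\mathrm R})_\parallel=0,\qquad \pcurl G_{\mathrm R}=0,\quad \jmp{(G_{\mathrm R})_\tang}=0 .
\]
The paper gets this from Lemma~\ref{lem:app_mw_hodge} plus an approximation argument:
\begin{itemize}
\item For $y\in H^\imp_0(\curl)$, part (a) shows that $(I-\projE)y$ is curl-free with vanishing trace on $\Fint$.
\item Such fields form a closed subspace of $H^\imp_0(\curl)$ on which the graph norm equals the $L^2$-norm, so this subspace is closed in $L^2$.
\item Since $H^\imp_0(\curl)$ is dense in $L^2(\Omega)^3$ and $I-\projE$ is $L^2$-bounded, the property passes to $(I-\projE)F$ for every $F$. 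Part (b) gives the same for $G$.
\end{itemize}

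Equivalently, you can identify the complements explicitly. $\divspaceE$ constrains $\div(\eps V)$ only inside each $\Omega_i$, so its $\eps$-orthogonal complement is $\nabla\bigl(H^1_0(\Omega_1)\oplus H^1_0(\Omega_2)\bigr)$. These are plain gradients (not $\eps^{-1}\nabla\phi$) of potentials that vanish on $\Fint$. The free normal jump in $\divspaceE$ pins the potential to zero on the interface, which is the opposite of what you assumed.

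$\divspaceH$ constrains $\div(\mu V)$ on all of $\Omega$ together with $\mu V\cdot\nu=0$, so its complement is $\nabla H^1(\Omega)$ with potentials continuous across $\Fint$. The one-sided tangential traces are only $H^{-1/2}$, but their jump vanishes, and $H^\imp(\pcurl)$ only requires the jump.

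Hence $x_{1,\parallel}=E^0_\parallel$ and $\jmp{x_{2,\tang}}=\jmp{H^0_\tang}$. Lemma~\ref{lem:app_mw_resolvent_interface} with $\sigma=\sigmaeff$ and the unmodified $\kappa$ then closes the argument directly. No elliptic interface problem, regularity, or closed-range/density argument is needed.
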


\begin{proof}
    Let \((F,G,\psi) \in \wavespace\times\heatspace\).

    Since \(\varphi^T(I_\ell + \heatop)\varphi \geq |\varphi|^2\) for all \(\varphi \in \R^\ell\), the matrix \(I_\ell + \heatop\) is invertible, and
    \(v = (I_\ell+\heatop)^{-1}c_\ell \neq 0\) satisfies
    \(c_\ell^T v = v^T(I_\ell + \heatop)v > 0\).
    We therefore set
    \begin{equation}\label{eq:app_mw_sigma_eff}
        \sigmaeff = c_\ell^T (I_\ell + \heatop)^{-1} c_\ell > 0,
        \qquad
        \kappa = \bigl( c_\ell^T (I_\ell+\heatop)^{-1}\otimes I_2\bigr)\psi \in L^2(\Fint)^2 .
    \end{equation}

    We decompose \(F = \projE F + F_{\mathrm{R}}\) and \(G = \projH G + G_{\mathrm{R}}\).
    Let \((E^0,H^0) \in \energyspace\) be the solution of \eqref{eq:app_mw_resolvent_interface} with data \((\projE F,\projH G)\), \(\sigma = \sigmaeff\) and \(\kappa\).
    We set
    \begin{equation*}
        x = (E^0 + F_{\mathrm{R}},\, H^0 + G_{\mathrm{R}}),
        \qquad
        \heatvar = \bigl((I_\ell + \heatop)^{-1}\otimes I_2\bigr)\bigl(\psi + (c_\ell\otimes I_2) x_{1,\parallel}\bigr) .
    \end{equation*}
    Lemma~\ref{lem:app_mw_hodge} shows together with an  approximation argument that
    \begin{equation*}
        \curl F_{\mathrm{R}} = 0,
        \quad (F_{\mathrm{R}})_\parallel = 0,
        \qquad
        \pcurl G_{\mathrm{R}} = 0,
        \quad
        \jmp{(G_{\mathrm{R}})_\tang} = 0.
    \end{equation*}
    Hence, \(x \in \wavespaceV\) since
    \begin{equation*}
        \curl x_1 = \curl E^0,
        \quad
        x_{1,\parallel} = E^0_\parallel,
        \qquad
        \pcurl x_2 = \pcurl H^0,
        \quad
        \jmp{x_{2,\tang}} = \jmp{H^0_\tang} .
    \end{equation*}
    Consequently, the third component of \eqref{eq:app_mw_resolvent_interface} and \eqref{eq:app_mw_sigma_eff} yield
    \begin{equation*}
        -(c_\ell^T\otimes I_2)\heatvar
        = -\kappa - \sigmaeff x_{1,\parallel}
        = \jmp{H^0_\tang}
        = \jmp{x_{2,\tang}},
    \end{equation*}
    i.e.\ \((x,\heatvar) \in \fullspace\).
    Moreover, the first two components of \eqref{eq:app_mw_resolvent_interface} give
    \begin{align*}
        x_1 - \eps^{-1}\pcurl x_2 
            &= E^0 - \eps^{-1}\pcurl H^0 + F_{\mathrm{R}}
            = \projE F + F_{\mathrm{R}} 
            = F,\\
        x_2 + \mu^{-1}\curl x_1 
            &= H^0 + \mu^{-1}\curl E^0 + G_{\mathrm{R}} 
            = \projH G + G_{\mathrm{R}} 
            = G,
    \end{align*}
    and the definition of \(\heatvar\) gives \(\heatvar + (\heatop\otimes I_2)\heatvar - (c_\ell\otimes I_2)x_{1,\parallel} = \psi\).
    Hence \((I+\maxop)(x,\heatvar) = (F,G,\psi)\).
    Injectivity follows from Lemma~\ref{lem:app_mw_accretive}, since \eqref{eq:app_mw_dissipation} implies
    \(\normcol{(I+\maxop)(x,\heatvar)} \geq \normcol{(x,\heatvar)}\).
\end{proof}

The Lumer--Phillips theorem, see, e.g., \cite[Cor.~II.3.20]{Eng.Nag.2000}, therefore yields the following result.
\begin{theorem}\label{thm:app_mw_wellposedness}
    Let \(\eps,\mu\) be as in \eqref{eq:app_mw_materials}, let \(\heatop\in\R^{\ell\times\ell}\) have a positive semi-definite symmetric part, and let \(c_\ell \in \monosetc{0,1}^\ell\setminus\monosetc{0}\).
    Then \(-\maxop\) generates a \(C_0\)-semigroup of contractions on \(\wavespace\times\heatspace\).
    In particular, for all
    \begin{equation*}
        (x_0,\heatvar_0) \in \fullspace,
        \qquad
        J \in C^1([0,T], L^2(\Omega)^3),
        \qquad
        g \in C^1([0,T],\heatspace),
    \end{equation*}
    problem \eqref{eq:app_mw_evolution} possesses a unique classical solution
    \begin{equation*}
        (x,\heatvar) \in C^1\bigl([0,T],\wavespace\times\heatspace\bigr)
        \cap C\bigl([0,T],\fullspace\bigr),
    \end{equation*}
    which satisfies \eqref{eq:full_system}.
\end{theorem}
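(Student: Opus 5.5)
The plan is to obtain Theorem~\ref{thm:app_mw_wellposedness} from the Lumer--Phillips theorem \cite[Cor.~II.3.20]{Eng.Nag.2000} and standard inhomogeneous semigroup theory, and then to check that the resulting classical solution is a solution of the weak formulation \eqref{eq:full_system}.

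\emph{Generation of a contraction semigroup.} Lemma~\ref{lem:app_mw_accretive} shows that \(\maxop\) is accretive on \(\wavespace\times\heatspace\) with respect to the weighted inner product \(\ipcol{\cdot}{\cdot}\). Lemma~\ref{lem:app_mw_maximal} shows that \(I+\maxop\) is onto, so \(\maxop\) is maximal accretive. It remains to see that \(\fullspace\) is dense in \(\wavespace\times\heatspace\).
\begin{itemize}
\item I would not prove density directly. On a Hilbert space, maximal accretivity already implies dense domain: if \(z\perp\fullspace\), write \(z=(I+\maxop)w\) and test with \(w\in\fullspace\). This gives
\[
0=\ipcol{z}{w}=\normcol{w}^2+\ipcol{\maxop w}{w}\ge \normcol{w}^2,
\]
so \(w=0\) and hence \(z=0\).
\item Alternatively, density follows from \(C_c^\infty(\Omega_1\cup\Omega_2)^6\times L^2(\Fint)^{2\ell}\subset\fullspace\) after a correction of the trace. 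This is the one technical point, and the Hilbert space argument avoids it.
\end{itemize}
Lumer--Phillips then yields that \(-\maxop\) generates a \(C_0\)-semigroup of contractions.

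\emph{Classical solutions.} The data are \((x_0,\heatvar_0)\in\fullspace=D(\maxop)\) and a right-hand side \((\eps^{-1}J,0,g)\in C^1([0,T],\wavespace\times\heatspace)\). By the standard result on inhomogeneous Cauchy problems (e.g.\ Pazy, Cor.~4.2.5), the mild solution given by the variation of constants formula is a classical solution. It lies in \(C^1([0,T],\wavespace\times\heatspace)\) and, with the graph norm on \(D(\maxop)\), in \(C([0,T],\fullspace)\). Uniqueness follows from the contractivity estimate applied to the difference of two solutions.

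\emph{Weak formulation.} Finally I would test \eqref{eq:app_mw_evolution} against \(y\in\wavespaceV\) and \(\varphi\in\heatspaceV\).
\begin{itemize}
\item For the heat component, the third component of \(\maxop\) gives \eqref{eq:full_system_heat} directly, using the form \(\sbilheat\) and \(\coupopfullform(\varphi,x)=\ipheat(\varphi,(c_\ell\otimes I_2)x_{1,\parallel})\).
\item For the wave component, the weighted pairing gives
\[
-\ip{\pcurl x_2,y_1}+\ip{\curl x_1,y_2}=\sbilwaveext(x,y).
\]
Since \((x,\heatvar)\in\fullspace\), Corollary~\ref{cor:app_mw_ass_cont_forms} (that is, Lemma~\ref{lem:app_mw_ibp} together with the interface condition) converts this into \(\sbilwave(x,y)+\coupopfullform(\heatvar,y)\). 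This yields \eqref{eq:full_system_wave} with \(\ipwave(f,y)=\ip{J,y_1}\).
\end{itemize}

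The step I expect to need the most care is the precise regularity statement: that \(C([0,T],\fullspace)\) is meant with the graph norm, and that this embeds into the \(C([0,T],\wavespaceV)\) setting of \eqref{eq:full_system}. This holds because the graph norm of \(\maxop\) controls \(\curl x_1\), \(\pcurl x_2\) and the interface traces.
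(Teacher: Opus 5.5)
Your proposal is correct and follows the paper's proof: you apply Lumer--Phillips using Lemma~\ref{lem:app_mw_accretive} and Lemma~\ref{lem:app_mw_maximal}, then invoke the standard Pazy result for \(C^1\) right-hand sides and initial values in \(\fullspace\). Your density argument reproves what the reflexive-space version of Lumer--Phillips cited in the paper (\cite[Cor.~II.3.20]{Eng.Nag.2000}) gives automatically, and your check of \eqref{eq:full_system} via Corollary~\ref{cor:app_mw_ass_cont_forms}, together with the graph-norm control of the \(\wavespaceV\)-norm (which uses \(c_\ell\neq 0\)), spells out steps the paper only asserts.
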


\begin{proof}
    The first statement follows from Lemma~\ref{lem:app_mw_accretive} and Lemma~\ref{lem:app_mw_maximal}.
    A standard convolution argument then yields a classical solution for the inhomogeneous Cauchy problem, cf. \cite[Thm.~2.4]{Paz.1983.SLOA}.
\end{proof}

\end{document}